\numberwithin{equation}{section}
\def\titlerunning#1{\gdef\titrun{#1}}
\def\author#1{\gdef\autrun{\def\and{\unskip, }#1}\gdef\@author{#1}}
\def\address#1{{\def\and{\\\hspace*{18pt}}\renewcommand{\thefootnote}{}%
		\footnote {#1}}%
	\markboth{\autrun}{\titrun}}
\def\email#1{e-mail: #1}
\def\subjclass#1{{\renewcommand{\thefootnote}{}%
		\footnote{\emph{Mathematics Subject Classification (2010):} #1}}}
\def\keywords#1{\par\medskip
	\noindent\textbf{Keywords.} #1}
\theoremstyle{plain}
\newtheorem{Thm}{Theorem}[section]
\newtheorem{Lem}[Thm]{Lemma}
\newtheorem{Cor}[Thm]{Corollary}
\newtheorem{Prop}[Thm]{Proposition}
\newtheorem*{Thm*}{Theorem}
\newtheorem*{claim*}{Claim}
\newtheorem*{Cor*}{Corollary}
\newtheorem*{Ques*}{Question}
\newtheorem*{Prob*}{Problem}
\newtheorem*{OProb*}{Open Problem}
\theoremstyle{definition}
\newtheorem{Def}[Thm]{Definition}
\newtheorem*{Def*}{Definition}
\newtheorem{Rem}[Thm]{Remark}
\newtheorem{Ex}[Thm]{Example}
\DeclareMathOperator{\real}{Re}
\DeclareMathOperator{\spann}{span}
\DeclareMathOperator{\vol}{vol}
\DeclareMathOperator{\Spin}{Spin}
\DeclareMathOperator{\SO}{SO}
\DeclareMathOperator{\scal}{Scal}
\DeclareMathOperator{\ricc}{Ric}
\DeclareMathOperator{\rank}{rank}
\newcommand{\equ}{equation}
\newcommand{\C}{\mathbb{C}}
\newcommand{\N}{\mathbb{N}}
\newcommand{\R}{\mathbb{R}}
\newcommand{\Z}{\mathbb{Z}}
\newcommand{\bH}{\mathbb{H}}
\newcommand\ca{\mathcal{A}}
\newcommand\cc{\mathcal{C}}
\newcommand\ce{\mathcal{E}}
\newcommand\cl{\mathcal{L}}
\newcommand\cm{\mathcal{M}}
\newcommand\co{\mathcal{O}}
\newcommand\cq{\mathcal{Q}}
\newcommand\cs{\mathcal{S}}
\newcommand\cu{\mathcal{U}}
\newcommand{\inp}[2]{\left\langle#1,#2\right\rangle}
\newcommand{\normm}[1]{{\left\vert\kern-0.25ex\left\vert\kern-0.25ex\left\vert #1 
		\right\vert\kern-0.25ex\right\vert\kern-0.25ex\right\vert}}
\def\mbs{\mathbb{S}}
\def\msa{\mathscr{A}}
\def\msf{\mathscr{F}}
\def\msl{\mathscr{L}}
\def\msp{\mathscr{P}}
\def\mfm{\mathfrak{M}}
\def\mff{\mathfrak{F}}
\def\ig{\textit{g}}
\def\ov{\overline}
\def\pa {\partial}
\def\op{\oplus}
\def\De{\Delta}
\def\ka{\kappa}
\def\al{\alpha}
\def\bt{\beta}
\def\de{\delta}
\def\Ga{\Gamma}
\def\ga{\gamma}
\def\lm{\lambda}
\def\om{\omega}
\def\sa{\sigma}
\def\vr{\varepsilon}
\def\va{\varphi}
\begin{document}
	
	\titlerunning{Conformal deformation of a Riemannian metric via Einstein-Dirac equation}
	
	
	\title{Conformal deformation of a Riemannian metric via an Einstein-Dirac parabolic flow}
	
	\author{Yannick Sire \quad  Tian Xu}
	
	\date{}
	
	\maketitle
	
	\address{
		Y. Sire: Department of Mathematics, Johns Hopkins University, 3400 N. Charles Street, Baltimore, Maryland 21218;
		\email{ysire1@jhu.edu}
		\and
		T. Xu: Department of Mathematics, Zhejiang Normal University, Jinhua, Zhejiang, 321004, China; \email{xutian@amss.ac.cn} 
	}
	
	\subjclass{Primary 53C27; Secondary 35R01}
	
	\begin{abstract}
		
	We introduce a new parabolic flow deforming any Riemannian metric on a spin manifold by following a constrained gradient flow of the total scalar curvature. This flow is built out of the well-known Dirac-Einstein functional. We prove local well-posedness of smooth solutions. The present contribution is the first installment of more general program on the Einstein-Dirac problem.

		\vspace{.5cm}
		\keywords{Dirac operator; Einstein-Dirac problem; Parabolic flow.}
	\end{abstract}
	
	\tableofcontents
	
	\section{Introduction}
	
	The conformally invariant variational problems that arise in geometry and theoretical physics often exhibit rich and subtle mathematical structures. Uncovering and utilizing these structures to explore new phenomena can lead to some of the most challenging and interesting problems in geometric analysis. One of the most famous examples in this context is the Yamabe problem on a closed Riemannian manifold $(M,\ig)$, $\dim M\geq3$, which aims at finding a metric $\tilde\ig$ in the given conformal class $[\ig]:=\{e^{2f}\ig:\, f\in C^\infty(M)\}$ such that the scalar curvature $\scal_{\tilde\ig}$ is constant (this problem has been settled by a series of works of H. Yamabe, N. Trudinger, T. Aubin and R. Schoen, see for instance \cite{Aubin, Schoen, Trudinger, Yamabe} and \cite{LeeParker} for a good overview).  The major steps in the resolution of the Yamabe problem have deep links with variational methods, elliptic partial differential equations, and general relativity. 
	
    Let us mention here that there exists a parabolic proof of the Yamabe problem, which is somehow more desirable, as it focuses on an evolution equation that deforms any Riemannian metric conformally to a constant scalar curvature one. In his seminal paper \cite{Hamilton}, R.S. Hamilton has introduced the so-called Yamabe flow which is given by the following equation
    \[
    \pa_t\ig(t)=-(\scal_{\ig(t)}-\ov\scal_{\ig(t)})\ig(t), \quad \ig(0)=\ig_0,
    \]
    where $\ig_0$ stands for the initial metric and $\ov\scal_{\ig(t)}=\int_M\scal_{\ig(t)}d\vol_{\ig(t)}/\int_Md\vol_{\ig(t)}$ is the average scalar curvature of the evolved metric $\ig(t)$. And this gave rise to an extensive literature, see for instance \cite{Brendle05, Brendle07, Chow, SchSt, Ye}.
    
    In the context of spin geometry, spinor bundles are an important tool in differential geometry as well as in mathematical physics, where they model fermionic particles. A nonlinear coupling of the gravitational field with an eigenspinor of the Dirac operator via the energy-momentum tensor gives rise to an interesting problem, which is to find a Riemannian structure admitting an eigenspinor such that its energy-momentum tensor satisfies the Einstein equation. More precisely, let $M$ be a closed manifold of dimension $m\geq3$ and equipped with a {\it topological spin structure} on $M$ (the term ``topological" refers to the fact that the definition does not depend on a metric; however, we remark here that it does need a differentiable structure on $M$, not only a topological one, see \cite{BGM} for more details on this). For a fixed real constant $\lm\neq0$, the starting point is the action functional
    \begin{\equ}\label{ED-F0}
    W(\ig, \psi):=\int_M \Big[ \scal_{\ig}+(D_\ig\psi,\psi)_\ig-\lm|\psi|_\ig^2 \Big] d\vol_\ig
    \end{\equ}
    describing the interaction of a spinor field $\psi$ and a gravitational metric $\ig$,  whose volume element is denoted by $d\vol_\ig$. We use $D_\ig$ for the Dirac operator derived from the metric $\ig$ and $(\cdot,\cdot)_\ig$ for the hermitian scalar product on the spinor bundle $\mbs(M,\ig)$. The Euler-Lagrange equations are the Einstein and the Dirac equation
    \begin{\equ}\label{ED0}
    \ricc_\ig-\frac12\scal_\ig \ig = \frac14T_{(\ig,\psi)}, \quad D_\ig\psi = \lm\psi,
    \end{\equ}
    where the energy-momentum tensor $T_{(\ig,\psi)}$ is given by the formula
    \[
    T_{(\ig,\psi)}(X,Y)=\real(X\cdot\nabla_Y^\ig\psi+Y\cdot\nabla_X^\ig\psi,\psi)_\ig \quad \forall X, Y\in TM
    \]
    with ``$\cdot$" being the Clifford multiplication and $\nabla^\ig$ being the metric connection on $\mbs(M,\ig)$. The coupled system \eqref{ED0}, henceforth called the {\it Einstein-Dirac equation}, has been considered in physics for a long time in dimension $4$ and Lorentzian signature, as it describes the interaction for a particle of spin $\frac12$ with the gravitation field (cf. \cite{BW1957, FSY}). See also  \cite{KF00, Friedrich-ED00, Belgun} for some studies on the Riemannian spin  $3$-manifolds.
    
    By restricting the functional \eqref{ED-F0} on a given conformal class $[\ig]$ and using the conformal covariance of the Dirac operator (cf. \cite{Hijazi, Hitchin})
    \begin{\equ}\label{conformal-Dirac}
    D_{\ig}\psi=e^{\frac{m+1}2f}D_{e^{2f}\ig}\big( e^{-\frac{m-1}2f}\psi \big),
    \end{\equ}
    we obtain a functional $\Phi$ given by
    \[
    \Phi(u,\psi):=W\big(u^{\frac4{m-2}}\ig, u^{-\frac{m-1}{m-2}}\psi\big)=\int_M\Big[ uL_\ig u + (D_\ig \psi,\psi)_\ig-\lm  u^{\frac2{m-2}}|\psi|_\ig^2 \Big]d\vol_{\ig}
    \]
    for a positive function $u$ and a spinor field $\psi$,
    where $L_\ig=-\frac{4(m-1)}{m-2}\De_\ig+\scal_{\ig}$ is the conformal Yamabe operator. The corresponding Euler-Lagrange equations are
    \begin{\equ}\label{ED-1}
    	\left\{
    	\aligned
    	&L_\ig u = \frac{\lm}{m-2}|\psi|_{\ig}^2 u^{\frac{4-m}{m-2}}, \\
    	&D_\ig\psi = \lm u^{\frac2{m-2}}\psi ,
    	\endaligned
    	\right. \quad u>0
    \end{\equ}
    which is conformally invariant. Such a coupled system has drawn much attention in recent years, see for instance \cite{MM19, BM21} and references therein, where the regularity theorems,  Fredholm theorems and bubbling phenomenon have been investigated in dimension $3$. 
    
    Let us point out here that one can use the simple scaling $v=\mu u$ and $\va=\tau\psi$, with $\mu,\tau>0$, to transform \eqref{ED-1} equivalently to
    \[
    \left\{
    \aligned
    &L_\ig v = \frac{\lm}{m-2}\tau^{-2}\mu^{\frac{2m-6}{m-2}}|\va|_{\ig}^2 v^{\frac{4-m}{m-2}}, \\
    &D_\ig\va = \lm \mu^{-\frac2{m-2}}v^{\frac2{m-2}}\va ,
    \endaligned
    \right. \quad v>0.
    \]
    And hence, by suitably choosing $\mu$ and $\tau$, we can see that the specific value of $\lm$ in Eq. \eqref{ED-1} is rather irrelevant. On the other hand, since the Yamabe operator is involved, one can expect that the sign of $\lm$ would play a role in the analysis of Eq. \eqref{ED-1}, say for instance $\lm>0$ (or $\lm<0$) represents the case of positive (or negative) scalar curvature. The existence problem for the Einstein-Dirac equation in the form of \eqref{ED-1} remains largely unexplored. One of the goals of the present work is specifically to design a parabolic approach to the existence issue for \eqref{ED-1}. Additionally, we want to propose and develop a parabolic approach to the conformal deformation of Riemannian metrics based on the previous Einstein-Dirac equation. Since the first equation in  \eqref{ED-1} is elliptic, the naive approach to make it parabolic by flowing both equations does not provide a suitable parabolic PDE. Indeed, some standard computations to evolve geometric quantities under the flow do not lead to anything useful. On the other hand, the  second equation in \eqref{ED-1} can be viewed as a generalized eigenvalue problem for the Dirac operator $D_\ig$. Using  the link with the Yamabe flow which is the gradient flow of the total scalar curvature, we couple the evolution of scalar curvature via Yamabe flow, {\sl under the generalized eigenvalue equation for the spinor interpreted  as a constraint for the flow.}  It is then natural to relate the flow with a conformal evolution of the Riemannian metric since the Yamabe operator and the scalar curvature are involved, just as in the Yamabe problem. And thus, we introduce the following flow for the Einstein-Dirac equation (which will be formally introduced in Subsection \ref{subsec:set-up-of-the-problem}):
    \begin{\equ}\label{the-flow0}
    \frac{\pa u^{\frac{m+2}{m-2}}}{\pa t}=-\frac{m+2}{m-2}\bigg[ L_{\ig}u-\bigg(\frac{\int_M uL_{\ig}u\,d\vol_{\ig}}{\int_M u^{\frac2{m-2}}|\psi_u|_\ig^2\,d\vol_{\ig}}\bigg) |\psi_u|_{\ig}^2 u^{\frac{4-m}{m-2}} \bigg].
    \end{\equ}
    where $\psi_u$ stands for the generalized Dirac-eigenspinor along the flow (the corresponding eigenvalue, which is not directly appearing in \eqref{the-flow0},  will be denoted by $\lm_u$). The initial data consist of a positive function $u(0)=u_0$ on $M$ and an eigenpair $(\lm_u(0),\psi_u(0))=(\lm_0,\psi_0)$. We shall consider this problem as a model for a parabolic approach to other problems in geometric analysis that involve a nonlocal term.  
    
    In what follows, let $(M,\ig,\sa)$ be an $m$-dimensional closed spin manifold, $m\geq3$, with a fixed spin structure $\sa:P_{\Spin}(M)\to P_{\SO}(M)$ and a fixed smooth Riemannian metric $\ig$. Endow the space $\cm$ of Riemannian metrics over $M$ with the $C^\infty$-topology. In order to provide a precise description of our result, we recall  a ``rigidity" condition introduced in \cite{Canzani} for the Dirac operator.
    
    \begin{Def}
    	An eigenspace of the Dirac operator $D_\ig$ is said to be a rigid eigenspace if it has dimension greater or equal than two, and for any two eigenspinors $\psi$, $\va$ with $\int_M|\psi|_\ig^2d\vol_{\ig}=\int_M|\va|_\ig^2d\vol_{\ig}=1$ then
    	\[
    	|\psi(x)|_\ig=|\va(x)|_\ig \quad \forall x\in M.
    	\]
    \end{Def}
    
    Denote  $\cc_+^\infty$ the set of all positive smooth functions on $M$ and equip it with $C^\infty$-topology. For each $u_0\in\cc_+^\infty$, let $(\lm_0,\psi_0)$ with $\lm_0\neq0$ be a nontrivial eigenpair of the generalized eigenvalue problem
    \begin{\equ}\label{eigen-equ}
    D_\ig\psi=\lm u_0^{\frac2{m-2}}\psi \quad \text{for } (\lm,\psi)\in \R\times\Ga(\mbs(M,\ig)).
    \end{\equ}

    Our main result  is the following   
    \begin{Thm}\label{main-thm}
    Assume $D_\ig$ has no rigid eigenspaces. For a generic choice of $u_0\in \cc_+^\infty$, 
    the problem \eqref{the-flow0} with initial data $(u_0,\lm_0,\psi_0)$ admits a unique smooth solution in the time interval $[0,T)$ for some $T>0$.
    \end{Thm}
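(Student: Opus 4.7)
The plan is to regard \eqref{the-flow0} as a nonlocal quasilinear parabolic equation for the single scalar unknown $u$, once one knows that the generalized eigenpair $(\lm_u,\psi_u)$ is genuinely determined by $u$ in a smooth fashion near the initial data. First I would reduce the spinor constraint to a parameter-dependent self-adjoint eigenvalue problem: setting $\rho=u^{2/(m-2)}>0$, the equation $D_\ig\psi=\lm\rho\psi$ is equivalent, via the substitution $\phi=\rho^{1/2}\psi$, to $T_\rho\phi=\lm\phi$ with $T_\rho:=\rho^{-1/2}D_\ig\rho^{-1/2}$. This $T_\rho$ is formally self-adjoint on $\Ga(\mbs(M,\ig))$, of first order, and shares the principal symbol of $D_\ig$ up to the positive factor $\rho^{-1}$; standard elliptic theory then produces a discrete real spectrum and smooth spectral projections depending on $\rho$.

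The central analytic step is to prove that for a generic choice of $u_0\in\cc_+^\infty$ the eigenvalue $\lm_0$ of $T_{\rho_0}$ carrying $(\lm_0,\psi_0)$ is \emph{simple}. This combines the no-rigid-eigenspace hypothesis on $D_\ig$ (which forbids the only degeneracy that weight perturbations cannot split, since inside a rigid eigenspace the pointwise moduli of competing eigenspinors coincide and $\rho\mapsto T_\rho$ fails to separate them) with a Bahri/Canzani-style transversality argument along the real-analytic family $\rho\mapsto T_\rho$. Once simplicity holds at $t=0$, Kato--Rellich analytic perturbation theory delivers a unique smooth extension $u\mapsto(\lm_u,\psi_u)$ on a $C^{2,\al}$-neighborhood of $u_0$, after imposing the normalization $\int_M\rho|\psi_u|_\ig^2\,d\vol_\ig=1$ together with a phase convention; in particular $u\mapsto|\psi_u|_\ig^2$ is smooth into a Banach space of sufficiently regular sections and $u\mapsto\lm_u$ is smooth into $\R$.

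With $(\lm_u,\psi_u)$ available as a smooth functional of $u$, expanding the time derivative on the left of \eqref{the-flow0} rewrites the flow in the schematic form
\begin{equation*}
u_t=\frac{4(m-1)}{m-2}\,u^{-\frac{4}{m-2}}\De_\ig u+F(u,\nabla u,\psi_u,\lm_u),
\end{equation*}
where $F$ collects local lower-order terms together with a nonlocal correction proportional to
\begin{equation*}
c(u)=\frac{\int_M uL_\ig u\,d\vol_\ig}{\int_M u^{\frac{2}{m-2}}|\psi_u|_\ig^2\,d\vol_\ig}.
\end{equation*}
Because $u_0$ is strictly positive on the compact manifold $M$, the diffusion coefficient $u^{-4/(m-2)}$ stays uniformly bounded above and below on a short time interval, so the PDE is uniformly parabolic with smooth coefficients and a bounded, $u$-continuous nonlocal perturbation. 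A contraction-mapping argument in the parabolic H\"older space $C^{2+\al,1+\al/2}([0,T]\times M)$---or, equivalently, Amann's and Lunardi's quasilinear parabolic existence theory adapted to a $C^1$ nonlocal term---then yields a unique short-time solution, which parabolic bootstrapping promotes to $C^\infty$ in space-time, using the elliptic regularity available for $T_\rho$ to upgrade the regularity of $\psi_u$ along the flow.

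The main obstacle is the second step: guaranteeing simplicity of $\lm_0$ for generic $u_0$ and enough smoothness of $u\mapsto\psi_u$ to feed the parabolic machinery. The no-rigid-eigenspace hypothesis on $D_\ig$ is precisely tailored to eliminate the unique structural obstruction, while the genericity of $u_0$ excludes the meagre set on which simplicity could still fail. Once these are in place, the remaining work is essentially classical; the only delicate point being the verification that the nonlocal functional $c(u)$ and the implicitly defined spinor $\psi_u$ both vary continuously (indeed $C^1$) in the chosen Banach scale, which is exactly what the perturbation-theoretic Step~2 supplies.
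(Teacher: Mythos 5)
Your proposal is correct in its overall architecture and, for the spectral half of the argument, essentially coincides with the paper: generic simplicity of the eigenvalue is obtained from the no-rigid-eigenspace hypothesis via Canzani's conformal-splitting theorem (your symmetrization $T_\rho=\rho^{-1/2}D_\ig\rho^{-1/2}$ is just a different normal form for what the paper does through the conformal covariance $D_{\ig^u}$ and the weighted inner product $(\cdot,\cdot)_2^u$), and the smooth dependence $u\mapsto(\lm_u,\psi_u)$ that you get from Kato--Rellich/Riesz projections is obtained in the paper by an implicit function theorem applied to $\Phi(u,\lm,\psi)=(\int_Mu^{\frac2{m-2}}|\psi|_\ig^2d\vol_\ig-1,\,u^{-\frac2{m-2}}D_\ig\psi-\lm\psi)$, which in addition yields the explicit formulas for $\lm'$ and $\psi'$ that the paper later needs. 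Where you genuinely diverge is the parabolic step: you propose a contraction-mapping/Amann--Lunardi argument in the parabolic H\"older scale $C^{2+\al,1+\al/2}$, whereas the paper works in $L^2$-based parabolic Sobolev spaces $P^l(T)$ \`a la Polden, first proving that the linearization $D\msf[u_*]$ (a uniformly parabolic operator plus a nonlocal, $t$-fiber-preserving lower-order term $\cl$ satisfying (A1)--(A3)) is an isomorphism $P^{l+1}(T)\to H^{2l+1}(M)\times P^l(T)$, and then invoking the inverse function theorem around an explicitly constructed approximate solution $w_*$ (a Taylor polynomial in $t$ whose coefficients are chosen so that the residual vanishes to high order at $t=0$), followed by a time-shift approximation. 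Your route is more classical and arguably shorter if one grants that the nonlocal term $c(u)|\psi_u|_\ig^2u^{-m/(m-2)}$ is locally Lipschitz in the H\"older scale; the paper's route buys an isomorphism theorem for the linearized nonlocal operator that is reused verbatim in the uniqueness/energy estimate and in the verification (Appendix A.3) that $u\mapsto|\psi_u|_\ig^2$ is $C^1$ with values in $P^l(T)$ --- this last verification is the genuinely laborious point in either approach, and your proposal correctly identifies it as the delicate step but leaves it at the level of an assertion; be aware that it requires controlling all time derivatives of $\psi_u$ up to order $l$ through repeated differentiation of the eigenvalue identity and uniform resolvent bounds of the form $\|(u^{-\frac2{m-2}}D_\ig-\lm_u)^{-1}(I-P_{\lm_u})\|\leq C_*$, which in turn forces one to shrink $T$ so that the tracked eigenvalue stays isolated. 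Finally, for smoothness of the solution up to $t=0$ you implicitly need the compatibility conditions to all orders; on a closed manifold these are automatic for this equation, but the paper's construction of $w_*$ is precisely the device that makes this explicit, and your write-up should acknowledge it rather than rely on generic ``bootstrapping.''
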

    
    Here, by ``generic" we mean the set of functions $u_0\in\cc_+^\infty$ for which Theorem \ref{main-thm} holds is a residual set of $\cc_+^\infty$.
    
    The previous theorem provides only the local well-posedness for \eqref{the-flow0} . We would like to emphasize that global well-posedness does not follow from standard techniques due to the special structure of our PDE.  It is important to notice that the problem we consider is of parabolic-elliptic nature. Of course, our Theorem \ref{main-thm} implies, by standard parabolic arguments, global well-posedness for {\sl small} data.  More importantly, the {\sl convergence} of the flow is a difficult problem. We will address it in a forthcoming work but we would like to point out that if convergence holds at infinity we expect that the limit would solve
     \begin{\equ}\label{ED-2}
   			\left\{
   			\aligned
   			& L_\ig u =  \frac{\int_M uL_\ig u  \, d\vol_{\ig}}{\int_M u^{\frac2{m-2}}|\psi|_\ig^2 \, d\vol_\ig}|\psi|_\ig^2 u^{\frac{4-m}{m-2}}, \\[0.3em]
   			& D_\ig\psi=\frac{\int_M(D_\ig\psi,\psi)_\ig  \, d\vol_\ig}{\int_M u^{\frac2{m-2}}|\psi|_\ig^2 \, d\vol_\ig} u^{\frac2{m-2}}\psi,
   			\endaligned
   			\right. \quad u>0.
   		\end{\equ}
   		and a direct computation shows that there is a one-to-one correspondence between solutions of \eqref{ED-1} and \eqref{ED-2}.
   		
   		Of course, one would like to get rid of the ``rigidity" assumption in Theorem \ref{main-thm}. This will be the case in dimension three since the quaternionic rank of the spinor bundle is simple, that is $\rank_{\bH}\mbs(M)=1$ when $\dim M=3$. Hence, as a specific case, we have
   		
   		\begin{Thm}\label{main-thm2}
          For a generic smooth positive function $u_0$ on a closed spin $3$-manifold $(M,\ig,\sa)$, and an arbitrary choice of $(\lm_0,\psi_0)$ satisfying
          \begin{\equ}\label{eigne-equ-3}
          D_\ig\psi_0=\lm_0 u_0^2\psi_0
          \end{\equ}
          with $\lm_0\neq0$ and $\psi_0\neq0$. There exist $T>0$ and a unique smooth triple $(u,\lm_u,\psi_u):[0,T)\to \cc_+^\infty\times\R\times\Ga(\mbs(M,\ig))$ so that $u(0)=u_0$, $\lm_u(0)=\lm_0$, $\psi_u(0)=\psi_0$ and
          \[
          D_\ig\psi_u(t)=\lm_u(t) u(t)^2\psi_u(t) \quad \text{for all time } t\in[0,T).
          \]
         Furthermore, $(u,\psi_u)$ evolves through the following flow equation
          \[
          \frac{\pa u}{\pa t}=-\bigg[ L_{\ig}u-\bigg(\frac{\int_M uL_{\ig}u\,d\vol_{\ig}}{\int_M u^{2}|\psi_u|_\ig^2\,d\vol_{\ig}}\bigg) |\psi_u|_{\ig}^2 u \bigg]u^{-4}.
          \]
   		\end{Thm}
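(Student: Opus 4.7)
The statement is the three-dimensional specialisation of Theorem \ref{main-thm} in which the ``no rigid eigenspaces'' hypothesis is dropped. This is possible because of a structural feature of spin geometry in dimension three: the spinor bundle $\mbs(M,\ig)$ carries a parallel antilinear quaternionic structure $j$ that commutes with $D_\ig$, is a pointwise isometry, and satisfies $(\psi,j\psi)_\ig=0$ pointwise (the latter follows from $(j\phi,j\psi)_\ig=\overline{(\phi,\psi)_\ig}$ applied to $\phi=\psi$, $\psi'=j\psi$). Consequently every $\C$-eigenspace of the weighted problem \eqref{eigne-equ-3} is $j$-invariant, in particular of even complex dimension, and for any $a,b\in\C$ with $|a|^2+|b|^2=1$ one has $|a\psi+bj\psi|_\ig^2=|\psi|_\ig^2$ pointwise. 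Thus within a single quaternionic line the function $x\mapsto|\psi(x)|_\ig^2$ is determined by the quaternionic line and the normalisation $\int_M u^2|\psi|_\ig^2\,d\vol_\ig$ alone.

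I would then follow the scheme of Theorem \ref{main-thm}. Rewrite \eqref{eigne-equ-3} as the self-adjoint spectral problem $T_u\psi=\lm\psi$, with $T_u:=u^{-1}D_\ig u^{-1}$ acting on $L^2(\mbs(M,\ig),d\vol_\ig)$; the family depends analytically on $u\in\cc_+^\infty$. By Kato's analytic perturbation theory, any isolated, quaternionically simple eigenvalue $\lm_0$ of $T_{u_0}$ admits a smooth continuation $u\mapsto(\lm_u,\psi_u)$ for $u$ in a $C^\infty$-neighbourhood of $u_0$. The ``generic $u_0$'' hypothesis is used precisely to secure this quaternionic simplicity: by a Sard--Smale transversality argument applied to the $\cc_+^\infty$-parametrised family $\{T_u\}$, the set of $u_0$ for which every nonzero eigenvalue of $T_{u_0}$ is quaternionically simple is residual in $\cc_+^\infty$. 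For such $u_0$ and any nontrivial eigenpair $(\lm_0,\psi_0)$ the continuation applies, and the structural remark above shows that $|\psi_u|_\ig^2$, with the normalisation $\int_M u^2|\psi_u|_\ig^2\,d\vol_\ig=1$, is an unambiguous smooth functional of $u$, independent of the $\Spin(3)$-gauge ambiguity inside the quaternionic eigenline.

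Substituting this selection into the flow yields the scalar nonlocal equation
\[
\pa_t u=-u^{-4}\bigl[L_\ig u-C(u)\,|\psi_u|_\ig^2\,u\bigr],\qquad C(u)=\frac{\int_M uL_\ig u\,d\vol_\ig}{\int_M u^2|\psi_u|_\ig^2\,d\vol_\ig},
\]
whose principal part is $u^{-4}(-L_\ig)$. Since $u_0>0$ on the compact manifold $M$, the equation is uniformly strongly parabolic in a $C^\infty$-neighbourhood of $u_0$, so standard quasilinear parabolic theory on closed manifolds (Solonnikov-type estimates, or semigroup methods on H\"older or little-H\"older scales) furnishes a unique smooth solution on a maximal short interval $[0,T)$, with positivity of $u(t)$ preserved for small $t$. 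Feeding $u(t)$ back into the spectral continuation reconstructs $(\lm_u(t),\psi_u(t))$, giving the required triple and verifying the constraint $D_\ig\psi_u(t)=\lm_u(t)u(t)^2\psi_u(t)$ by construction.

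The main obstacle is the smooth selection of $|\psi_u|_\ig^2$ as a functional of $u$ in the presence of the $\Spin(3)$-gauge ambiguity inside the quaternionic eigenline, i.e.\ exactly the step where rigidity is invoked in higher dimensions. In dimension three, the identities $|j\psi|_\ig=|\psi|_\ig$ and $(\psi,j\psi)_\ig=0$ render this selection automatic; once it is in place, the remainder is a fairly standard short-time existence statement for a nonlocal quasilinear scalar parabolic PDE on a closed manifold.
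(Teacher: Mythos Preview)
Your plan is correct and follows essentially the same strategy as the paper. The paper also reduces Theorem~\ref{main-thm2} to the general machinery behind Theorem~\ref{main-thm} by exploiting the fact that in dimension three $\rank_{\bH}\mbs(M)=1$, so that a quaternionically simple eigenvalue has a well-defined density $|\psi_u|_\ig^2$ (your pointwise identity $|a\psi+bj\psi|_\ig^2=|\psi|_\ig^2$ is precisely the mechanism).

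Two differences in execution are worth noting. First, for the genericity of quaternionic simplicity the paper does not run an abstract Sard--Smale argument but invokes a concrete result of Dahl (Lemma~\ref{Dahl-lemma}): on a closed spin $3$-manifold every nonzero Dirac eigenvalue with $\dim_{\bH}>1$ has the conformal splitting property, which feeds directly into Lemma~\ref{lem:open-dense} to give open density of the ``good'' set $\cu_{\ig,n}^k$. Your transversality sketch would also work, but Dahl's lemma is the off-the-shelf tool here. Second, the paper does not appeal to ``standard quasilinear parabolic theory'' for the flow; because the lower-order term $u\mapsto |\psi_u|_\ig^2$ is genuinely nonlocal (it factors through the resolvent of $u^{-\frac{2}{m-2}}D_\ig$), the authors build a bespoke linear theory in Section~\ref{sec:nonlocal linear equ} for operators $\pa_t-\ca\De_\ig+\cl$ with $\cl$ satisfying structural hypotheses (A1)--(A3), verify these for the linearisation (Corollary~\ref{lem:A1-A3}), and then run an inverse function theorem on the parabolic scale $P^l(T)$ (Proposition~\ref{prop:local-well-posedness}). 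Your outline is compatible with this, but you should be aware that the ``standard'' packages (Solonnikov, analytic semigroups) would still require you to check that the nonlocal spectral term is a bounded lower-order perturbation in the relevant function spaces---exactly what the paper isolates as (A1)--(A3).
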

   		
   		Here we would like to provide an explanation for the generic selection of the initial value $u_0\in\cc_+^\infty$. In fact, our results need to be established upon the basis that $\lm_u(t)\neq0$ always remains a simple eigenvalue of the eigen-problem \eqref{eigen-equ} or \eqref{eigne-equ-3}. This is because, if at some time $t$, $\lm_u(t)$ is a multiple eigenvalue, it is unclear whether the corresponding eigenspinor can smoothly evolve along the Einstein-Dirac flow. Even in the simplest case of matrix operators, we can easily find examples where the initial matrix, under a smooth perturbation, has a multiple eigenvalue and its corresponding eigenvector that do not continuously depend on the perturbation. Therefore, this limitation currently restricts us to only obtaining short-time existence of the Einstein-Dirac flow.
   
  This paper is organized as follows. In Section 2, we will briefly introduce some notations and the Sobolev spaces that will be used later. In Section 3, we will describe the continuous dependence properties of the solutions to the eigenvalue problem of the Dirac operator under continuous changes in the Riemannian metric. In Section 4, we will establish existence and uniqueness for solutions of a linear parabolic equation with non-local term, which will serve as a fundamental tool for studying the flow problem mentioned earlier. In Section 5, we will prove Theorems \ref{main-thm} and \ref{main-thm2}, which provide the short time existence for the Einstein-Dirac parabolic flow \eqref{the-flow0}.

	\section{Basic notations}\label{sec:basic notations}
	
		
In this section, we shall briefly recall some notations and properties of conformal differential operators involved, and provide the definitions of the Sobolev spaces that we will be using.

Let $(M,\ig)$ be a closed Riemannian $m$-manifold, $m\geq3$, we denote the conformal Laplacian (also known as the Yamabe operator) acting on scalar functions by
\[
L_\ig u:= -\frac{4(m-1)}{m-2}\De_\ig u + \scal_\ig u
\]
where $\De_\ig$ is the Laplace-Beltrami operator and $\scal_{\ig}$ is the scalar curvature. The conformal covariance of $L_\ig$ reads as
\[
L_\ig u= e^{\frac{m+2}{2}f}L_{e^{2f}\ig}\big( e^{-\frac{m-2}2f}u \big).
\]
For an integer $k$, and $u:M\to\R$ smooth enough, we denote by $\nabla_\ig^k u$ the $k$th covariant derivative of $u$ and $|\nabla_\ig^ku|$ the norm of $\nabla_\ig^ku$ defined in a local chart by
\[
|\nabla_\ig^ku|=\ig^{i_1 j_1}\cdots\ig^{i_kj_k}(\nabla_\ig^ku)_{i_1\dots i_k}(\nabla_\ig^k u)_{j_1\dots j_k}
\]
Recall that $(\nabla_\ig u)_i=\pa_i u$ while $(\nabla_\ig^2)_{ij}=\pa_{i}\pa_j u-\Ga_{ij}^k\pa_k u$, where $\Ga_{ij}^k$ is the Christoffel symbols. In the sequel, we will denote $C^k(M)=\{u:M\to\R \,|\, \nabla_\ig^j u \text{ is continuous for } j=0,1,\dots,k\}$ and $\cc_+^k:=\{u\in C^k(M) \,|\, u>0\}$. The notation $u\in C^\infty(M)$ (respectively $u\in \cc_+^\infty$) will be understood as $u\in C^k(M)$ (respectively $u\in \cc_+^k$) for all $k\in\N$. We also denote $H^k(M)$ the Sobolev spaces on $M$ equipped with the norm
\[
\|u\|_{H^k(M)}=\sqrt{\sum_{j=0}^k\int_M|\nabla_\ig^ju|^2d\vol_{\ig}}.
\]
The inner product $\inp{\cdot}{\cdot}_{H^k(M)}$ associated to $\|\cdot\|_{H^k(M)}$ is given by
\[
\inp{u}{v}_{H^k(M)}=\sum_{j=0}^k\int_M\inp{\nabla_\ig^j u}{\nabla_\ig^j v} d\vol_{\ig}
\]
where, in the above expression, $\inp{\cdot}{\cdot}$ is the inner product on covariant tensor fields associated to $\ig$. Note here that, by the Sobolev embedding theorems, one can look at $H^k(M)$ as subspaces of $L^p(M)$ for $1\leq p\leq \frac{2m}{m-2k}$. And the standard $L^p$-norm will be simply denoted by $|\cdot|_p$.

When $(M,\ig)$ is equipped additionally with a spin structure, we shall consider $\mbs(M)$ the canonical spinor bundle over $M$ (see \cite{Friedrich00}), whose sections are called spinors on $M$. This bundle is endowed with a natural Clifford multiplication $\mfm$, a hermitian metric $(\cdot,\cdot)_\ig$ and a natural metric connection $\nabla^{\mbs(M)}$. The Dirac operator $D_\ig$ acts on spinors
\[
D_\ig: \Ga(\mbs(M))\to\Ga(\mbs(M))
\]
is defined as the composition $\mfm\circ\nabla^{\mbs(M)}$ in the sense that
\[
\nabla^{\mbs(M)}:\Ga(\mbs(M))\to \Ga(T^*M\otimes \mbs(M)),
\]
\[
\mfm:\Ga(TM\otimes \mbs(M))\to \Ga(\mbs(M)),
\]
where $T^*M\simeq TM$ have been identified by means of the metric $\ig$. We also have a conformal covariance here:
\[
D_{\ig}\psi=e^{\frac{m+1}2f}D_{e^{2f}\ig}\big( e^{-\frac{m-1}2f}\psi \big),
\]
see for instance \cite{Hijazi, Hitchin}. Let us remind here that the Dirac operator $D_\ig$ on a closed spin manifold is essentially self-adjoint in $L^2(M,\mbs(M))$, has compact resolvent and there exists a complete $L^2$-orthonormal basis of eigenspinors $\{\psi_i\}_{i\in\Z}$:
\[
D_\ig\psi_i=\lm_i\psi_i,
\]
and the eigenvalues $\{\lm_i\}_{i\in\Z}$ are of finite multiplicities and are unbounded on both sides of the real line (i.e. $|\lm_i|\to+\infty$ as $|i|\to+\infty$). For a spinor $\psi\in L^2(M,\mbs(M))$, it has a representation 
\[
\psi=\sum_{i\in \Z}\al_i\psi_i
\]
with $\al_i\in\C$ so that $\sum_{i}|\al_i|^2<+\infty$. And for $s>0$, we can define the unbounded operator $|D_\ig|^s: L^2(M,\mbs(M))\to L^2(M,\mbs(M))$ by
\[
|D_\ig|^s\psi=\sum_{i\in\Z}\al_i|\lm_i|^s\psi_i.
\]
Let us denote $H^s(M,\mbs(M))$ the domain of $|D_\ig|^s$, namely $\psi=\sum_i\al_i\psi_i\in H^s(M,\mbs(M))$ if and only if
\[
\sum_{i\in\Z}|\al_i|^2|\lm_i|^{2s}<+\infty.
\]
Then $H^s(M,\mbs(M))$ coincides with the standard Sobolev space $W^{s,2}(M,\mbs(M))$ (see \cite{Ammann}) and can be endowed with the inner product
\[
\inp{\psi}{\va}_s=\real\int_M\big(|D_\ig|^s\psi,|D_\ig|^s\va\big)_\ig d\vol_\ig + \int_M(\psi,\va)_\ig d\vol_{\ig}.
\]

\section{Perturbation for a spinorial eigenvalue problem}\label{sec:eigenvalues}

In this section, we are mainly interested in a generalized eigenvalue perturbation problem for the Dirac operator. More precisely, we intend to find the eigenvalues and eigenspinors of the equation
\begin{\equ}\label{Dirac-eigenvalue-problem}
D_\ig\psi=\lm u^{\frac2{m-2}}\psi, 
\end{\equ} 
that is perturbed from $D_\ig\psi_0=\lm_0 u_0^{\frac2{m-2}}\psi_0$ with known  eigenvalue and eigenspinor. Here some basic conventions are in order, such as we will always assume that $u_0$ and $u$ are positive functions, i.e., $u_0,u\in \cc_+^k$ for some $k\in\N\cup\{\infty\}$. 
%

In what follows, let us collect all possible eigenvalues of \eqref{Dirac-eigenvalue-problem} as
\[
\Sigma_{\ig,u}:=\big\{\lm\in\R:\, \ker\big(D_\ig-\lm u^{\frac2{m-2}}\big)\neq\{0\}\big\},
\]
and we call $\ker\big(D_\ig-\lm u^{\frac2{m-2}}\big)$ the eigenspace of $\lm\in\Sigma_{\ig,u}$.
The main results of this section are built upon the following observation. 
 
\begin{Prop}\label{prop:Sigma_u}
For each $u\in\cc_+^k$, the following holds:
	\begin{itemize}
		\item[$(1)$] The set $\Sigma_{\ig,u}$ is a closed subset of\, $\R\setminus\{0\}$ consisting of an unbounded discrete sequence of eigenvalues, in particular, $\Sigma_{\ig,u}$ is unbounded on both sides of\, $\R$.
		
		\item[$(2)$] The eigenspace of $\lm\in\Sigma_{\ig,u}$ is finite-dimensional and consists of eigenspinors in the class $\cc^{k+1}$.
		
		\item[$(3)$] If the new inner product
		\[
		(\psi,\va)^u_2=\real\int_M u^{\frac2{m-2}}(\psi,\va)_\ig\, d\vol_\ig
		\]
		is introduced on $L^2(M,\mbs(M))$, then the eigenspaces of all $\lm\in\Sigma_{\ig,u}$ form a complete orthonormal decomposition of $L^2(M,\mbs(M))$, i.e.,
		\[
		L^2(M,\mbs(M))=\ov{\bigoplus_{\lm\in\Sigma_{\ig,u}}\ker\big(D_\ig-\lm u^{\frac2{m-2}}\big)}.
		\] 
		where the close is taken over with respect to the $(\cdot,\cdot)_2^u$-norm.
\end{itemize}
\end{Prop}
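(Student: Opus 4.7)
The plan is to recast the weighted spectral problem $D_\ig\psi=\lm u^{\frac{2}{m-2}}\psi$ as an honest self-adjoint eigenvalue problem with compact resolvent, and then read off the three conclusions from the classical spectral theorem. I would pursue two equivalent realizations of the same operator: on the one hand, work on $L^2(M,\mbs(M))$ endowed with the deformed inner product $(\cdot,\cdot)_2^u$ and consider $A_u:=u^{-\frac{2}{m-2}}D_\ig$ with $\mathrm{dom}(A_u)=H^1(M,\mbs(M))$; on the other, conjugate and work with $B_u:=u^{-\frac{1}{m-2}}D_\ig u^{-\frac{1}{m-2}}$ on the standard $L^2$. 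Since $u\in\cc_+^k$ is strictly positive and bounded on the compact $M$, the multiplication operator $u^{\pm\frac{1}{m-2}}$ is a bounded isomorphism of every $H^s$ with $s\le k+1$, so both formulations carry the same spectrum; and the map $\phi\mapsto u^{-\frac{1}{m-2}}\phi$ identifies eigenspinors of $B_u$ with eigenspinors of the original problem at the same eigenvalue.

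First I would verify that $B_u$ is self-adjoint on $L^2$. Symmetry is the one-line computation $\int_M(B_u\phi,\eta)_\ig d\vol_\ig=\int_M(D_\ig(u^{-\frac{1}{m-2}}\phi),u^{-\frac{1}{m-2}}\eta)_\ig d\vol_\ig=\int_M(\phi,B_u\eta)_\ig d\vol_\ig$, using essential self-adjointness of $D_\ig$ recalled in Section 2 and a density argument on $H^1$. Equivalently, $A_u$ is symmetric on $(L^2,(\cdot,\cdot)_2^u)$ because the weights $u^{\frac{2}{m-2}}$ exactly cancel. Next I would show that $B_u$ has compact resolvent: $B_u$ is a first-order elliptic operator with the same principal symbol as $D_\ig$, so $(B_u-i)^{-1}:L^2\to H^1$ is bounded and, by Rellich–Kondrachov, compact from $L^2$ to $L^2$. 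Alternatively one writes $(B_u-i)^{-1}=u^{\frac{1}{m-2}}(D_\ig-iu^{\frac{2}{m-2}})^{-1}u^{\frac{1}{m-2}}$ and uses the compactness of the resolvent of $D_\ig$ together with boundedness of the multipliers.

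Once self-adjointness and compact resolvent are in place, (1) is immediate: the spectral theorem produces a discrete sequence of eigenvalues of finite multiplicity, closed in $\R$, and unbounded on both sides (inherited from the corresponding property of $D_\ig$, since $B_u$ differs from $D_\ig$ only by bounded multipliers so min-max comparisons with the eigenvalues of $D_\ig$ give $|\lm_i^{B_u}|\to\infty$ on both sides). The regularity part of (2) follows from a standard bootstrap on $D_\ig\psi=\lm u^{\frac{2}{m-2}}\psi$: since $D_\ig$ is a first-order elliptic operator and the multiplier $u^{\frac{2}{m-2}}$ lies in $C^k$ (because the map $t\mapsto t^{\frac{2}{m-2}}$ is smooth on $(0,\infty)$ and $u>0$), iterated elliptic regularity raises the regularity of $\psi$ from $H^1$ successively up to $\cc^{k+1}$. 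For (3), the decomposition is just the spectral theorem applied to $A_u$ on the Hilbert space $(L^2,(\cdot,\cdot)_2^u)$, where orthogonality is precisely measured in the $(\cdot,\cdot)_2^u$ inner product.

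The main analytic step — and essentially the only nontrivial one — is verifying that compactness of the resolvent survives the weighted reformulation; once that is checked, everything else is standard spectral bookkeeping. A small side point is the assertion $0\notin\Sigma_{\ig,u}$, which under the identification $\ker B_u=u^{\frac{1}{m-2}}\ker D_\ig$ is equivalent to $\ker D_\ig=\{0\}$; this is either tacit in the setting considered or should be arranged by an appropriate choice of metric (it is automatic, for instance, whenever a topological obstruction such as a nonvanishing $\widehat A$-genus forces nontrivial harmonic spinors to be absent, and generically in the sense of Bär–Dahl).
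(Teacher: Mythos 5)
Your argument is correct and, modulo packaging, it reproduces the paper's proof: the paper (Remark~\ref{rem:Sigma_u}) uses the conformal covariance of the Dirac operator to identify $\Sigma_{\ig,u}$ with the spectrum of $D_{\ig^u}$, $\ig^u=u^{\frac4{m-2}}\ig$, acting on $L^2$ of the conformal metric, and then simply cites the classical spectral theory of the Dirac operator of an arbitrary metric. Your operator $B_u=u^{-\frac1{m-2}}D_\ig u^{-\frac1{m-2}}$ is in fact unitarily equivalent to $D_{\ig^u}$ (conjugate $D_{\ig^u}$ by the isometry $\chi\mapsto u^{-\frac m{m-2}}\chi$ from $L^2(\ig)$ to $L^2(\ig^u)$ and use the covariance formula \eqref{conformal-Dirac}), so the two routes coincide in substance; the difference is that you re-derive self-adjointness and compact resolvent for $B_u$ from scratch via ellipticity and Rellich, whereas the paper gets these for free from the known theory of $D_{\ig^u}$. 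Your version is more self-contained; the paper's is shorter and also delivers two-sided unboundedness of the spectrum without the slightly hand-wavy ``min-max comparison'' you invoke (note that $B_u$ does not have the same principal symbol as $D_\ig$ --- it is rescaled by $u^{-\frac2{m-2}}>0$ --- though ellipticity is of course preserved, which is all you need). Your side remark about $0\notin\Sigma_{\ig,u}$ is well taken: this holds iff $\ker D_\ig=\{0\}$, which the paper leaves tacit (and its own ordering of the eigenvalues in Remark~\ref{rem:Sigma_u} actually allows $\lm_0(u)\le 0$), so flagging it is appropriate rather than a defect of your proof.
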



\begin{Rem}\label{rem:Sigma_u}
There are several ways of dealing with the set $\Sigma_{\ig,u}$, and here we point out that the eigenvalue problem $D_\ig\psi=\lm u^{\frac2{m-2}}\psi$ can be conformally transformed into
\begin{\equ}\label{conformally-transformed-equ}
D_{\ig^u}\va=\lm\va
\end{\equ}
where $\ig^u=u^{\frac4{m-2}}\ig$ and $\va=u^{-\frac{m-1}{m-2}}\psi$. The new inner product $(\cdot,\cdot)^u_2$ is actually the standard $L^2$-inner product induced by the metric $\ig^u$. And hence $\Sigma_{\ig,u}$ is nothing but the spectrum of $D_{\ig^u}$. With all these in hand, we find Proposition \ref{prop:Sigma_u} is a direct consequence of the classical spectral theory of elliptic self-adjoint operators. And for later use, we shall write $\Sigma_{\ig,u}=\{\lm_i(u):\,i\in\Z\}$ with the ordering
\[
-\infty\leftarrow\cdots\leq \lm_{-1}(u)\leq \lm_0(u)\leq0<\lm_1(u)\leq\lm_2(u)\leq\cdots\rightarrow+\infty
\]
where the eigenvalues are repeated with respect to their multiplicities. We also remark that these eigenvalues are not necessarily symmetric about the origin, see \cite{Ginoux}.
\end{Rem}

\subsection{A quick review on the multiplicity of eigenvalues}

When it comes to perturbing the function $u$ in order to describe the behavior of the eigenvalue problem \eqref{Dirac-eigenvalue-problem}, it will be beneficial and simplifying to assume that the eigenvalues and eigenspinors vary smoothly with respect to the external factors. But things are often more complicated, and here we refer the readers to Kato's book \cite{Kato} for a technical report of general perturbed eigenvalue problems. In what follows, we give an elementary example of $2\times2$-matrices to illustrate that  perturbation of eigenvectors may have nasty behavior.
\begin{Ex}
Let $I$ be the $2\times2$ identity matrix. Then any vector is an eigenvector, and we may simply set $v_0=(\sqrt2/2,\sqrt2/2)$ as one possible candidate with unit length. And if we consider a small perturbation 
\[
I(\vr)=I+\begin{pmatrix}
	\vr&0\\
	0&0
\end{pmatrix},
\]
then the unit eigenvectors will  be $v_1=(1,0)$ and $v_2=(0,1)$ which are independent of $\vr$. Clearly, $|v_0-v_1|$ and $|v_0-v_2|$ will never go to zero as $\vr\to0$. 
\end{Ex}

From the above example, we can observe that a small perturbation can cause a repeated eigenvalue to split into distinct new eigenvalues. And additionally, an arbitrary choice of the unperturbed eigenvector may lead to discontinuity in relation to the perturbation parameter.

Regarding the conformally covariant operators, we are lucky that, in many cases, it has been shown that the eigenvalues of these operators are generically simple. The main example is the Laplace operator on smooth functions on a compact manifold, see for instance \cite{BU, BW, Teytel, Uh}, and it is generally believed that eigenvalues of these formally self-adjoint operators with positive leading symbol on $SO(m)$ or $Spin(m)$ irreducible bundles are generically simple.

Now, let us turn back to the equation \eqref{conformally-transformed-equ}, which is conformally equivalent to our target problem \eqref{Dirac-eigenvalue-problem}. We remark here that, when $M$ has dimension $m=3,4,5 \,(\text{mod } 8)$, the spinor bundle has a quaternionic structure which commutes with Clifford multiplication~\cite[Section 1.7]{Friedrich00} or \cite[Page 33, Table III]{LM}. Hence, in these cases the eigenspaces of the Dirac operator are quaternionic vector spaces. So in the sequel when we are talking about the dimension of an eigenspace, we mean the quaternionic dimension $\dim_{\bH}$ for $m=3,4,5 \,(\text{mod } 8)$ and the complex dimension $\dim_\C$ for other cases.

For a closed spin $m$-manifold $M$ and $k\in\N\cup\{\infty\}$, denote by $\cm^k$ the space of Riemannian metrics on $M$ equipped with the $C^k$-topology. In what follows, for a given $\ig\in\cm^k$, let us consider the conformal change $\ig^u=u^{\frac4{m-2}}\ig$ with $u\in \cc_+^k$ and the possible splitting of the eigenvalues of $D_{\ig^u}$ when $u$ is a perturbation from some $u_0$. We introduce the following
\begin{Def}
{\it An eigenvalue $\lm$ of $D_{\ig^{u_0}}$ (i.e., $\lm\in\Sigma_{\ig,u_0}$) is said to have the conformal splitting property in the direction of $v\in C^\infty(M)$ if $\dim\ker(D_{\ig^{u_0}}-\lm)>1$ and there exists at least one pair of eigenvalues $\lm_{0,1}(\vr)$ and $\lm_{0,2}(\vr)$ of $D_{\ig^{u_0+\vr v}}$, which are continuous functions defined for small $\vr\geq0$ satisfying
\begin{itemize}
	\item[$(1)$] $\lm_{0,1}(0)=\lm_{0,2}(0)=\lm$;
	
	\item[$(2)$] $\lm_{0,1}(\vr)\neq\lm_{0,2}(\vr)$ for $\vr>0$.
\end{itemize}
The eigenvalues $\lm_{0,1}(\vr)$ and $\lm_{0,2}(\vr)$ are said to depart from the unperturbed eigenvalue $\lm$ by splitting at $\vr=0$.
}
\end{Def}
\begin{Rem}\label{Remark-Def}
Here we emphasize that, since the mapping $\vr\mapsto \ig^{u_0+\vr v}$ provides a real analytic parameterization of metrics in $\cm^k$, the perturbation theory developed in \cite[Chapter VII]{Kato} can be employed. In particular, each $\lm\in\Sigma_{\ig,u_0}$ provides a batch of real analytic functions $\lm_{0,1}(\vr),\dots,\lm_{0,l}(\vr)\in \Sigma_{\ig,u_0+\vr v}$ such that $\lm_{0,j}(0)=\lm$, $j=1,\dots,l$, and the sum of all multiplicities of $\lm_{0,j}(\vr)$'s equals to the multiplicity of $\lm$. And therefore, if $\lm\in\Sigma_{\ig,u_0}$ has the conformal splitting property in a direction $v$ then the multiplicity of each eigenvalue, which depart from $\lm$, will be strictly less than that of $\lm$. 
Meanwhile, those eigenvalue that has simple multiplicity will stay simple as $\vr$ varies.
\end{Rem}

In this setting, we have the following lemma which is due to M. Dahl \cite[Lemma 3.1]{Dahl}.

\begin{Lem}\label{Dahl-lemma}
On a closed spin $3$-manifold, given $\ig\in\cm^k$ and let $\lm$ be a non-zero eigenvalue of $D_\ig$ with $\dim_{\bH}\ker(D_\ig-\lm)>1$. Then there exists $v\in C^\infty(M)$ such that $\lm$ has the conformal splitting property in the direction of $v$.
\end{Lem}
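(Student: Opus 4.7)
The plan is to use first-order analytic perturbation theory in $\vr$ and then exploit the peculiarities of spin geometry in dimension three to force the splitting. First I would set up Kato's analytic perturbation framework: the family $u(\vr):=u_0+\vr v$ produces a real-analytic family $\ig^{u(\vr)}=u(\vr)^{4}\ig$ (since $m=3$), and through the conformal covariance of $D_\ig$ recalled in Remark~\ref{rem:Sigma_u} the generalized eigenproblem $D_\ig\psi=\lm u(\vr)^{2}\psi$ is equivalent to the standard eigenproblem $D_{\ig^{u(\vr)}}\va=\lm\va$. Kato's theory~\cite{Kato} then yields real-analytic branches $\lm_{0,1}(\vr),\dots,\lm_{0,l}(\vr)$ of eigenvalues of $D_{\ig^{u(\vr)}}$ emanating from $\lm$, and by Remark~\ref{Remark-Def} it suffices to produce a single $v$ for which two of these branches are not identically equal.

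Next I would run a Hellmann--Feynman computation. Pick an $\bH$-basis $\psi_1,\dots,\psi_l$ of $\ker(D_\ig-\lm u_0^2)$ that is orthonormal for $(\cdot,\cdot)_2^{u_0}$; this is allowed because the quaternionic structure $J$ on $\mbs(M)$ (available since $m=3$) commutes with $D_\ig$ and so preserves the eigenspace. Differentiating $D_\ig\psi(\vr)=\lm(\vr)u(\vr)^{2}\psi(\vr)$ at $\vr=0$, pairing in $L^2(M,\mbs(M))$ against $\psi_j$, and using self-adjointness of $D_\ig$ to cancel symmetric terms, one identifies the first derivatives $\dot\lm_{0,j}(0)$ as the (real) eigenvalues of the $l\times l$ $\bH$-Hermitian matrix
\[
M(v):=\Bigl(-2\lm\int_M u_0\,v\,\langle\psi_j,\psi_i\rangle_{\bH}\,d\vol_\ig\Bigr)_{i,j=1}^{l},
\]
where $\langle\cdot,\cdot\rangle_{\bH}$ is the quaternionic extension of $(\cdot,\cdot)_\ig$ via $J$. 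Conformal splitting in direction $v$ is then equivalent to $M(v)$ not being a real scalar multiple of $I_l$.

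I would then argue by contradiction: assume $M(v)=c(v)I_l$ for every $v\in C^\infty(M)$. Any off-diagonal entry gives $\int_M u_0\,v\,\langle\psi_j,\psi_i\rangle_{\bH}\,d\vol_\ig=0$ for all such $v$ and all $i\neq j$, hence, by positivity of $u_0$ and the fundamental lemma of the calculus of variations applied componentwise, $\langle\psi_j,\psi_i\rangle_{\bH}\equiv0$ pointwise on $M$. Fix $\bH$-independent eigenspinors $\psi_1,\psi_2$. Because $\mbs(M)$ has quaternionic rank one in dimension three, at each $x$ with $\psi_1(x)\neq0$ the pair $\{\psi_1(x),J\psi_1(x)\}$ is a $\C$-basis of $\mbs(M)_x$; writing $\psi_2(x)=\al(x)\psi_1(x)+\be(x)J\psi_1(x)$ and using the identity $(J\chi,\chi)_\ig\equiv0$ (an immediate consequence of $J^2=-\id$ and the $J$-compatibility of the hermitian metric), the vanishing of the two $\C$-components of $\langle\psi_2,\psi_1\rangle_{\bH}$ forces $\al(x)=\be(x)=0$. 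Hence $\psi_2$ vanishes on $\{\psi_1\neq0\}$, which by weak unique continuation for the Dirac operator is open and dense in $M$, so $\psi_2\equiv0$, contradicting the $\bH$-linear independence of $\psi_1,\psi_2$.

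The main technical hurdle I anticipate is the quaternionic bookkeeping: treating $M(v)$ honestly as an $\bH$-Hermitian matrix, decomposing its off-diagonal entries into the two $\C$-valued inner products $(\psi_j,\psi_i)_\ig$ and $(\psi_j,J\psi_i)_\ig$, and recognizing that off-diagonal vanishing alone --- without touching the subtler diagonal relations --- already clashes with the quaternionic rank-one structure of $\mbs(M)$ in three dimensions combined with unique continuation. A secondary point is that the hypothesis $\lm\neq0$ is precisely what is needed to ensure the leading-order perturbation matrix $M(v)$ is not forced to vanish identically.
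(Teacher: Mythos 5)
Your proposal is correct: it reconstructs, essentially step for step, the first-order conformal perturbation argument of Dahl \cite{Dahl} (Hellmann--Feynman matrix on the degenerate eigenspace, pointwise vanishing of the off-diagonal quaternionic inner products, quaternionic rank one of $\mbs(M)$ in dimension three, and weak unique continuation), which is exactly the source the paper cites for this lemma without reproducing a proof. The only harmless imprecision is the assertion that splitting in direction $v$ is \emph{equivalent} to $M(v)$ not being a real multiple of $I_l$ --- in general only the implication ``$M(v)\neq cI_l$ implies splitting'' holds, since two analytic branches with equal first derivatives could still separate at higher order, but this is precisely the direction your contradiction argument uses.
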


\begin{Rem}
\begin{itemize}
	\item[(1)] Let us point out here that, with an additional rigid assumption on the operator $D_\ig$, a similar result holds for higher dimensions (see \cite{Canzani} for detailed statements). And it is unclear whether or not such a rigid assumption can be dropped in general cases. The reason that one does not need further assumptions in dimension $3$ is because the quaternionic rank of the spinor bundle is simple, that is $\rank_{\bH}\mbs(M)=1$.
	
	\item[(2)] Due to the conformal covariance of the Dirac operator (see \eqref{conformal-Dirac}), $\dim_{\bH}\ker D_\ig$ is invariant under the conformal change of the background metric $\ig$. This is the reason why we only focus on non-zero eigenvalues and the associated eigenspaces.
\end{itemize}
\end{Rem}

By recalling the conformal equivalence of \eqref{Dirac-eigenvalue-problem} and \eqref{conformally-transformed-equ}, for given $k,n\in\N$, $\ig\in\cm^k$, let us consider the set
\begin{\equ}\label{cu-g-n-k}
\cu_{\ig,n}^k:=\big\{ u\in \cc_+^k:\, \lm \text{ is simple for all } \lm\in\Sigma_{\ig,u}\cap([-n,n]\setminus\{0\})  \big\}.
\end{\equ}
Then, in a view of Remark \ref{Remark-Def}, we have the following characterization for $\cu_{\ig,n}^k$.
%
%

\begin{Lem}\label{lem:open-dense}
On a closed spin $m$-manifold with $m\geq3$ and, if $m>3$, suppose additionally that every non-zero eigenvalue of $D_{\tilde\ig}$ has either simple multiplicity or the conformal splitting property in certain directions, for any $\tilde\ig\in [\ig]:=\{e^{2f}\ig:\, f\in\cc_+^k\}$. Then $\cu_{\ig,n}^k$ is an open dense subset of $\cc_+^k$.
\end{Lem}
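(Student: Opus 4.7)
The strategy is to prove openness and denseness of $\cu_{\ig,n}^k$ separately, using the continuous dependence of the generalized spectrum of $D_\ig\psi = \lm u^{\frac{2}{m-2}}\psi$ on $u$ together with the conformal splitting hypothesis.

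\textbf{Openness.} Fix $u \in \cu_{\ig,n}^k$. By Remark~\ref{rem:Sigma_u}, $\Sigma_{\ig,u}$ coincides with the spectrum of the self-adjoint elliptic operator $D_{\ig^u}$ on $(M, \ig^u)$. Let $\lm_1,\dots,\lm_N$ be the (simple) eigenvalues of $\Sigma_{\ig,u}$ lying in $[-n,n]\setminus\{0\}$, and choose pairwise disjoint open intervals $I_j \ni \lm_j$ with $I_j \cap \Sigma_{\ig,u} = \{\lm_j\}$ and $0 \notin \bigcup_j I_j$. By continuity of the spectral projections under $C^k$-perturbations of $u$ (developed through Section~\ref{sec:eigenvalues}), there exists a $C^k$-neighborhood $\cu$ of $u$ such that for every $u' \in \cu$ the number of eigenvalues (counted with multiplicity) of $D_{\ig^{u'}}$ in each $I_j$ equals one, while no eigenvalue of $D_{\ig^{u'}}$ lies in $([-n,n]\setminus\{0\}) \setminus \bigcup_j I_j$. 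The only spectral point in that complementary region at $u$ is possibly $0$, and its multiplicity $\dim\ker D_{\ig^u}$ is a conformal invariant by~\eqref{conformal-Dirac} and hence unchanged for $u' \in \cu$; in particular no new nonzero eigenvalue can emerge from $0$. This proves openness.

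\textbf{Denseness.} Given $u_0 \in \cc_+^k$ and $\vr > 0$, define
\[
N(u) := \sum_{\lm \in \Sigma_{\ig,u} \cap ([-n,n]\setminus\{0\})} \bigl(\dim \ker(D_\ig - \lm u^{\frac{2}{m-2}}) - 1\bigr),
\]
so that $u \in \cu_{\ig,n}^k$ iff $N(u) = 0$. Assume $N(u_0) > 0$ and pick a non-simple $\lm \in \Sigma_{\ig,u_0} \cap ([-n,n]\setminus\{0\})$; by the standing hypothesis applied to $\tilde \ig = \ig^{u_0} \in [\ig]$ (in dimension three this is Lemma~\ref{Dahl-lemma}), there exists a direction $v \in C^\infty(M)$ in which $\lm$, viewed as an eigenvalue of $D_{\ig^{u_0}}$, has the conformal splitting property. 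Choose $t_1 > 0$ so small that $u_1 := u_0 + t_1 v \in \cc_+^k$ with $\|u_1-u_0\|_{C^k} < \vr/N(u_0)$, the previously simple eigenvalues of $D_{\ig^{u_0}}$ in $[-n,n]\setminus\{0\}$ remain simple at $u_1$ (using the openness step just proved), and the analytic branches issuing from $\lm$ all have multiplicity strictly less than that of $\lm$ (by Remark~\ref{Remark-Def}). Then $N(u_1) < N(u_0)$, and iterating this procedure with a fresh direction at each step produces, after at most $N(u_0)$ iterations, a function $u \in \cu_{\ig,n}^k$ with $\|u - u_0\|_{C^k} < \vr$.

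\textbf{Main obstacle.} The key subtlety is that the conformal splitting hypothesis must be invoked at each intermediate metric $\ig^{u_i}$ along the iteration, not only at the initial one; this is precisely why the hypothesis is formulated over the entire conformal class $[\ig]$. In addition, each perturbation size $t_i$ must be carefully calibrated against the spectral gaps at the current step so that (a) no eigenvalue migrates across $\pm n$ into or out of the interval, and (b) the previously simple eigenvalues remain simple while the targeted multiple one actually splits. The eigenvalue $0$, which could in principle spawn new small nonzero eigenvalues, causes no difficulty thanks to the conformal invariance of $\dim \ker D_{\ig^u}$ noted in the openness argument.
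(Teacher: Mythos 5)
Your proof is correct and follows essentially the same route as the paper: openness from the continuous dependence of the Dirac spectrum on the (conformal) metric, and denseness by iteratively invoking the conformal splitting hypothesis at each intermediate metric $\ig^{u_i}\in[\ig]$ while analytic perturbation theory keeps the already-simple eigenvalues simple. The only difference is presentational: you make the paper's ``repeat the procedure finitely many times'' explicit via the excess-multiplicity counter $N(u)$ and spell out the spectral-gap bookkeeping, which tightens but does not change the argument.
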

\begin{proof}
We only need to show that $\cu_{\ig,n}^k$ is dense in $\cc_+^k$, since the openness follows from the fact that the eigenvalues of $D_\ig$ depend continuously on $\ig$ (see \cite[Proposition 7.1]{Bar1996}). To this end, let us fix $u_*\in \cc_+^k\setminus\cu_{\ig,n}^k$ and let $U$ be an open neighborhood of $u_*$. Since there exists $\lm_*\in\Sigma_{\ig,u_*}\cap([-n,n]\setminus\{0\})$ such that 
\[
\dim_{\bH}\ker(D_{\ig^{u_*}}-\lm_*)>1,
\]
we can use our assumption to obtain $v_1\in C^\infty(M)$ and a conformal deformation $\ig^{u_*+\vr_1 v_1}$ which decreases the multiplicity of $\lm_*$, for $\vr_1>0$ suitably small. Clearly, those eigenvalues in $[-n,n]\setminus\{0\}$ that were simple would remain being simple for such $\vr_1$ due to the analytic perturbation theory (see \cite{Kato}). Let $\vr_1$ be small such that $u_*+\vr_1 v_1\in U$. If $u_*+\vr_1 v_1\in \cu_{\ig,n}^k$, then we are done. And if not, we can repeat the above procedure finitely many times to get a function $u_*+\vr_1 v_1+\cdots+\vr_l v_l\in U\cap \cu_{\ig,n}^k$. Therefore, due to the arbitrariness of $u_*$ and $U$, we have $\cu_{\ig,n}^k$ is dense in $\cc_+^k$ as desired.
\end{proof}


\subsection{The perturbation}

For a given $u_0\in\cc_+^k$, we will now stick to the case that $\lm_0\in \Sigma_{\ig,u_0}\setminus\{0\}$ is a simple eigenvalue and $\psi_0$ is a normalized eigenspinor, that is we have 
\begin{\equ}\label{p0}
\ker\big(D_\ig-\lm_0u_0^{\frac2{m-2}}\big)=\spann\{\psi_0\} \quad \text{and} \quad \int_Mu_0^{\frac2{m-2}}|\psi_0|_\ig^2d\vol_{\ig}=1.
\end{\equ}
Actually, there is no loss of generality if we assume that $u_0\in\cu_{\ig,n_0}^k$ for some fixed $n_0\geq1$ and $|\lm_0|\leq n_0$, and we shall work under the assumptions  of Lemma \ref{lem:open-dense}.
It is clear that $L^2(M,\mbs(M))$ possesses the $(\cdot,\cdot)_2^{u_0}$-orthogonal decomposition $L^2(M,\mbs(M))=\spann\{\psi_0\}\op\spann\{\psi_0\}^\bot$.

To proceed, let us introduce the spinor subspaces $\tilde L^2(M,\mbs(M))=\spann_\R\{\psi_0\}\op\spann\{\psi_0\}^\bot$ and $\tilde H^1(M,\mbs(M))=H^1(M,\mbs(M))\cap\tilde L^2(M,\mbs(M))$. That is, we consider the subspaces in which the $\psi_0$-direction is always real. Furthermore,  we will consider the mapping  $\Phi:\cc_+^k\times\R\times \tilde H^1(M,\mbs(M))\to\R\times \tilde L^2(M,\mbs(M))$ with
\begin{\equ}\label{Phi}
\Phi(u,\lm,\psi)=\Big( \int_Mu^{\frac2{m-2}}|\psi|_\ig^2d\vol_{\ig}-1,\, u^{-\frac2{m-2}}D_\ig\psi-\lm \psi \Big).
\end{\equ}
Then, by taking derivatives with respect to $\lm$ and $\psi$, we find
\[
\nabla_{(\lm,\psi)}\Phi(u,\lm,\psi)[\mu,\va]=\Big( 2\real\int_M u^{\frac2{m-2}}(\psi,\va)_\ig d\vol_{\ig},\, u^{-\frac2{m-2}}D_\ig\va-\lm \va-\mu \psi\Big).
\]
for $\mu\in\R$ and $\va\in H^1(M,\mbs(M))$. And in what follows, we are going to use the Implicit function theorem to characterize the relation between the external factor $u$ and the perturbed couple $(\lm,\psi)$ in the eigenvalue problem \eqref{Dirac-eigenvalue-problem}.

Since we have assumed the validity of \eqref{p0} for $(u_0,\lm_0,\psi_0)$, we can write $\va=\tau\psi_0+\va^\bot$ with $\tau\in\R$ and $\va^\bot\in \spann\{\psi_0\}^\bot$ for any $\va\in \tilde H^1(M,\mbs(M))\subset\tilde L^2(M,\mbs(M))$ with respect to the inner product $(\cdot, \cdot)^{u_0}_2$. Then we have
\[
\nabla_{(\lm,\psi)}\Phi(u_0,\lm_0,\psi_0)[\mu,\va]=\big( 2\tau,\, u_0^{-\frac2{m-2}}D_\ig\va^\bot-\lm_0 \va^\bot-\mu \psi_0\big).
\]
At this moment, by collecting $\{\va_i\}_{i\in\Z}$ the $(\cdot,\cdot)_2^{u_0}$-normalized eigenspinors associated to $\Sigma_{\ig,u_0}$ (see Proposition \ref{prop:Sigma_u} and Remark \ref{rem:Sigma_u}), we can represent for an element $\phi\in \tilde L^2(M,\mbs(M))$ with the Fourier series $\phi=\sum \al_i\va_i$, where $\al_i\in\C$. Clearly, we have $\lm_{i_0}(u_0)=\lm_0$ and $\va_{i_0}=\psi_0$ for some $i_0$,  and hence $\al_{i_0}=(\phi,\psi_0)_2^{u_0}\in\R$. Then, by looking at the equation
\[
\nabla_{(\lm,\psi)}\Phi(u_0,\lm_0,\psi_0)[\mu,\va]=(\ka,\phi)
\]
with $\ka\in\R$ and $\phi\in \tilde L^2(M,\mbs(M))$ being arbitrarily given, we can solve it uniquely with 
\[
\mu=-\al_{i_0} \quad \text{and} \quad
\va=\frac\ka2\psi_0+\sum_{i\neq i_0}\frac{\al_i}{\lm_i(u_0)-\lm_0}\va_i.
\] 
Therefore $\nabla_{(\lm,\psi)}\Phi(u_0,\lm_0,\psi_0):\R\times \tilde H^1(M,\mbs(M))\to \R\times \tilde L^2(M,\mbs(M))$ is invertible. And now, a direct application of the Implicit function theorem gives us the following perturbation result.

\begin{Lem}\label{lem:Ck-perturbed-eigenvalue}
Given $k,n_0\in\N$, let $u_0\in\cu_{\ig,n_0}^k$ and $(\lm_0,\psi_0)$ satisfies \eqref{p0} with $\lm_0\in[-n_0,n_0]\setminus\{0\}$. If $u\in C^k(M\times[0,T])$ with $u(\cdot,0)=u_0$, then (by narrowing the time interval if necessary) there exist a $C^k$ mapping $[0,T]\to(\lm(t),\psi(t))$ such that $\lm(0)=\lm_0$, $\psi(0)=\psi_0$ and 
\begin{\equ}\label{Ck-perturbed-identity}
\Phi(u(\cdot,t),\lm(t),\psi(t))\equiv0 \quad \text{for }t\in[0,T].
\end{\equ}
Furthermore, 
apart from the function $u$ and its time-derivatives,  the $j$-th derivatives of $\lm$ and $\psi$ only depend on the first $j-1$ derivatives of $\lm$ and $\psi$, where $1\leq j\leq k$. In particular, the first order derivatives of $\lm$ and $\psi$ can be formulated as
\[
\lm'(t)=-\frac{2\lm(t)}{m-2}\int_M u(\cdot,t)^{\frac{4-m}{m-2}}\pa_t u(\cdot,t)|\psi(t)|_\ig^2\, d\vol_{\ig} 
\]
and
\[
\psi'(t)=\frac{\lm'(t)}{2\lm(t)}\psi(t)-\frac{2\lm(t)}{m-2}\big( u(t)^{-\frac2{m-2}}D_\ig-\lm(t) \big)^{-1}\circ(I-P_{\lm(t)})\big( u(t)^{-1}\pa_tu(\cdot,t)\psi(t) \big)
\]
for all $t\in[0,T]$, where $P_{\lm(t)}$ is the $L^2$-eigenprojection associated to $\lm(t)$. 
\end{Lem}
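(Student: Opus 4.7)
The natural approach is to invoke the Implicit Function Theorem for $\Phi$ defined in \eqref{Phi}, a setup the discussion preceding the lemma has already prepared: at $(u_0,\lm_0,\psi_0)$ the partial derivative $\nabla_{(\lm,\psi)}\Phi$ is an isomorphism between $\R\times\tilde H^1(M,\mbs(M))$ and $\R\times\tilde L^2(M,\mbs(M))$. Since $\Phi$ depends in a $C^k$ fashion on all of its arguments (the $u$-dependence is smooth on $\cc_+^k$ as $u$ stays positive and enters only through $u^{\pm 2/(m-2)}$, while $D_\ig$ is fixed), the IFT produces a $C^k$ branch $u\mapsto(\lm(u),\psi(u))$ in a $C^k$-neighborhood of $u_0$. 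Composing with the $C^k$ curve $t\mapsto u(\cdot,t)$, and shrinking $T$ so that $u(\cdot,t)$ stays in that neighborhood, furnishes the desired $C^k$ map $t\mapsto(\lm(t),\psi(t))$ satisfying \eqref{Ck-perturbed-identity}.

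To obtain the explicit first-order formulas I would differentiate \eqref{Ck-perturbed-identity} in $t$. The spinor component, after using $D_\ig\psi=\lm u^{2/(m-2)}\psi$ to simplify $\pa_t(u^{-2/(m-2)})D_\ig\psi$, becomes
\[
\bigl(u^{-\frac{2}{m-2}}D_\ig-\lm\bigr)\psi' \;=\; \lm'\psi+\frac{2\lm}{m-2}\,u^{-1}(\pa_tu)\,\psi.
\]
The operator $u^{-2/(m-2)}D_\ig$ is self-adjoint with respect to $(\cdot,\cdot)^u_2$ (by Remark~\ref{rem:Sigma_u}), so pairing this identity with $\psi$ in that inner product annihilates the left-hand side; meanwhile, differentiation of the scalar constraint $\int_M u^{2/(m-2)}|\psi|_\ig^2\,d\vol_\ig=1$ in $t$ yields $(\psi',\psi)^u_2=-\tfrac{1}{m-2}\int_M u^{(4-m)/(m-2)}\pa_t u\,|\psi|_\ig^2\,d\vol_\ig$. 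Combining the two identities produces the stated ODE for $\lm'(t)$, and, crucially, this ODE is \emph{exactly} the compatibility condition that forces $P_{\lm}$ to annihilate the right-hand side of the displayed equation for $\psi'$.

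Once compatibility is verified, the component $(I-P_{\lm})\psi'$ is recovered by inverting $u^{-2/(m-2)}D_\ig-\lm$ on the orthogonal complement of its kernel, which is legitimate because $\lm(t)$ remains a simple, isolated eigenvalue for small $t$ by the openness part of Lemma~\ref{lem:open-dense} together with the continuity of $\Sigma_{\ig,u}$ in $u$. Adding the $P_{\lm}\psi'=\tfrac{\lm'}{2\lm}\psi$ piece extracted from the inner-product identity above yields the displayed expression for $\psi'(t)$. Higher-order derivatives are produced inductively in exactly the same way: each $\pa_t^j(\lm,\psi)$ arises from applying $(\nabla_{(\lm,\psi)}\Phi)^{-1}$ to an expression polynomial in $\pa_t^i u$ for $i\le j$ and in the already-computed derivatives of $(\lm,\psi)$ of order less than $j$, which also accounts for the dependence structure asserted in the statement.

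The main obstacle, in my view, is less the IFT invocation than ensuring that the constrained resolvent $(u^{-2/(m-2)}D_\ig-\lm)^{-1}\circ(I-P_{\lm})$ remains a uniformly bounded operator on $\tilde L^2(M,\mbs(M))$ along the curve; this rests on the persistence of simplicity of $\lm(t)$ and on the smooth $t$-dependence of the spectral projection $P_{\lm(t)}$, both of which follow from the analytic perturbation theory referenced in Remark~\ref{Remark-Def} once one knows $u_0\in\cu_{\ig,n_0}^k$ and $|\lm_0|\le n_0$.
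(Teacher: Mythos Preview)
Your proposal is correct and follows exactly the route the paper takes: the paper presents Lemma~\ref{lem:Ck-perturbed-eigenvalue} as an immediate consequence of the Implicit Function Theorem applied to the map $\Phi$ of \eqref{Phi}, having already verified in the preceding paragraphs that $\nabla_{(\lm,\psi)}\Phi(u_0,\lm_0,\psi_0)$ is an isomorphism. Your write-up in fact supplies more detail than the paper does---in particular the differentiation of \eqref{Ck-perturbed-identity} leading to the explicit formulas for $\lm'$ and $\psi'$, the compatibility argument showing the right-hand side lies in the range of $u^{-2/(m-2)}D_\ig-\lm$, and the inductive structure for higher derivatives---all of which the paper leaves implicit.
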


\section{A nonlocal linear parabolic equation}\label{sec:nonlocal linear equ}


In this section, we  discuss the existence of solutions to a linear equation of the form
\begin{\equ}\label{a linear problem}
	\left\{
	\aligned
	&\pa_t u - \ca(x,t)\De_\ig u+\cl[u]=f(x,t) \\
	& u(x,0)=u_0(x)
	\endaligned
	\right.	
\end{\equ}
where $\ca:M\times[0,+\infty)\to\R$ are smooth and uniformly bounded with all their derivatives, and $\cl$ is a continuous linear operator between some suitable function spaces containing lower order differential operations (a specific characterization of $\cl$ will be given later). Moreover, let us assume that the function $\ca$ is uniformly positive, i.e., there exists a positive constant $\de>0$ such that $\ca(x,t)\geq\de$ for all $x$ and $t$. Hence Eq. \eqref{a linear problem} is of parabolic type. This developments in this section serve as the PDE foundations for the flow we investigate.

In order to give a precise description of the linear operator $\cl$ and to obtain  solutions of Eq. \eqref{a linear problem}, let us introduce some basic function spaces that are useful in our argument. Let $u,v:M\times[0,+\infty)\to\R$ be smooth functions, we set
\[
\inp{u}{v}_{LH_a^k}=\int_0^\infty e^{-2at}\inp{u(\cdot,t)}{v(\cdot,t)}_{H^k(M)}dt \quad \text{for } k=0,1,2,\dots
\]
and
\[
\inp{u}{v}_{HH_a}=\inp{u}{v}_{LH_a^1}+\inp{\pa_t u}{\pa_t v}_{LH_a^0},
\]
where $a>0$ is understood to be a positive parameter. We denote $LH_a^k$ and $HH_a$ to be the Hilbert spaces formed by completion of $C^\infty(M\times[0,+\infty))$ in the corresponding norms. We also consider $C_c^\infty(M\times(0,+\infty))$ as the space of smooth functions which vanish for very large and very small times, and denote $HH_{a,0}$ the completion of $C_c^\infty(M\times(0,+\infty))$ in $HH_a$. Furthermore, when we come to consider the higher regularity of solutions of Eq. \eqref{a linear problem}, we shall make use of the following Hilbert space
\[
P_a^l:=\big\{ u:M\times[0,+\infty)\to\R:\, \|\pa_t^ju\|_{LH_a^{2(l-j)}}<+\infty,\ j=0,\dots,l \big\}
\]
and equip it with the inner product
\[
\inp{u}{v}_{P_a^l}=\sum_{j=0}^l\inp{\pa_t^ju}{\pa_t^jv}_{LH_a^{2(l-j)}}=\sum_{2j+k\leq 2l}\int_0^\infty e^{-2at}\inp{\pa_t^j u}{\pa_t^jv}_{H^k(M)}dt.
\]
We note that $P_a^0=LH_a^0$ and the embedding $P_a^l\hookrightarrow LH_a^{2l}$ holds for $l\geq1$. And we also have the embedding $P_a^l(M)\hookrightarrow HH_a$ for $l\geq1$.

Now we turn to characterize the linear operator $\cl$ in Eq. \eqref{a linear problem}, and we shall make the following three standing assumptions under consideration. For ease of notation, the space of bounded linear operators from a Banach space $X$ to another Banach space $Y$ is denoted by $\msl(X,Y)$.

\begin{itemize}
	\item[(A1)] $\cl\in \msl(LH_a^1, LH_a^0)$, moreover, there exists a time-independent constant $C>0$ such that $|\cl[u](\cdot,t)|_2\leq C\|u(\cdot,t)\|_{H^1(M)}$ for all $u\in LH_a^1$. 
	
	\item[(A2)] $\cl[\al u](\cdot,t)=\al(t)\cl[u](\cdot,t)$ for any smooth function $\al:[0,+\infty)\to\R$
	
	\item[(A3)] $\cl\in\msl(P_a^l,P_a^{l})$ for $l\in\N$
\end{itemize}

Using the specific form of the operator $-\ca(x,t)\De_\ig+\cl$ and integration by parts, we introduce the following bilinear form, which we denote by $\msa_t$, defined on $H^1(M)\times H^1(M)$ (this product space is understood as a $t$-fiber of the total space $LH_a^1\times LH_a^1$):
\[
\aligned
\msa_t(u,v)&=\int_M-\ca v\De_\ig u+ v\cl[u]d\vol_\ig \\
&=\int_M\ca \nabla_\ig u\cdot\nabla_\ig v + v\nabla_\ig\ca \cdot\nabla_\ig u+v\cl[u]d\vol_\ig.
\endaligned
\]
Then a weak solution of Eq. \eqref{a linear problem} is defined by a function $u\in HH_a$ such that $u(x,0)=u_0(x)$ and
\begin{\equ}\label{def-weak-solu-linear}
	\inp{\pa_t u}{\phi}_{LH_a^0}+\int_0^\infty e^{-2at}\msa_t(u,\phi)dt=\inp{f}{\phi}_{LH_a^0}
\end{\equ}
for all $\phi\in C_c^\infty(M\times(0,+\infty))$. 

Our first existence result is stated as follows.

\begin{Prop}\label{prop:existence in HH-a}
	Let $\ca$ and $\cl$ be as above. There exists $a_0>0$ depending only on $\ca$, $\cl$ and the background manifold $(M,\ig)$ such that, if $a>a_0$,  then for every $f\in LH_a^0$ the equation
	\begin{\equ}\label{equ-initial-0}
		\pa_t u -\ca(x,t)\De_\ig u + \cl[u]=f, \quad u(\cdot,0)=0
	\end{\equ}
	has a unique weak solution in $HH_{a,0}$.
\end{Prop}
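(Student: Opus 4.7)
The plan is to remove the exponential weight by the substitution $w = e^{-at}u$ and $\tilde f = e^{-at}f$. Thanks to hypothesis (A2), which gives $\cl[e^{at}w] = e^{at}\cl[w]$, equation \eqref{equ-initial-0} transforms into the unweighted parabolic problem
\[
\partial_t w + aw - \ca(x,t)\De_\ig w + \cl[w] = \tilde f, \qquad w(\cdot,0) = 0,
\]
on $M \times [0,+\infty)$. The hypothesis $f \in LH_a^0$ is equivalent to $\tilde f \in L^2(\R_+; L^2(M))$, and the requirement $u \in HH_{a,0}$ is equivalent to $w \in L^2(\R_+; H^1(M)) \cap H^1(\R_+; L^2(M))$ with $w(\cdot,0) = 0$. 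The payoff of the substitution is the explicit coercive zeroth-order term $+aw$, whose magnitude is at our disposal.

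For existence I would construct $w$ by the standard Galerkin approximation on the $L^2$-orthonormal eigenbasis $\{e_k\}_{k\ge1}$ of $-\De_\ig$: projecting onto $V_N = \spann\{e_1,\ldots,e_N\}$ yields a linear ODE system for the Fourier coefficients of $w_N$ with zero initial data, globally solvable since the coefficient matrix depends only continuously on $t$. Testing the Galerkin equation against $w_N$ produces the energy identity, and using $\ca \ge \delta$, the $C^1$-bound on $\ca$, and the continuity of $\cl$ from (A1), Young's inequality absorbs the cross terms $\int_M w_N \nabla\ca \cdot \nabla w_N\, d\vol_\ig$ and $\int_M w_N \cl[w_N]\, d\vol_\ig$ against $\frac{\delta}{2}\|\nabla w_N\|_2^2$ and a multiple of $\|w_N\|_2^2$. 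Choosing $a > a_0$ with $a_0$ determined by $\delta$, the $C^1$-data of $\ca$, and the continuity constant of $\cl$, the coercive term $a\|w_N\|_2^2$ dominates this bad contribution and one finds, after integrating in $t$,
\[
\sup_{t\ge0}\|w_N(\cdot,t)\|_2^2 + \|w_N\|_{L^2(\R_+;H^1(M))}^2 \le C\|\tilde f\|_{L^2(\R_+;L^2(M))}^2.
\]
To upgrade to $\partial_t w \in L^2(\R_+;L^2(M))$, I would test against $\partial_t w_N$; the delicate term $\int \ca\,\nabla w_N \cdot \nabla\partial_t w_N$ is handled by integration by parts in time, whose boundary contribution at $t=0$ vanishes by $w_N(\cdot,0)=0$, at $t=+\infty$ by the decay just established, and whose volume remainder involves only $\partial_t\ca$, controlled by the previous bound. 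Weak compactness then produces a limit $w$ in the required space; unwinding the substitution yields $u \in HH_{a,0}$ solving the weak formulation \eqref{def-weak-solu-linear}. Uniqueness comes from the same energy estimate applied to the difference of two solutions, which starts from zero and has zero source.

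The main obstacle is the joint handling of the lower-order perturbations from $\nabla\ca$ and from $\cl$ on one hand, and of the stronger regularity $\partial_t u \in LH_a^0$ (rather than the weaker $LH_a^{-1}$ delivered by a generic Lions theorem on a pair $(V,H)$) on the other. The first is precisely what forces the threshold $a_0$; the second is the reason for the extra $\partial_t w_N$-testing step beyond the basic coercivity estimate. Throughout, hypothesis (A2) is essential, enabling both the clean substitution and the pointwise-in-time energy manipulations; assumption (A3) plays no role here and will be reserved for higher-regularity statements in the subsequent analysis.
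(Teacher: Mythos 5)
Your argument is correct, but it follows a genuinely different route from the paper. The paper does not use a Galerkin scheme: after the same preliminary exponential shift (there only by $e^{-\gamma t}$ with $\gamma=\ka$ fixed by the G\r{a}rding inequality, keeping the weight $e^{-2at}$ in all norms), it applies a one-sided variant of the Lax--Milgram theorem (Lemma \ref{lem:Lax}, in the spirit of Polden) to the bilinear form $\msp(w,\phi)=\inp{\pa_tw}{\pa_t\phi}_{LH_a^0}+\int_0^\infty e^{-2at}(\msa_t+\ga)(w,\pa_t\phi)\,dt$, i.e.\ it tests the equation against $\pa_t\phi$ rather than $\phi$; coercivity in the $HH_a$-norm is obtained for $a$ large by one integration by parts in time, and a time-shifting argument then recovers the weak formulation against arbitrary $\phi\in C_c^\infty(M\times(0,+\infty))$. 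Your Galerkin construction buys a more self-contained and classical proof, produces the solution together with the quantitative bound $\|\pa_tw\|_{L^2L^2}+\|w\|_{L^2H^1}\leq C\|\tilde f\|_{L^2L^2}$ in one pass (essentially pre-empting Lemma \ref{lem:regularity1}), and avoids both the abstract projection lemma and the $\pa_t\phi$-testing detour; the paper's route avoids the ODE machinery and any discussion of bases. Two points in your write-up deserve explicit justification. First, to reduce the projected equation to a linear ODE system you need $\cl[\sum_jc_j(t)e_j](\cdot,t)=\sum_jc_j(t)\cl[e_j](\cdot,t)$ for the (a priori only absolutely continuous) unknown coefficients $c_j$; this is the ``$t$-fiber preserving'' consequence of (A2) proved later in Lemma \ref{lem:cl-PT}, extended from smooth $\al$ by the continuity in (A1), and you should also record why $t\mapsto\int_Me_k\cl[e_j](\cdot,t)\,d\vol_\ig$ is at least measurable and bounded so that Carath\'eodory's theorem applies. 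Second, your conclusion places $u$ in $\{v\in HH_a:\ v(\cdot,0)=0\}$, whereas the statement asks for $u\in HH_{a,0}$, defined as the closure of $C_c^\infty(M\times(0,+\infty))$; the identification of these two sets is true but requires a short cut-off/mollification argument (and for uniqueness the inclusion $HH_{a,0}\subset\{v:\ v(\cdot,0)=0\}$, which is the easy direction, suffices). With these two remarks supplied, the proof is complete.
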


Before establishing the proof of Proposition \ref{prop:existence in HH-a}, let us recall a variant of the Lax-Milgram Theorem, which relaxes the continuity assumption on the bilinear form. 

\begin{Lem}\label{lem:Lax}
	Let $(H,\|\cdot\|_H)$ be a Hilbert space and $(V,\|\cdot\|_V)$ be an inner-product space continuously embedded in $H$. Let $\Phi:H\times V\to\R$ be a bilinear form with the properties that
	\begin{itemize}
		\item[$(1)$] the mapping $h\mapsto\Phi(h,v)$ is continuous on $H$ for each fixed $v\in V$,
		
		\item[$(2)$] $\Phi$ is coercive on $V$, i.e., $\Phi(v,v)\geq \bt\|v\|_V^2$ for some $\bt>0$. 
	\end{itemize}
	If $F:V\to\R$ is a continuous linear functional, then there exists $h_F\in H$ such that $F(v)=\Phi(h_F,v)$ for all $v\in V$. If, in addition $V$ is dense in $H$, then $h_F\in H$ is uniquely determined.
\end{Lem}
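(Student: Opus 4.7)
The plan is to translate $\Phi$ into a bounded-below linear operator via the Riesz representation theorem, and then assemble $h_F$ by extending the induced functional off the range of this operator.

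First I would fix $v\in V$ and use assumption (1) to see that $h\mapsto\Phi(h,v)$ is a bounded linear functional on $H$; the Riesz theorem then supplies a unique $Tv\in H$ with $\Phi(h,v)=\langle h,Tv\rangle_H$ for all $h\in H$, and bilinearity of $\Phi$ makes $T\colon V\to H$ a linear map. Combining the coercivity (2) with the continuous embedding $V\hookrightarrow H$ (constant $C_0$) then yields
\[
\bt\|v\|_V^2 \;\le\; \Phi(v,v)=\langle v,Tv\rangle_H \;\le\; C_0\|v\|_V\,\|Tv\|_H,
\]
so $\|Tv\|_H\ge(\bt/C_0)\|v\|_V$; in particular $T$ is injective with closed range $T(V)\subset H$.

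For existence of $h_F$, I would define $\Lambda_0\colon T(V)\to\R$ by $\Lambda_0(Tv):=F(v)$, which is well-defined by injectivity of $T$ and continuous in the $H$-norm since
\[
|\Lambda_0(Tv)|=|F(v)|\le\|F\|_{V^*}\|v\|_V\le(C_0\|F\|_{V^*}/\bt)\|Tv\|_H.
\]
Using the orthogonal decomposition $H=T(V)\oplus T(V)^\perp$ afforded by the closedness of $T(V)$, I would extend $\Lambda_0$ to all of $H$ by declaring it zero on $T(V)^\perp$, and then apply Riesz once more to obtain $h_F\in H$ with $\langle h_F,w\rangle_H=\Lambda_0(w)$ for every $w\in H$; specialising to $w=Tv$ gives $\Phi(h_F,v)=F(v)$, establishing existence.

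For uniqueness under the additional hypothesis that $V$ is dense in $H$, two solutions differ by some $h\in H$ with $\Phi(h,v)=0$ for every $v\in V$, equivalently $h\perp T(V)$ in $H$; the task reduces to verifying $T(V)^\perp=\{0\}$. This is where I expect the main obstacle to lie, because $T$ is only known to be bounded \emph{from below}, so density of $V$ in $H$ does not transfer to density of $T(V)$ by a direct continuity argument. I would attack it by picking $v_n\in V$ with $v_n\to h$ in $H$ and exploiting the identity $\bt\|v_n\|_V^2\le\Phi(v_n,v_n)=\Phi(v_n-h,v_n)=\langle v_n-h,Tv_n\rangle_H$ coming from $\Phi(h,v_n)=0$, then combining this with the lower bound on $T$ (equivalently, passing through the transpose $S\colon H\to V^*$, $S(h):=\Phi(h,\cdot)$, and a closed-graph argument to upgrade $T$ to a genuine continuous operator) in order to force $\|v_n\|_V\to 0$, and conclude $h=0$. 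This bootstrap via coercivity together with the embedding $V\hookrightarrow H$ is the delicate step of the argument.
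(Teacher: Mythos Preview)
The paper does not supply its own proof of this lemma; it merely cites \cite[Chapter~10, Theorem~16]{Friedman} and \cite[Lemma~3.2]{Sharples}, where the classical Lions projection argument appears. Your existence argument is precisely that argument and is correct apart from one slip: since $V$ is not assumed complete (the Remark immediately after the lemma stresses this), the lower bound $\|Tv\|_H\ge(\beta/C_0)\|v\|_V$ does \emph{not} force $T(V)$ to be closed in $H$. The repair is painless---extend $\Lambda_0$ by continuity to $\overline{T(V)}$ and then by zero on $\overline{T(V)}^\perp$ (or simply invoke Hahn--Banach), and the rest goes through unchanged.

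Your hesitation about the uniqueness half is well placed, but the obstruction is worse than you suspect. The closed-graph manoeuvre you propose is unavailable precisely because $V$ is incomplete, and no patch will work: the uniqueness claim is \emph{false} under the stated hypotheses. Take $H=L^2(0,1)$, $V=C_c^\infty(0,1)$ with $\|\cdot\|_V=\|\cdot\|_{L^2}$, and $\Phi(h,v)=-\int_0^1 h\,v''\,dx$. Then $h\mapsto\Phi(h,v)$ is bounded for each fixed $v$, the space $V$ is dense in $H$, and Poincar\'e gives $\Phi(v,v)=\int_0^1|v'|^2\ge\pi^2\|v\|_V^2$; yet the nonzero element $h\equiv 1$ satisfies $\Phi(1,v)=-\int_0^1 v''\,dx=0$ for every $v\in V$. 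So no abstract argument from hypotheses (1), (2) and density alone can deliver uniqueness. In the paper's application (Proposition~\ref{prop:existence in HH-a}) uniqueness of the weak solution is in any case recoverable by a direct energy/Gronwall estimate, as is done later in the proof of Lemma~\ref{lem:mff-isomorphism-PlT}; the abstract lemma is really only needed for existence.
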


\begin{Rem}
	In Lemma \ref{lem:Lax}, we do not assume that $(V,\|\cdot\|_V)$ is complete. And we also do not need the bi-continuity of the bilinear form $\Phi$, which is a critical hypothesis of the Lax-Milgram Theorem. We refer the readers to \cite[Chapter 10, Theorem 16]{Friedman} and \cite[Lemma 3.2]{Sharples} for a detailed proof of Lemma \ref{lem:Lax}.
\end{Rem}

\begin{proof}[Proof of Proposition \ref{prop:existence in HH-a}]
	First of all, let us remark that the assumptions on $\ca$ and $\cl$ imply that a G\r{a}rding type inequality holds for the operator $-\ca(x,t)\De_\ig+\cl$, that is
	\begin{\equ}\label{Garding-inequ}
		\msa_t(\phi,\phi)\geq\frac\de2\|\phi\|_{H^1(M)}^2- \ka |\phi|_2^2 
	\end{\equ}
	where $\ka>0$ is a constant depending continuously on the $C^1$-norm of $\ca$ and on the curvature tensor of $(M,\ig)$ and its covariant derivatives. And, by virtue of the assumption $(A2)$, we find that $u$ is a solution to \eqref{equ-initial-0} if and only if $w(x,t)=e^{-\ga t}u(x,t)$ solves the equation
	\begin{\equ}\label{equivalent linear equ}
		\pa_t w-\ca(x,t)\De_\ig w +\ga w+\cl[w]=f_\ga(x,t):=e^{-\ga t}f(x,t), \quad w(\cdot,0)=0.
	\end{\equ}
	By choosing $\ga=\ka$, we see that
	\begin{\equ}\label{coercive}
		(\msa_t+\ga)(\phi,\phi)=\msa_t(\phi,\phi)+\ga|\phi|_2\geq\frac\de2\|\phi\|_{H^1(M)}^2.
	\end{\equ}
	In the sequel, we will focus on the equivalent problem \eqref{equivalent linear equ}. 
	
	To proceed, let us consider a bilinear form on $HH_{a,0}\times C_c^\infty(M\times(0,+\infty))$:
	\[
	\msp(w,\phi)=\inp{\pa_tw}{\pa_t\phi}_{LH_a^0}+\int_0^\infty e^{-2at}(\msa_t+\ga)(w,\pa_t\phi)dt,
	\]
	and a linear functional on $C_c^\infty(M\times(0,+\infty))$:
	\[
	F(\phi)=\inp{f_\ga}{\pa_t\phi}_{LH_a^0}.
	\]
	We mention here that the parameter $a>0$ will be given later. Clearly, for a fixed $\phi\in C_c^\infty(M\times(0,+\infty))$, the bilinear form $\msp$ is continuous in $w$ with respect to the $HH_a$-norm. And it is evident that $F$ is continuous with with respect to the $HH_a$-norm, given that $f_\ga\in LH_a^0$. 
	
	To see the coerciveness of $\msp$ on $C_c^\infty(M\times(0,+\infty))$, for an arbitrary $\phi\in C_c^\infty(M\times(0,+\infty))$, let us look at
	\[
	\msp(\phi,\phi)=\|\pa_t\phi\|_{LH_a^0}^2+\int_0^\infty e^{-2at}(\msa_t+\ga)(\phi,\pa_t\phi)dt.
	\]
	Let $I$ denote the second term on the right hand side, i.e.,
	\[
	\aligned
	I&=\int_0^\infty e^{-2at}(\msa_t+\ga)(\phi,\pa_t\phi)dt \\
	&=\int_0^\infty e^{-2at}\int_M \big(\ca \nabla_\ig \phi\cdot\nabla_\ig\pa_t\phi + \pa_t\phi\nabla_\ig\ca \cdot\nabla_\ig\phi +\ga\phi\pa_t\phi +\pa_t\phi\cl[\phi]\big)d\vol_\ig dt.
	\endaligned
	\]
	By setting
	\[
	I_1=\int_0^\infty e^{-2at}\int_M \big(\ca \nabla_\ig \phi\cdot\nabla_\ig\pa_t\phi  +\ga\phi\pa_t\phi \big)d\vol_\ig dt
	\]
	and
	\[
	I_2=\int_0^\infty e^{-2at}\int_M \big( \pa_t\phi\nabla_\ig\ca \cdot\nabla_\ig \phi +\pa_t\phi\cl[\phi]\big)d\vol_\ig dt,
	\]
	we soon have $I=I_1+I_2$ and 
	\[
	\aligned
	I_2&\geq -(|\nabla_\ig\ca|_\infty+C)\int_0^\infty e^{-2at}|\pa_t\phi(\cdot,t)|_{2}\|\phi(\cdot,t)\|_{H^1(M)}dt \\
	&\geq - \frac12\|\pa_t\phi\|_{LH_a^0}^2-\frac{(|\nabla_\ig\ca|_\infty+C)^2}2\|\phi\|_{LH_a^1}^2
	\endaligned
	\]
	where $C>0$ is the constant from the assumption $(A1)$ for the linear operator $\cl$ and the last inequality comes from the Young's inequality. On the other hand, it follows from partial integration in time that
	\[
	\aligned
	I_1&=\int_0^\infty e^{-2at}\int_M\Big( a\ca|\nabla_\ig\phi|^2+a\ga|\phi|^2 -\frac12\pa_t\ca|\nabla_\ig\phi|^2\Big)d\vol_\ig dt \\
	&\geq \Big( a\min\{\de,\ga\}-\frac12|\pa_t\ca|_\infty \Big)\|\phi\|_{LH_a^1}.
	\endaligned
	\]
	Thus, by combining the above two estimates, we obtain
	\[
	\msp(\phi,\phi)\geq \frac12\|\pa_t\phi\|_{LH_a^0}^2+\Big( a\min\{\de,\ga\}-\frac12|\pa_t\ca|_\infty -\frac{(|\nabla_\ig\ca|_\infty+C)^2}2\Big)\|\phi\|_{LH_a^1}.
	\]
	And, when $a$ is chosen so large that \[
	a>\frac{|\pa_t\ca|_\infty+(|\nabla_\ig\ca|_\infty+C)^2}{2\min\{\de,\ga\}},
	\]
	we can guarantee that $\msp$ is coercive on $C_c^\infty(M\times(0,+\infty))$ with respect to the $HH_a$-norm. Therefore we may conclude from Lemma \ref{lem:Lax} that there is an unique $w_F\in HH_{a,0}$ for which $\msp(w_F,\phi)=F(\phi)$ for all $\phi\in C_c^\infty(M\times(0,+\infty))$.
	
	We are almost at the stage to deliver the unique weak solution of \eqref{equ-initial-0}. The problem that remains is we have only tested \eqref{equivalent linear equ} against functions with zero average over time, that is, the bilinear form $\msp$ is defined by simply testing \eqref{equivalent linear equ} with the function $\pa_t\phi$. This would normally mean that the $w_F$ obtained above differs from a solution of \eqref{equivalent linear equ} by a time constant function. However, one does not need to worry about this problem. In fact, for any $\va\in C_c^\infty(M\times(0,+\infty))$, we can consider a modified function
	\[
	\va_\tau(x,t)=\va(x,t)-\va(x,t-\tau)
	\]
	where $\tau>0$ is large enough such that the support of the second term does not intersect with that of the first term. Then $\va_\tau$ can be represented as $\pa_t\phi_\tau$ for some $\phi_\tau\in C_c^\infty(M\times(0,+\infty))$, and hence
	\[
	\inp{\pa_tw_F}{\va_\tau}_{LH_a^0}+\int_0^\infty e^{-2at}(\msa_t+\ga)(w_F,\va_\tau)dt=\inp{f_\ga}{\va_\tau}_{LH_a^0}
	\]
	holds for sufficiently large $\tau$. By taking to the limit $\tau\to+\infty$, it follows that
	\[
	\inp{\pa_tw_F}{\va}_{LH_a^0}+\int_0^\infty e^{-2at}(\msa_t+\ga)(w_F,\va)dt=\inp{f_\ga}{\va}_{LH_a^0}.
	\]
	Hence $w_F$ is indeed the unique weak solution of \eqref{equivalent linear equ}, and this is to say, there exists an unique solution $u\in HH_{a+\ga,0}$ of Eq. \eqref{equ-initial-0} provided that $f\in LH_{a+\ga}^0$. 
\end{proof}

As an immediate consequence of Proposition \ref{prop:existence in HH-a}, we have an existence and uniqueness result for more general initial data.

\begin{Cor}\label{cor:existence HH-a}
	Under the hypotheses of Proposition \ref{prop:existence in HH-a}. For every $f\in LH_a^0$ and $u_0\in H^2(M)$, the equation
	\begin{\equ}\label{equ-initial-general}
		\pa_tu-\ca(x,t)\De_\ig u + \cl[u]=f, \quad u(\cdot,0)=u_0
	\end{\equ} 
	has a unique weak solution in $HH_a$, provided $a>a_0$ as in Proposition \ref{prop:existence in HH-a}.
\end{Cor}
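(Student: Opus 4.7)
The plan is to reduce the general initial-value problem to the one with zero initial data that was already handled by Proposition \ref{prop:existence in HH-a}, via the standard device of subtracting off a lift of the initial datum. Concretely, I would pick the reference function $v(x,t):=e^{-t}u_0(x)$. Since $u_0\in H^2(M)$ and $\ca$ is uniformly bounded with bounded derivatives, one has $\|v(\cdot,t)\|_{H^2(M)}=e^{-t}\|u_0\|_{H^2(M)}$ and $\pa_t v=-e^{-t}u_0$, so that $v\in P_a^1\subset HH_a$ and $v(\cdot,0)=u_0$. Writing $w:=u-v$, the problem \eqref{equ-initial-general} is equivalent to
\[
\pa_t w-\ca(x,t)\De_\ig w+\cl[w]=g,\quad w(\cdot,0)=0,
\]
with right-hand side $g:=f-\pa_t v+\ca(x,t)\De_\ig v-\cl[v]$.

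The key check is that $g\in LH_a^0$. We have $f\in LH_a^0$ by hypothesis, while $\pa_t v\in LH_a^0$ and $\ca\,\De_\ig v\in LH_a^0$ follow from the computation above together with the uniform boundedness of $\ca$ and $\De_\ig u_0\in L^2(M)$. Moreover, $v\in LH_a^1$, and assumption (A1) yields $\cl[v]\in LH_a^0$. Thus $g\in LH_a^0$, and Proposition \ref{prop:existence in HH-a} (with the same threshold $a_0$) produces a unique weak solution $w\in HH_{a,0}$. Setting $u:=w+v\in HH_a$ gives the desired weak solution with initial datum $u_0$.

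For uniqueness, suppose $u_1,u_2\in HH_a$ are two weak solutions with identical data. Their difference $\tilde w:=u_1-u_2$ weakly satisfies the homogeneous equation with vanishing initial trace. To conclude $\tilde w\equiv0$ from Proposition \ref{prop:existence in HH-a}, I need $\tilde w\in HH_{a,0}$, i.e.\ that $\tilde w$ can be approximated in the $HH_a$-norm by functions in $C_c^\infty(M\times(0,+\infty))$. This is a standard density statement: a smooth temporal cutoff $\chi_R$ supported in $[1/R,R]$ combined with mollification in time yields such approximants, where the exponential weight $e^{-2at}$ takes care of the tail at infinity and the vanishing trace at $t=0$ takes care of the approximation near $t=0$. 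I expect this density step to be the only non-routine point of the argument; it is mild and follows from the continuous embedding $HH_a\hookrightarrow C([0,\infty);H^{1/2}(M))$ provided by the Lions--Magenes trace theorem, which gives a well-defined initial trace for any element of $HH_a$ and makes the notion of a weak solution unambiguous.
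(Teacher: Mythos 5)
Your proposal is correct and follows essentially the same route as the paper: the same lift $e^{-t}u_0$ of the initial datum and the same reduction to the zero-data problem of Proposition \ref{prop:existence in HH-a}. The only (harmless) deviations are that you invoke (A1) where the paper uses (A3) to see $\cl[e^{-t}u_0]\in LH_a^0$, and that you are more explicit than the paper about why uniqueness in $HH_a$ reduces to uniqueness in $HH_{a,0}$.
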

\begin{proof}
	Let us set $\tilde u_0(x,t)=e^{-t}u_0(x)$ and $v=u-\tilde u_0$. Then we shall consider the equation
	\begin{\equ}\label{modified general equ}
		\pa_t v-\ca(x,t)\De_\ig v+\cl[v]=f+\tilde u_0+\ca(x,t)\De_\ig \tilde u_0-\cl[\tilde u_0], \quad v(\cdot,0)=0.
	\end{\equ}
	Since $u_0\in H^2(M)$, we soon have  $\tilde u_0+\ca \De_\ig\tilde u_0\in LH_a^0$. Note also that $\tilde u_0\in P_a^1$, by virtue of the assumption $(A3)$ for $\cl$, we have $\cl[\tilde u_0]\in P_a^1(M)\hookrightarrow P_a^0(M)=LH_a^0$. Hence, Proposition \ref{prop:existence in HH-a} implies that for all $a>0$ large enough, there is a unique $v\in HH_{a,0}$ solving Eq. \eqref{modified general equ}, which is equivalent to say that there is an unique $u\in HH_a$ solving \eqref{equ-initial-general}.
\end{proof}

Let us mention here that the above existence result is less than optimal as the natural class of the initial data is $H^1(M)$. However, these limitations will be addressed and improved by the estimates proven in our upcoming work. Specifically, our next focus is on the regularity of the weak solution that has been obtained above. In essence, we aim to establish two main results: firstly, the weak solution exhibits a level of smoothness determined by the homogeneous term $f$ and the initial value $u_0$; and secondly, the relationship between the solution $u$ and the homogeneous term can be understood as an isomorphism within suitably defined Banach spaces.

\begin{Lem}\label{lem:regularity1}
	For $a\geq\ka+\frac12$, if $u\in HH_a$ is a weak solution of \eqref{equ-initial-general} with $f\in LH_a^0$ and $u_0\in L^2(M)$, then the following estimate holds
	\[
	\|u\|_{LH_a^1}^2\leq \frac1\de\big( |u_0|_2^2+\|f\|_{LH_a^0}^2 \big).
	\]
	where $\ka>0$ is given in \eqref{Garding-inequ} and $\de>0$ is the lower bound of the function $\ca$.
\end{Lem}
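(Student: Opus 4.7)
The plan is to derive the estimate by (formally) testing the weak formulation \eqref{def-weak-solu-linear} against $u$ itself, applying the G\r{a}rding inequality \eqref{Garding-inequ}, and absorbing the lower-order terms using the weight threshold $a \geq \ka + \frac{1}{2}$.

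First I would insert $\phi = u$ into \eqref{def-weak-solu-linear} to obtain the identity
\[
\inp{\pa_t u}{u}_{LH_a^0} + \int_0^\infty e^{-2at}\, \msa_t(u,u)\, dt = \inp{f}{u}_{LH_a^0}.
\]
Since $u\in HH_a$ is neither compactly supported in time nor an admissible test function, this step requires a density argument: approximate $u$ by a sequence $u_n\in C_c^\infty(M\times(0,+\infty))$ in the $HH_a$-norm (using a time cutoff near $t=0$ and $t=\infty$ together with mollification in $t$), insert $u_n$ into \eqref{def-weak-solu-linear}, and pass to the limit using $\pa_t u\in LH_a^0$ and the continuity of $\msa_t$ on $H^1(M)\times H^1(M)$.

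Next, the time-derivative term rewrites, via integration by parts in $t$ and the vanishing of the boundary term at infinity inherent in membership in $HH_a$, as
\[
\inp{\pa_t u}{u}_{LH_a^0} = \frac12\int_0^\infty e^{-2at}\frac{d}{dt}|u(\cdot,t)|_2^2\, dt = -\frac12|u_0|_2^2 + a\|u\|_{LH_a^0}^2.
\]
Applying the G\r{a}rding inequality \eqref{Garding-inequ} pointwise in $t$ and estimating the right-hand side by the Young inequality $\inp{f}{u}_{LH_a^0}\leq \frac12\|f\|_{LH_a^0}^2 + \frac12\|u\|_{LH_a^0}^2$, I combine everything to reach
\[
\frac{\de}{2}\|u\|_{LH_a^1}^2 + \Big(a - \ka - \frac12\Big)\|u\|_{LH_a^0}^2 \leq \frac12|u_0|_2^2 + \frac12\|f\|_{LH_a^0}^2.
\]
The hypothesis $a\geq \ka+\frac12$ makes the second term on the left nonnegative, so it can be discarded, yielding $\|u\|_{LH_a^1}^2 \leq \de^{-1}\bigl(|u_0|_2^2 + \|f\|_{LH_a^0}^2\bigr)$, which is the desired bound.

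The main obstacle is the very first step: since the weak formulation \eqref{def-weak-solu-linear} only tests against $\phi\in C_c^\infty(M\times(0,+\infty))$, plugging in $u$ itself is not a priori legitimate. The careful approximation sketched above (time cutoffs plus mollification, combined with the control on $\pa_t u$ provided by $u\in HH_a$ and with assumption $(A1)$ for $\cl$) is what makes the formal computation rigorous; once this is justified, the remainder of the argument is a routine energy estimate.
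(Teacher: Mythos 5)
Your proposal is correct and follows essentially the same route as the paper's proof: test the weak formulation against $u$ itself (justified by an approximation argument — the paper uses a time cutoff multiplier $\chi_\vr\cdot u$ and lets $\vr\searrow0$), rewrite $\inp{\pa_t u}{u}_{LH_a^0}=a\|u\|_{LH_a^0}^2-\frac12|u_0|_2^2$, and combine the G\r{a}rding inequality with Young's inequality and the condition $a\geq\ka+\frac12$ to absorb the $L^2$ term. The intermediate inequality you reach is identical to the one in the paper, so no further comment is needed.
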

\begin{proof}
	Since $u\in HH_a$ is a weak solution of \eqref{equ-initial-general}, by using $u$ itself as a test function, we find that
	\begin{\equ}\label{test1}
		\inp{\pa_t u}{u}_{LH_a^0}+\int_0^\infty e^{-2at}\msa_t(u,u)dt=\inp{f}{u}_{LH_a^0}.
	\end{\equ}
	Although one should use a compactly supported smooth function to test \eqref{equ-initial-general}, 
	the above identity is still possible since the density of $C_c^\infty(M\times(0,+\infty))$ in $HH_{a,0}$  suggests us to use $\chi_\vr\cdot u\in HH_{a,0}$ as a test function, where $\chi_\vr$ is a smooth cut-off multiplier given by $\chi_\vr(t)=\chi(t/\vr)$ with
	\[
	\chi(t)=\begin{cases}
		0, & t\leq 1,\\
		1, & t\geq2.
	\end{cases}
	\] 
	Then, by taking to the limit $\vr\searrow0$, one easily obtain \eqref{test1}.

	Notice that, for any $v\in HH_a$ with initial value $v(\cdot,0)\in L^2(M)$, we have the identity
	\begin{\equ}\label{identity1}
		\aligned
		\inp{\pa_t v}{v}_{LH_a^0}&=\int_0^\infty e^{-2at}\int_M v\pa_tv d\vol_\ig dt\\
		&=a\int_0^\infty e^{-2at}\int_M v^2 d\vol_\ig dt -\frac12 \int_M v(\cdot,0)^2d\vol_\ig \\
		&=a\|v\|_{LH_a^0}^2-\frac12|v(\cdot,0)|_2^2.
		\endaligned
	\end{\equ}
	Hence, by \eqref{test1}, \eqref{Garding-inequ} and Young's inequality, we soon deduce that
	\[
	\frac\de2\|u\|_{LH_a^1}^2+\Big( a-\ka-\frac12 \Big)\|u\|_{LH_a^0}^2\leq \frac12|u_0|_2^2+\frac12\|f\|_{LH_a^0}^2.
	\]
	Therefore, by choosing $a\geq \ka+\frac12$, we obtain the conclusion.
\end{proof}

The above estimate can be further improved to get higher regularity. In fact, we have the following $P_a^l$-version of the usual a priori energy estimate for parabolic
equations, whose proof is postponed to the Appendix \ref{A. linear regualrity}
\begin{Lem}\label{lem:regularity3}
	For $l\in\N\cup\{0\}$, if $u\in P_a^l$ is a weak solution of \eqref{equ-initial-general} with $f\in P_a^l$ and $u_0\in H^{2l+1}(M)$, then $u\in P_a^{l+1}$ and
	\[
	\|u\|_{P_a^{l+1}}^2\leq C\big( \|u_0\|_{H^{2l+1}(M)}^2+\|f\|_{P_a^l}^2 \big)
	\]
	provided $a>0$ is large enough in the sense of Proposition \ref{prop:existence in HH-a} and Lemma \ref{lem:regularity1}, where $C>0$ depends only on $l,\ca,\cl,\nabla_\ig\ca,\pa_t\ca,\dots,\pa_t^l\ca$ and the manifold $M$.
\end{Lem}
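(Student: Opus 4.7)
I would prove the estimate by induction on $l$, trading one time derivative for two spatial derivatives at each level using the algebraic structure of the equation together with elliptic $L^2$-regularity on each time slice.

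For the base case $l=0$, the weak solution supplied by Corollary~\ref{cor:existence HH-a} already satisfies $\pa_t u\in LH_a^0$ and $u\in LH_a^1$. I would test the equation with $\pa_t u$, justified by the smooth cut-off approximation $\chi_\vr\cdot u$ already used in the proof of Lemma~\ref{lem:regularity1}; the resulting energy identity, combined with the G\r{a}rding inequality \eqref{Garding-inequ}, assumption (A1), and Young's inequality, yields
\[
\|\pa_t u\|_{LH_a^0}^2\lesssim \|u_0\|_{H^1(M)}^2+\|f\|_{LH_a^0}^2.
\]
Rearranging the equation as $\ca(x,t)\De_\ig u=\pa_t u+\cl[u]-f$, the right-hand side belongs to $LH_a^0$; the uniform positivity of $\ca$ and standard elliptic $L^2$-regularity on the closed manifold $M$ then promote $u$ to $LH_a^2$, so $u\in P_a^1$ with the asserted estimate.

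For the inductive step, assume the claim for all indices below $l$, and let $u\in P_a^l$ solve \eqref{equ-initial-general} with $f\in P_a^l$ and $u_0\in H^{2l+1}(M)$. Setting $v:=\pa_t u$ and differentiating the equation in time (made rigorous by difference quotients in $t$, legitimate since $u\in P_a^l$), $v$ satisfies
\[
\pa_t v-\ca\De_\ig v+\cl[v]=\pa_t f+(\pa_t\ca)\De_\ig u-\bigl(\pa_t\cl[u]-\cl[\pa_t u]\bigr),
\]
with initial datum $v(\cdot,0)=\ca(\cdot,0)\De_\ig u_0-\cl[u_0](\cdot,0)+f(\cdot,0)$. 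Assumption (A3) applied to both $u\in P_a^l$ and $\pa_t u\in P_a^{l-1}$ places $\pa_t\cl[u]$ and $\cl[\pa_t u]$ in $P_a^{l-1}$; the smoothness of $\ca$ together with $u\in P_a^l$ gives $(\pa_t\ca)\De_\ig u\in P_a^{l-1}$; and $\pa_t f\in P_a^{l-1}$ by hypothesis. Meanwhile, $u_0\in H^{2l+1}$ and the lower-order nature of $\cl$ ensure $v(\cdot,0)\in H^{2l-1}$. The inductive hypothesis then delivers $v\in P_a^l$, which is exactly $\pa_t^j u\in LH_a^{2(l+1-j)}$ for $j=1,\dots,l+1$ with the required bound. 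Finally, reading the original equation as $\ca\De_\ig u=\pa_t u+\cl[u]-f$, the right-hand side now belongs to $LH_a^{2l}$, and elliptic regularity on each time slice upgrades $u$ to $LH_a^{2l+2}$, completing $u\in P_a^{l+1}$.

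The main obstacle I anticipate is bookkeeping the interplay of $\pa_t$ and $\cl$: (A3) guarantees $\cl$ preserves each $P_a^j$ but supplies no explicit commutator identity, so the pair $\pa_t\cl[u]-\cl[\pa_t u]$ must be treated as a genuine remainder living in $P_a^{l-1}$, which is precisely what the argument above extracts from (A3). A secondary subtlety is the Sobolev class of the trace $v(\cdot,0)$: two spatial derivatives of $u_0$ are consumed in producing $\De_\ig u_0$, and this is exactly why the hypothesis on the initial datum carries the index $2l+1$ rather than $2l$ or $2l+2$.
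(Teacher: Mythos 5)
Your argument is correct in outline but follows a genuinely different route from the paper's. The paper obtains the spatial regularity first and directly: it tests the equation with spatial difference quotients of order $2l+k$ (localized by a cut-off $\eta$ in coordinate charts), which yields $u\in LH_a^{2l+1}$ and then $u\in LH_a^{2l+2}$ in two passes ($k=0$ and $k=1$), and only afterwards recovers the time derivatives by reading $\pa_t^{s+1}u=\pa_t^{s}\big(f+\ca\De_\ig u-\cl[u]\big)$ off the equation and iterating over $s=0,\dots,l$. You instead induct on $l$ by differentiating the equation once in time and applying the lower-level estimate to $v=\pa_t u$, recovering the top spatial regularity at the end by slice-wise elliptic theory. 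Your route buys a cleaner induction, at the cost of having to control the commutator $\pa_t(\cl[u])-\cl[\pa_t u]$ and the trace $v(\cdot,0)$; both are fine, since (A3) applied to $u\in P_a^l$ and to $\pa_t u\in P_a^{l-1}$ places the commutator in $P_a^{l-1}$, and the trace embedding $L^2_tH^{2l}\cap H^1_tH^{2l-2}\hookrightarrow C^0_tH^{2l-1}$ gives $f(\cdot,0),\,\cl[u](\cdot,0)\in H^{2l-1}(M)$ — a step you should state explicitly. The paper's route avoids differentiating the weak formulation in time but pays with heavier difference-quotient bookkeeping.

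Two caveats. First, in the base case you cannot literally test with $\pa_t u$: the term $\int_M\ca\,\nabla_\ig u\cdot\nabla_\ig\pa_t u\,d\vol_\ig$ is undefined for a mere $HH_a$ solution, and the cut-off $\chi_\vr\cdot u$ from Lemma \ref{lem:regularity1} only repairs the behaviour near $t=0$, not the missing spatial gradient of $\pa_t u$. Either regularize in time (Steklov averages or time difference quotients) or, more simply, argue in the paper's order: establish $u\in LH_a^2$ first via first-order spatial difference quotients (Lemma \ref{lem:regularity2}) and then read $\pa_t u=f+\ca\De_\ig u-\cl[u]\in LH_a^0$ pointwise from the equation. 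Second, the bound the inductive hypothesis returns for $v$ involves $\|u\|_{P_a^l}$ (through the commutator, through $(\pa_t\ca)\De_\ig u$, and through the trace of $\cl[u]$); to close the estimate in terms of the data alone you must also invoke the inductive hypothesis for $u$ itself at level $l-1$, giving $\|u\|_{P_a^l}^2\leq C\big(\|u_0\|_{H^{2l-1}(M)}^2+\|f\|_{P_a^{l-1}}^2\big)$. This is implicit in your phrase ``with the required bound'' but should be made explicit.
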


The estimates we have just derived can now be utilized to yield the following global existence result, which is an analogue of \cite[Theorem 2.3.5]{Polden}. 

\begin{Thm}\label{thm-mff-isomorphism}
	If $a>0$ is large enough, then for every $l\in\N\cup\{0\}$ the linear map
	\begin{\equ}\label{the map mff}
		\mff[u]=\big(u_0,\pa_tu-\ca\De_\ig u +\cl[u]\big)
	\end{\equ}
	where $u_0=u(\cdot,0)$, is an isomorphism of $P_a^{l+1}$ onto $H^{2l+1}(M)\times P_a^l$.
\end{Thm}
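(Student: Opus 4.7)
Since $\mff$ is manifestly linear, the claim reduces to verifying boundedness, injectivity, surjectivity, and continuity of the inverse, with $a$ chosen large enough that Proposition \ref{prop:existence in HH-a} and Lemma \ref{lem:regularity3} are simultaneously applicable. For boundedness, if $u\in P_a^{l+1}$ then $\pa_t u\in P_a^l$ by definition, $\ca\Delta_\ig u\in P_a^l$ by smoothness and uniform boundedness of $\ca$, and $\cl[u]\in P_a^l$ by assumption (A3). The non-trivial ingredient is the trace estimate $\|u(\cdot,0)\|_{H^{2l+1}(M)}\leq C\|u\|_{P_a^{l+1}}$, which I would prove by expanding in a Laplace eigenbasis $u(x,t)=\sum_k c_k(t)e_k(x)$ with eigenvalues $\lm_k$. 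Integration by parts in time on $\int_0^\infty \frac{d}{dt}(e^{-2at}|c_k|^2)\,dt = -|c_k(0)|^2$ yields the pointwise-in-$k$ identity
\[
|c_k(0)|^2 = 2a\int_0^\infty e^{-2at}|c_k|^2\,dt - 2\int_0^\infty e^{-2at}\real(c_k\overline{c_k'})\,dt,
\]
and splitting the weight $(1+\lm_k)^{2l+1}$ as $(1+\lm_k)^{l+1}\cdot(1+\lm_k)^l$ via Cauchy-Schwarz produces, after summation over $k$, a bound by $\|u\|_{LH_a^{2(l+1)}}^2+\|\pa_t u\|_{LH_a^{2l}}^2\leq\|u\|_{P_a^{l+1}}^2$.

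Injectivity is immediate from Lemma \ref{lem:regularity1}: if $\mff[u]=0$ then $u\in HH_a$ is a weak solution of the homogeneous equation with zero initial data, whence $\|u\|_{LH_a^1}=0$ and $u\equiv 0$.

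For surjectivity, given $(u_0,f)\in H^{2l+1}(M)\times P_a^l$, I would first construct a right inverse of the trace via the spectral formula $\tilde u_0(x,t):=\sum_k c_k e^{-\lm_k t}e_k(x)$ with $u_0=\sum_k c_k e_k$. A direct calculation reducing each norm $\|\pa_t^j\tilde u_0\|_{LH_a^{2(l+1-j)}}^2$ to $\sum_k (1+\lm_k)^{2(l+1-j)}\lm_k^{2j}|c_k|^2/(2(a+\lm_k))$, and thence to a multiple of $\|u_0\|_{H^{2l+1}(M)}^2$ (the gain of one $\lm_k$ from the denominator supplying the missing half-derivative), verifies $\tilde u_0\in P_a^{l+1}$ with $\tilde u_0(\cdot,0)=u_0$. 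Setting $v:=u-\tilde u_0$ reduces the problem to finding $v\in P_a^{l+1}$ with $v(\cdot,0)=0$ and
\[
\pa_t v - \ca\Delta_\ig v + \cl[v] = g, \qquad g:=f-\pa_t\tilde u_0+\ca\Delta_\ig \tilde u_0 - \cl[\tilde u_0]\in P_a^l,
\]
where $g\in P_a^l$ uses (A3). Proposition \ref{prop:existence in HH-a} then yields a weak solution $v\in HH_{a,0}\hookrightarrow P_a^0$, and an iterated application of Lemma \ref{lem:regularity3} with parameter running through $0,1,\dots,l$ (each step legitimate since $P_a^l\hookrightarrow P_a^{l-1}$ and zero lies in every Sobolev space) lifts $v$ to $P_a^{l+1}$. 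Tracking constants through this finite bootstrap delivers the a priori inequality $\|u\|_{P_a^{l+1}}\leq C(\|u_0\|_{H^{2l+1}(M)}+\|f\|_{P_a^l})$, which simultaneously certifies surjectivity and continuity of $\mff^{-1}$.

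The main obstacle is ensuring that the exponential weight $e^{-2at}$ cooperates with both the trace theorem and the spectral reconstruction of $\tilde u_0$: both arguments require uniform-in-$\lm_k$ estimates in which the interaction of the time weight with the eigenvalue summations provides precisely the cancellation needed for convergence. Once this technical balancing is in hand, the remainder of the proof is structural and follows cleanly from the existence and regularity theory developed earlier in the section.
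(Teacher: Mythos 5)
Your proposal is correct, and while its skeleton (boundedness of $\mff$ plus invertibility via the existence and regularity theory of Section 4) matches the paper's, the two technical ingredients are handled by a genuinely different, more self-contained route. For the trace bound $\|u(\cdot,0)\|_{H^{2l+1}(M)}\leq C\|u\|_{P_a^{l+1}}$ the paper simply defers to Polden's argument, whereas you prove it directly by the eigenbasis expansion and the time integration by parts; your Cauchy--Schwarz splitting of the weight $(1+\lm_k)^{2l+1}$ is exactly the right bookkeeping and closes with the two norms $\|u\|_{LH_a^{2(l+1)}}$ and $\|\pa_t u\|_{LH_a^{2l}}$ that $P_a^{l+1}$ controls. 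More substantially, for surjectivity the paper lifts the initial datum by the crude extension $e^{-t}u_0(x)$, which costs a full extra derivative (hence Corollary \ref{cor:existence HH-a} needs $u_0\in H^2(M)$ when $l=0$), and it must then close the resulting ``loophole'' by approximating $u_0\in H^1(M)$ with smooth data and passing to the limit using the a priori estimate of Lemma \ref{lem:regularity3}. Your heat-type extension $\tilde u_0=\sum_k c_k e^{-\lm_k t}e_k$ is the sharp lifting: the factor $(2(a+\lm_k))^{-1}$ coming from the $t$-integration supplies precisely the missing half-derivative, so $\tilde u_0\in P_a^{l+1}$ for $u_0\in H^{2l+1}(M)$ and the density argument becomes unnecessary. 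The remaining steps (reduction to zero initial data, Proposition \ref{prop:existence in HH-a}, the bootstrap through Lemma \ref{lem:regularity3} for $0,1,\dots,l$, and injectivity plus continuity of the inverse from Lemmas \ref{lem:regularity1} and \ref{lem:regularity3}) coincide with the paper's. In short, your argument buys a cleaner, quantitatively sharp proof of the two trace/extension estimates at the price of invoking the spectral decomposition of $\De_\ig$, while the paper trades sharpness for brevity and repairs the loss by approximation.
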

\begin{proof}
	Since $\mff$ is linear, the continuity follows immediately from the boundedness of $\mff$. The bound for the second component is much clearer since we have
	\[
	\aligned
	\|\pa_tu-\ca\De_\ig u +\cl[u]\|_{P_a^l}&\leq\|\pa_tu\|_{P_a^l}+\|\ca\De_\ig u\|_{P_a^l}+\|\cl[u]\|_{P_a^l} \\
	&\leq C\|u\|_{P_a^{l+1}} + \|\cl[u]\|_{P_a^{l+1}} \\
	&\leq C\|u\|_{P_a^{l+1}}
	\endaligned
	\]
	where we have used the assumption (A3) for $\cl$ in the last inequality. Meanwhile, the continuity of the first component of $\mff$ follows directly from the argument in \cite{Polden}.
	
	We next need to show that $\mff$ has an inverse. This is suffices to show that the equation $\mff[u]=(u_0,f)$, for arbitrarily given $u_0$ and $f$, is uniquely solvable in the appropriate class. However, this
	was exactly the conclusion of the existence results Proposition \ref{prop:existence in HH-a}, Corollary \ref{cor:existence HH-a} and the
	regularity theory culminating in Lemma \ref{lem:regularity3}. 
	
	The only remaining loophole is that, when $l=0$, the initial data in Corollary \ref{cor:existence HH-a} was assumed to be in $H^2(M)$. In order to close this, for an arbitrary $f\in P_a^0$, let $u_0\in H^1(M)$ and let $\{u_0^{(j)}\}_{j=1}^\infty$ be a sequence of smooth functions on $M$ converging to $u_0$ with respect to the $H^1$-norm. For each $j$, Corollary \ref{cor:existence HH-a} returns a function $u^{(j)}$ in $P_a^1$ which solves the equation $\mff[u^{(j)}]=(u_0^{(j)},f)$. Then, by virtue of Lemma \ref{lem:regularity3} with $l=0$, we have $\{u^{(j)}\}$ converge in $P_a^1$ to a limit function $u$. And, by the continuity of $\mff$, we have $\mff[u]=(u_0,f)$, i.e. the inverse of $\mff$ exists.
	
	Note that the estimate in Lemma \ref{lem:regularity3} also implies the inverse of $\mff$ is continuous, we therefore conclude that $\mff:P_a^{l+1}\to H^{2l+1}(M)\times P_a^l$ is an isomorphism.
\end{proof}

%
%
%
%
%

\section{An initial-value problem for the Einstein-Dirac equation}\label{sec:ED flow}

\subsection{Set up of the problem}\label{subsec:set-up-of-the-problem}

Recall that we denote $L_\ig=-c_m\De_\ig+\scal_\ig$ the conformal Laplace operator on $(M,\ig)$ with $c_m=\frac{4(m-1)}{m-2}$. For a conformal metric $\ig^u=u^{\frac4{m-2}}\ig$, we note that the scalar curvature of $\ig^u$ is related to the scalar curvature of $\ig$ by
\[
\scal_{\ig^u}=u^{-\frac{m+2}{m-2}}L_\ig u.
\]
Hence for $\ig^{uv}=v^{\frac4{m-2}}\ig^u=(uv)^{\frac4{m-2}}\ig$, by letting $w=uv$, we obtain
\[
\scal_{\ig^{uv}}=v^{-\frac{m+2}{m-2}}L_{\ig^u}v=w^{-\frac{m+2}{m-2}}L_\ig w.
\]
and this recovers the conformal covariance
\begin{\equ}\label{conformal covariance of Yamabe operator}
	L_{\ig^u}v= u^{-\frac{m+2}{m-2}}L_\ig (uv).
\end{\equ}




Given a metric $\ig(0)=\ig^{u_0}\in[\ig]$, where $u_0\in\cc_+^k$ is such that the Dirac operator $D_{\ig(0)}$ has a simple eigenvalue $\lm(u_0)\neq0$. By virtue of Section \ref{sec:eigenvalues}, we can assume there is an open neighborhood $\cu_0$ of $u_0$ in $\cc_+^k$ such that the simple Dirac eigenvalue $\lm:\cu_0\to(0,+\infty)$, $u\mapsto \lm_u:=\lm(u)$ is well-defined. Then 
\[
\ce:=\Big\{(u,\psi)\in \cu_0\times L^2(M,\mbs(M)) :\, \psi\in\ker\big( D_\ig-\lm_u u^{\frac2{m-2}} \big)  \Big\}  
\]
gives rise to a smooth line bundle over $\cu_0$ equipped with the $(\cdot,\cdot)_2^u$-metric at each $u$. Building upon this, we can evolve $\ig(0)$ through a family of conformal metrics $\ig(t)=\ig^{u(\cdot,t)}$, $t\geq0$, constrained on $\ce$ by considering the flow equation
\begin{\equ}\label{ED-flow}
	\frac{\pa u^{\frac{m+2}{m-2}}}{\pa t}=-\frac{m+2}{m-2}\bigg[ L_{\ig}u-\bigg(\frac{\int_M uL_{\ig}u\,d\vol_{\ig}}{\int_M u^{\frac2{m-2}}|\psi|_\ig^2d\vol_\ig}\bigg) |\psi|_{\ig}^2 u^{\frac{4-m}{m-2}} \bigg]
\end{\equ}
with $(u(\cdot,t),\psi(\cdot,t))\in\ce$. We shall refer this equation as the {\it Einstein-Dirac flow}. 

Evidently, by setting
\[
\psi_u=\frac{\psi}{\sqrt{\int_M u^{\frac2{m-2}}|\psi|_\ig^2d\vol_\ig}} 
\]
for $(u,\psi)\in\ce$, we have $\psi_u$ is $(\cdot,\cdot)_2^u$-normalized in the sense that $(\psi_u,\psi_u)_2^u\equiv1$ and \eqref{ED-flow} can be simplified to the following parabolic type equation
\begin{\equ}\label{the-parabolic-evo}
	\frac{\pa u^{\frac{m+2}{m-2}}}{\pa t}=-\frac{m+2}{m-2}\Big[ L_{\ig}u-\Big(\int_M uL_{\ig}u\,d\vol_{\ig}\Big) |\psi_u|_{\ig}^2 u^{\frac{4-m}{m-2}} \Big].
\end{\equ}

Notice that the map $t\mapsto\ig(t)$ preserves the conformal structure, by denoting
\[
\eta_u(\cdot,t)=-\frac{4}{m-2}\Big[ \scal_{\ig(t)}-\Big(\int_M u(\cdot,t)L_{\ig}u(\cdot,t)\,d\vol_{\ig}\Big) |\psi_u(t)|_{\ig}^2 u(\cdot,t)^{-\frac{2(m-1)}{m-2}} \Big],
\]
we can equivalently transform \eqref{the-parabolic-evo} into 
\[
	\pa_tu=\frac{m-2}4\eta_uu
\]
or the evolution of $\ig$ as
\[
	\pa_t\ig(t)=\eta_u\ig(t).
\]

Clearly, the flow problem \eqref{the-parabolic-evo} leads us to consider a quasilinear parabolic scalar equation, but with an unconventional term involving a concomitant eigenspinor. It will surprise nobody that such an equation still admit short-time (or even long-time) solutions. Nevertheless, this does not belong to the standard theory and requires proof. 

\subsection{The local well-posedness}

We construct now the short-time solution to the Einstein-Dirac flow \eqref{the-parabolic-evo} with initial data  $(u_0,\psi_0)\in\ce$. Our main result in this subsection is as follows. 

\begin{Prop}\label{prop:local-well-posedness}
There exists $T>0$ such that the Einstein-Dirac flow  with above initial data  $(u_0,\psi_0)\in\ce$ has a unique smooth solution in the time interval $[0,T)$.
\end{Prop}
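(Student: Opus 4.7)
The plan is to recast the Einstein-Dirac flow as a closed quasilinear nonlocal parabolic equation in $u$ alone, and then to solve it by a Banach fixed-point argument built on top of the linear isomorphism from Theorem \ref{thm-mff-isomorphism}. Expanding the time derivative in \eqref{the-parabolic-evo} yields
\[
\pa_t u = c_m u^{-\frac{4}{m-2}} \De_\ig u \;-\; u^{-\frac{4}{m-2}} \scal_\ig\, u \;+\; C(u)\, |\psi_u|_\ig^2\, u^{-\frac{m}{m-2}},
\]
where $C(u)(t) := \int_M u(\cdot,t) L_\ig u(\cdot,t)\, d\vol_\ig$. Since $\lm_0\neq 0$ is simple, Lemma \ref{lem:Ck-perturbed-eigenvalue} associates to any $u$ in a $C^k$-neighbourhood of $u_0$ a unique normalised eigenspinor $\psi_u$ depending smoothly on $u$, with $\psi_u\big|_{u=u_0}$ equal to the normalisation of $\psi_0$; this closes the equation in the single unknown $u$.

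For the fixed-point scheme, fix $l$ large and $a$ as in Theorem \ref{thm-mff-isomorphism}, let $\tilde u_0(x,t)$ be a smooth extension of $u_0$ to $[0,\infty)$, and let $B_\rho$ be a small closed ball around $\tilde u_0$ in $P_a^{l+1}$ whose elements $\bar u$ share the value $u_0$ at $t=0$, are uniformly positive, and by openness in Lemma \ref{lem:open-dense} stay in $\cu_{\ig,n_0}^k$ for every $t$, so that $\lm_{\bar u(\cdot,t)}$ remains a simple eigenvalue close to $\lm_0$. For such $\bar u$ define
\[
\ca(x,t) := c_m \bar u^{-\frac{4}{m-2}}, \qquad \cl[v](x,t) := \bar u^{-\frac{4}{m-2}}\, \scal_\ig\, v,
\]
together with the source $f(x,t) := C(\bar u)\, |\psi_{\bar u}|_\ig^2\, \bar u^{-m/(m-2)}$. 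Positivity of $\bar u$ makes $\ca$ uniformly positive; $\cl$ is multiplication by a smooth bounded function and hence satisfies the standing assumptions (A1)--(A3); and the first-order formulas at the end of Lemma \ref{lem:Ck-perturbed-eigenvalue} place $f$ in $P_a^l$. Theorem \ref{thm-mff-isomorphism} then produces a unique $u = T(\bar u) \in P_a^{l+1}$ with $u(\cdot,0)=u_0$.

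It then remains to show that, after shrinking $T$ and $\rho$, the map $T$ sends $B_\rho$ into itself and is a contraction. This reduces to a Lipschitz estimate in $\bar u$ for the triple $(\ca, \cl, f)$, which follows from the explicit derivative formulas for $\lm_{\bar u}$ and $\psi_{\bar u}$ in Lemma \ref{lem:Ck-perturbed-eigenvalue} and the elementary Lipschitz dependence of $C(\bar u)$ on $\bar u$. Feeding differences $T(\bar u_1)-T(\bar u_2)$ into the quantitative bound of Lemma \ref{lem:regularity3} produces contraction once $T$ is small enough. A Banach fixed point then gives a local solution $u$ to the flow; positivity is preserved on a possibly smaller interval by continuity in time, uniqueness is built into the contraction, and $C^\infty$-smoothness follows by bootstrapping the index $l$ via Lemma \ref{lem:regularity3}.

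The main obstacle I anticipate is not the linear theory itself but organising the nonlocal contributions so that they act as genuine lower-order perturbations in the $P_a^{l+1}$ scale. In particular, $\psi_{\bar u}$ is defined only while $\lm_{\bar u}$ remains a simple eigenvalue, so the smallness of $T$ and $\rho$ must simultaneously guarantee the parabolic contraction estimate and preserve the Dirac spectral gap around $\lm_0$. A secondary technical point is that Section \ref{sec:nonlocal linear equ} assumes $\ca$ smooth with uniformly bounded derivatives, whereas iterates $\bar u \in P_a^{l+1}$ are only finitely regular; this is handled either by smooth approximation followed by a limit argument, or by a direct check that Proposition \ref{prop:existence in HH-a} and Lemma \ref{lem:regularity3} extend to coefficients in the relevant Sobolev class.
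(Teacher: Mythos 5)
Your reduction of the flow to a single quasilinear nonlocal parabolic equation for $u$, the use of Lemma \ref{lem:Ck-perturbed-eigenvalue} to treat $\psi_u$ as a smooth functional of $u$ on a neighbourhood where $\lm_u$ stays simple, and the reliance on the linear theory of Section \ref{sec:nonlocal linear equ} all match the paper. Where you diverge is the passage from the linear problem to the nonlinear one: you propose a frozen-coefficient Picard iteration and a Banach fixed point, whereas the paper applies the inverse function theorem to $\msf[v]=(v_0,\pa_tv-\cq_*[v])$ at a reference function $u_*$ whose time-jet $a_0,\dots,a_{l-1}$ is chosen recursively so that the residual $f_*$ vanishes to order $l-1$ at $t=0$, and then produces an exact solution by time-shifting $f_*$ and invoking the local diffeomorphism property of $\msf$. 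Uniqueness is proved there by a separate Gronwall energy inequality for $\int_M(|\nabla_\ig w|^2+|w|^2)\,d\vol_\ig$.

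The genuine gap in your plan is the sentence claiming that Lemma \ref{lem:regularity3} ``produces contraction once $T$ is small enough.'' The constant in Lemma \ref{lem:regularity3} (and in Lemma \ref{lem:mff-isomorphism-PlT}) comes from a global weighted-in-time estimate and does not improve as $T\to 0$, so the contraction factor must be extracted from the source difference. Writing $w$ for the difference of two iterates, the source contains the top-order term $(\ca_1-\ca_2)\De_\ig(\text{second iterate})$, whose $P^l(T)$-norm is of size $\|\bar u_1-\bar u_2\|_{P^{l+1}(T)}$ multiplied by the full $P^{l+1}(T)$-norm of the iterate (of order $\|u_0\|_{H^{2l+1}(M)}$, not small), and the highest space-time derivatives of $\ca_1-\ca_2$ carry no factor of $T$ even though $\bar u_1(\cdot,0)=\bar u_2(\cdot,0)$, since the time derivatives of the difference need not vanish at $t=0$ and the trace constants of $P^{l+1}(T)$ degenerate as $T\to0$. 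This is the classical loss-of-derivatives obstruction to naive Picard iteration for quasilinear parabolic equations, and it is exactly why Polden, Mantegazza--Martinazzi, and this paper take the inverse-function-theorem route, which needs only invertibility of $D\msf$ at one point rather than a uniform contraction. Your scheme can be repaired by contracting in a weaker norm (say $C([0,T],L^2(M))$ or $L^2([0,T],H^1(M))$) on a ball that is bounded in $P^{l+1}(T)$ and closed under weak limits, but that is a different argument from the one you wrote and must be supplied. Two further points: the fixed point yields uniqueness only within the ball $B_\rho$, so the unqualified uniqueness of the proposition still requires an energy argument of the type in the second half of the paper's proof; and the self-mapping property of your iteration map requires the centre of $B_\rho$ to be a sufficiently good approximate solution for small $T$, which is essentially the paper's construction of $w_*$ and $u_*$ resurfacing inside your framework.
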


For $l\in\N$ and $T>0$, let us introduce the following Hilbert space
\[
P^l(T):=\Big\{ u:M\times[0,T]\to\R:\, \sum_{2j+k\leq 2l}\int_0^T\|\pa_t^ju\|_{H^k(M)}^2dt<+\infty\Big\}
\]
and equip it with the norm
\[
\|u\|_{P^l(T)}^2=\sum_{2j+k\leq 2l}\int_0^T\|\pa_t^ju\|_{H^k(M)}^2dt.
\]
Clearly, for every $T>0$, there is a natural continuous embedding $P_a^l\hookrightarrow P^l(T)$ by restricting all functions on the time interval $[0,T]$. And, in the study of short time existence result, it will be easier to use the space $P^l(T)$ instead of the weighted spaces $P_a^l$, $a>0$.

Before we consider our flow problem \eqref{the-parabolic-evo}, we revisit the equation \eqref{a linear problem} since it will serve as a linearization of \eqref{the-parabolic-evo}. For later use, for a function $w\in P_a^l$, the current state of $w$ at time $t\in[0,+\infty)$ will be referred to $(t, w(\cdot,t))$ with $w(\cdot,t)\in P_{a,t}^l$, where $P_{a,t}^l$ stands for the function space at time $t$ (and we will simply call it $t$-fiber of $P_a^l$). Then, a crucial requirement for the operator $\cl$ to ensure the short-term existence result is that the current state of $\cl[u]$ should never be affected by the future state of $u$. 
In view of this, let us present the following lemma in the setting of $P^l(T)$ spaces.

\begin{Lem}\label{lem:cl-PT}
Under the assumptions {\rm (A2)} and {\rm (A3)},
for every $l\in\N$ and $T>0$, $\cl$ is a continuous linear operator on $P^l(T)$, i.e., $\cl\in\msl(P^l(T),P^l(T))$.
\end{Lem}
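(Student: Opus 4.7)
The plan is to reduce the claim to the already-established bound $\cl \in \msl(P_a^l, P_a^l)$ from assumption (A3), via a standard extension-restriction procedure in the time variable, with the temporal locality of $\cl$ supplied by (A2). The whole argument hinges on first showing that $\cl$ respects the restriction $u \mapsto u|_{[0,T]}$, and then invoking a bounded extension operator $E : P^l(T) \to P_a^l$ so as to transfer the $P_a^l$-boundedness of $\cl$ to a $P^l(T)$-boundedness.

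The first preparatory step, extracted from (A2), is the \emph{temporal locality} of $\cl$: if $w \in P_a^l$ vanishes on $[0, T]$, then so does $\cl[w]$. By a standard density argument, it suffices to treat $w \in P_a^l$ that vanishes on a slightly larger interval $[0, T + \delta]$. For any $t_0 \in [0, T]$, pick a smooth cutoff $\alpha : [0, +\infty) \to \R$ with $\alpha(t_0) = 0$ and $\alpha \equiv 1$ on $[T + \delta, +\infty)$. Then $\alpha w \equiv w$ on $[0,+\infty)$, so (A2) yields
\[
\cl[w](\cdot, t_0) \;=\; \cl[\alpha w](\cdot, t_0) \;=\; \alpha(t_0)\, \cl[w](\cdot, t_0) \;=\; 0.
\]
Since $t_0 \in [0, T]$ was arbitrary, $\cl[w]$ vanishes on $[0, T]$, and the density argument transfers this to all $w \in P_a^l$ vanishing on $[0, T]$.

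Second, I would fix a bounded linear extension operator $E : P^l(T) \to P_a^l$ with $(Eu)|_{[0, T]} = u$. A Seeley-type reflection of order $2l$ in the time variable across $t = T$, followed by multiplication with a smooth time cutoff $\zeta(t)$ supported in $[0, 2T]$ and equal to $1$ on $[0, T]$, furnishes such an operator; classical Sobolev extension theory provides the estimate $\|Eu\|_{P_a^l} \leq C_{T, a, l}\, \|u\|_{P^l(T)}$ controlling all mixed derivatives $\pa_t^j \nabla_\ig^k$ with $2j + k \leq 2l$. I then \emph{define} $\cl[u] := \cl[E u]|_{[0, T]}$ for $u \in P^l(T)$; independence of the particular choice of extension is exactly the locality content of the first step, applied to the $P_a^l$-difference of two admissible extensions.

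The continuity bound is then immediate: using $e^{-2at} \geq e^{-2aT}$ on $[0, T]$ together with (A3) and the extension estimate,
\[
\|\cl[u]\|_{P^l(T)} \;\leq\; e^{aT}\,\|\cl[Eu]\|_{P_a^l} \;\leq\; C_{(A3)}\, e^{aT}\,\|Eu\|_{P_a^l} \;\leq\; C_{(A3)}\, C_{T, a, l}\, e^{aT}\,\|u\|_{P^l(T)},
\]
which establishes $\cl \in \msl(P^l(T), P^l(T))$. The only ingredient beyond (A2)-(A3) is the construction of $E$ respecting the graded regularity $2j + k \leq 2l$ built into the norms of $P^l(T)$ and $P_a^l$; I expect this to be the only genuinely technical point, but it is routine via Seeley's formula and so should pose no real obstacle.
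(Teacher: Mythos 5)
Your proposal is correct and follows essentially the same route as the paper: temporal locality of $\cl$ extracted from (A2), a bounded extension $P^l(T)\to P_a^l$, the comparison $e^{-2at}\geq e^{-2aT}$ on $[0,T]$, and (A3) to close the estimate. The only cosmetic difference is that you give the boundedness as a direct chain of inequalities, whereas the paper phrases the same estimate as a contradiction argument with a sequence.
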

\begin{proof}
We first claim that
\begin{claim*}
	$\cl$ is $t$-fiber preserving on the $P_a^l$ spaces in the sense that $\cl[u](\cdot,t)\in P_{a,t}^l$ depends only on the current state of $u$ at time $t$ for all $u\in P_a^l$.
\end{claim*}
\noindent
And then, we only need to check the boundedness of $\cl$ on $P^l(T)$. Let us assume to the contrary that there exists a $P^l(T)$-bounded sequence of functions $\{u_j\}_{j=1}^\infty$ such that $\|\cl[u_j]\|_{P^l(T)}\to\infty$ as $j\to\infty$.

For each $j\geq1$, let us consider an bounded extension $\tilde u_j\in P_a^l$ of the function $u_j$ in the sense that
\[
\tilde u_j|_{M\times[0,T]}=u_j \quad \text{and} \quad \|\tilde u_j\|_{P_a^l}\leq C\|u_j\|_{P^l(T)}
\]
where $C>0$ is independent of $j$. Then, by the assumption (A3) and the above claim, we have that $\{\cl[\tilde u_j]\}$ is a $P_a^l$-bounded sequence satisfying $\cl[\tilde u_j]|_{M\times[0,T]}=\cl[u_j]$. On the other hand, it follows easily that 
\begin{\equ}\label{extension-bound}
\aligned
\|\cl[\tilde u_j]\|_{P_a^l}^2&=\sum_{2j+k\leq 2l}\int_0^\infty e^{-2at}\|\pa_t^j \cl[\tilde u_j]\|_{H^k(M)}^2dt \\
&\geq C_{a,T}\sum_{2j+k\leq 2l}\int_0^T\|\pa_t^j\cl[\tilde u_j]\|_{H^k(M)}^2dt
=C_{a,T}\|\cl[u_j]\|_{P^l(T)}^2
\endaligned
\end{\equ}
where $C_{a,T}>0$ depends only on $a$ and $T$. And since we have assumed that $\|\cl[u_j]\|_{P^l(T)}\to\infty$ as $j\to\infty$, we can conclude from \eqref{extension-bound} that $\|\cl[\tilde u_j]\|_{P_a^l}\to\infty$, which is absurd. And therefore, $\cl$ must be a bounded linear operator on $P^l(T)$.

\medskip

It remains to check the above claim. And let us point out here that the claim is actually equivalent to the assumption (A2) imposed before. In fact, due to the linearity of $\cl$, it is evident that the $t$-fiber preserving property of $\cl$  implies (A2). And to see the reverse, let us take an arbitrary function $\al\in C^\infty(\R)$ so that $\al(0)=1$ and $\al\equiv0$ on $\R\setminus(-1,1)$, and define $\al_{\tau,\vr}(t)=\al((t-\tau)/\vr)$ for $t\geq0$ with $\tau\in[0,+\infty)$ and $\vr>0$. Then we can infer from (A2) that
\[
\cl[u](\cdot,\tau)=\lim_{\vr\to0}\al_{\tau,\vr}(\tau)\cl[u](\cdot,\tau)=\lim_{\vr\to0}\cl[\al_{\tau,\vr}u](\cdot,\tau).
\]
This, together with the arbitrariness of $\al$, suggests that $\cl[u](\cdot,\tau)$ depends only on the current state $(\tau,u(\cdot,\tau))$. 
\end{proof}


Analogous to Theorem \ref{thm-mff-isomorphism}, we have the following time-localized result in the setting of $P^l(T)$ spaces.

\begin{Lem}\label{lem:mff-isomorphism-PlT}
For every $T>0$ and $l\in\N\cup\{0\}$, the map $\mff$ given by formula \eqref{the map mff} is an isomorphism of $P^{l+1}(T)$ onto $H^{2l+1}(M)\times P^l(T)$.
\end{Lem}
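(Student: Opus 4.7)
The strategy is to reduce everything to Theorem \ref{thm-mff-isomorphism} by an extension/restriction argument between $P^l(T)$ and the weighted global spaces $P_a^l$, using that the natural restriction $P_a^l \to P^l(T)$ is bounded.

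First, I would verify continuity of $\mff:P^{l+1}(T)\to H^{2l+1}(M)\times P^l(T)$. The bound on the second component is immediate from the uniform bounds on $\ca$ and its derivatives, together with Lemma \ref{lem:cl-PT}, which gives $\|\cl[u]\|_{P^l(T)}\leq C\|u\|_{P^{l+1}(T)}$. The first component is bounded by the parabolic trace estimate already invoked in the proof of Theorem \ref{thm-mff-isomorphism} (the argument there is purely local in time and transfers verbatim to the interval $[0,T]$).

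For surjectivity, given $(u_0,f)\in H^{2l+1}(M)\times P^l(T)$, the plan is to construct a bounded extension operator $E: P^l(T)\to P_a^l$ (with $a$ large enough in the sense of Proposition \ref{prop:existence in HH-a} and Lemma \ref{lem:regularity3}) satisfying $(Ef)|_{M\times[0,T]}=f$. A Seeley-type reflection in the $t$-variable, combined with multiplication by a smooth temporal cutoff supported on a slightly larger interval, produces such an extension and preserves the anisotropic norm $\sum_{2j+k\le 2l}\|\pa_t^j\cdot\|_{H^k(M)}^2$ (reflection is applied fiberwise in $t$, so the spatial regularity is automatic, and the mixed derivatives transform by the chain rule). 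Theorem \ref{thm-mff-isomorphism} then provides a unique $\tilde u\in P_a^{l+1}$ with $\mff[\tilde u]=(u_0,Ef)$, and its restriction $u=\tilde u|_{M\times[0,T]}$ lies in $P^{l+1}(T)$ and satisfies $\mff[u]=(u_0,f)$.

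For injectivity, suppose $u\in P^{l+1}(T)$ satisfies $\mff[u]=(0,0)$. Two routes are available: either (i) extend $u$ past $T$ by solving the homogeneous equation with initial value $u(\cdot,T)\in H^{2l+1}(M)$ via Theorem \ref{thm-mff-isomorphism}, and then glue to obtain $\tilde u\in P_a^{l+1}$ with $\mff[\tilde u]=(0,0)$ globally, forcing $\tilde u\equiv 0$; or, more directly, (ii) test the equation against $u$, apply the G\r{a}rding inequality \eqref{Garding-inequ} together with \eqref{identity1} adapted to the interval $[0,t]$, and close the estimate by Gr\"onwall. Continuity of the inverse then follows either from the open mapping theorem once the algebraic isomorphism is established, or quantitatively from Lemma \ref{lem:regularity3} applied to the extension $\tilde u$ together with the boundedness of $E$.

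The principal technical obstacle is the construction of the extension operator $E$ with the anisotropic parabolic scaling $2j+k\le 2l$. The mixed-derivative structure of $P^l(T)$ is more delicate than a single-index Sobolev space, but because the scaling is preserved under $t$-reflection, the classical Seeley construction does go through; this is the only step that is not a transparent rephrasing of results already established in the paper.
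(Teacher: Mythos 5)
Your proposal follows essentially the same route as the paper: continuity via Lemma \ref{lem:cl-PT}, surjectivity by extending $f$ to a function in $P_a^l$, solving globally with Theorem \ref{thm-mff-isomorphism}, and restricting to $[0,T]$, and injectivity by your route (ii), i.e.\ testing against $w=u-v$, using the G\r{a}rding inequality \eqref{Garding-inequ} and Gr\"onwall, with the open mapping theorem closing the argument. The only difference is that you spell out the extension operator (Seeley reflection plus temporal cutoff), which the paper leaves implicit; this is a correct and harmless elaboration.
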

\begin{proof}
With Lemma \ref{lem:cl-PT} readily available, we can establish the continuity of $\mff$ by employing the same reasoning as in Theorem \ref{thm-mff-isomorphism}. Hence, our sole objective is to prove that $\mff$ is an isomorphism.

Given any $u_0\in H^{2l+1}(M)$ and $f\in P^l(T)$, let us consider an extension $\tilde f\in P_a^l$ of the function $f$ and we set $\tilde u\in P_a^{l+1}$ be the solution of the linear problem \eqref{equ-initial-general} in $M\times[0,+\infty)$. Clearly, $u:=\tilde u|_{M\times[0,T]}$ belongs to $P^{l+1}(T)$ and satisfies $\mff[u]=(u_0,f)$ in $M\times[0,T]$. Suppose that $v\in P^{l+1}(T)$ is another function such that $\mff[v]=(u_0,f)$ in $M\times[0,T]$, then we set $w=u-v$. Now, it is easy to see that $w$ satisfies the equation
\[
\pa_t w-\ca\De_\ig w+\cl[w]=0, \quad w(\cdot,0)=0.
\]
Notice that, by \eqref{Garding-inequ}, we can get
\[
\aligned
\int_M w^2(x,t)d\vol_\ig&=2\int_0^t\int_Mw(x,s)\pa_t w(x,s) d\vol_\ig ds \\
&=2\int_0^t\int_M w(x,s)\big[\ca\De_\ig w-\cl[w]\big](x,s)d\vol_\ig ds\\
&\leq -\de \int_0^t\|w(\cdot,s)\|_{H^1(M)}^2 ds + 2\ka\int_0^t |w(\cdot,s)|_2^2 ds \\
&\leq 2\ka\int_0^t \int_M w^2(x,s)d\vol_\ig ds
\endaligned
\]
as $w(\cdot,t)\in W^{2,2}(M)$ for almost every $t\in[0,T]$. Then, by Gronwall's inequality, we have that $\int_M w^2(x,t)d\vol_\ig$ is zero for every $t\in[0,T]$. It follows that $w$ is identically zero, and hence $u$ and $v$ must coincide.

Since $\mff:P^{l+1}(T)\to H^{2l+1}(M)\times P^l(T)$ is a continuous bijection, we can conclude from the open mapping theorem that $\mff$ is an isomorphism.
\end{proof}

\begin{Rem}\label{rem:solu smooth}
If both $u_0$ and $f$ are smooth functions, then the unique solution $u$ of Eq. \eqref{equ-initial-general} is guaranteed to be in the space $\cap_{l\geq1}P^l(T)$. By the Parabolic Sobolev embeddings (see for instance Proposition \ref{Parabolic-Sobolev-Embeddings} in the Appendix), for any $k\in\N$, we can always find a large $l$ so that $P^l(T)$ continuously embeds into $C^k(M\times[0,T])$. And hence we can conclude that the solution $u$ actually belongs to $C^\infty(M\times[0,T])$ provided that $u_0$ and $f$ are smooth.
\end{Rem}

Now we are nearing the point where we can provide the short time existence result for the Einstein-Dirac flow \eqref{the-parabolic-evo}. Before we start the proof, let us denote $P^l_+(T)$ as the positive cone in $P^l(T)$, consisting of all $u\in P^l(T)$ such that $u>0$. We also introduce a subset of $P_+^l(T)$ as
\[
P_{+,1}^l(T):=\big\{ u\in P_+^l(T):\,u(\cdot,0) \in \cu_{\ig,n_0}^1  \big\}
\]
where $\cu_{\ig,n_0}^1$ is given by \eqref{cu-g-n-k} with $n_0>1$ being initially fixed. We will now consider the a quasilinear elliptic operator  given by
\[
\cq[u]=c_m u^{-\frac4{m-2}}\De_{\ig} u - \scal_{\ig} u^{\frac{m-6}{m-2}}+\Big(\int_M u L_{\ig}u \,d\vol_{\ig}\Big)|\psi_u|_{\ig}^2 u ^{-\frac m{m-2}},
\]
where $c_m=\frac{4(m-1)}{m-2}$ and $\psi_u(t)\in L^2(M,\mbs(M))$ is the normalized eigenspinor associated to the simple Dirac eigenvalue $\lm_u(t)\neq0$ with $\lm_u(0)\in[-n_0,0)\cup(0,n_0]$, which exists at least for a small time interval $[0,T_u]$. Let us mention that, due to the parabolic Sobolev embeddings (see for instance \cite[Proposition 4.1]{MM}), we have $P^l(T)\hookrightarrow C^1(M\times[0,T])$ for $l>\frac{m+6}4$. Then, under the hypothesis of Lemma \ref{lem:open-dense}, we can see that $P_{+,1}^l(T)$ is an open dense subset of $P_+^l(T)$.  We also emphasize that the set $P_{+,1}^l(T)$ can be regarded as a collection of curves of functions with their starting point in $\cu_{\ig,n_0}^1$. It is possible that for certain curves in this collection, the Dirac operator associated with the conformal metric $\ig^{u(\cdot,t_*)}$ at a particular time $t_*>0$ may not possess a simple eigenvalue. However, thanks to Lemma \ref{lem:Ck-perturbed-eigenvalue}, we can always find a $P^l(T)$-neighborhood $U$ of a given function $u\in P_{+,1}^l(T)$ (by setting $T$ smaller if necessary) so that $\cq$ is well-defined in the sense that $t\mapsto(\lm_v(t),\psi_v(t))$ exists (at least continuously) for all $v\in U$ on $[0,T]$. And we shall call $U$ the {\it $\cq$-admissible neighborhood} of $u$. In order to study the short time existence result of Eq. \eqref{the-parabolic-evo}, for a given $u\in P_{+,1}^l(T)$ and its $\cq$-admissible neighborhood $U\subset P_{+,1}^l(T)$, let us consider 
\begin{\equ}\label{the-map-msf}
\msf[v]=(v_0,\pa_tv-\cq_*[v]), \quad \forall v\in U
\end{\equ}
where $v_0:=v(\cdot,0)$ and 
\[
\cq_*[v]:=c_m v^{-\frac4{m-2}}\De_{\ig} v - \scal_{\ig} v^{\frac{m-6}{m-2}}+\vartheta(t)\Big(\int_M v L_{\ig}v \,d\vol_{\ig}\Big)|\psi_v|_{\ig}^2 v ^{-\frac m{m-2}},
\]
with $\vartheta\in C^\infty([0,+\infty))$ satisfying
\[
\vartheta(t)=\begin{cases}
	1, & t\leq T/2 \\
	0, & t\geq 3T/4
\end{cases}.
\]

Once a $\cq$-admissible neighborhood $U$ is fixed, one of our key ingredient here is the observation that $\msf: U\to H^{2l-1}(M)\times P^{l-1}(T)$ is of class $C^1$ provided that $l$ is suitably large (say $l>(m+8)/2$). In particular, the Fr\'echet derivative of $\msf$ at $v\in U$ can be written in the form
\[
D\msf[v](w)=\frac{d}{d\tau}\msf[v+\tau w]\Big|_{\tau=0}=\big(w_0,  \pa_tw - c_mv^{-\frac4{m-2}}\De_\ig w + \cl_v[w] \big)
\]
for $w\in P^l(T)$, where $\cl_v \in\msl(P^l(T),P^l(T))$ depends continuously on $v$ (the explicit formulation of $\cl_v$ will be given later). Notice that we have inserted a cut-off function $\vartheta$ in the formulation of $\cq_*$,  we can actually extend  $\cq_*$ to act on any positive function $\tilde v\in P_a^l$, $a>0$, such that $\tilde v|_{M\times[0,T]}=v\in U$. And hence the induced operator $\cl_{\tilde v}$ exists as well for such $\tilde v$. By taking a cut-off function $\rho\in C^\infty([0,+\infty))$ so that $\rho(t)=1$ for $t\leq T$ and $\rho(t)=0$ for $t\geq 2T$, we can introduce an extension of $D\msf[v]$ as
\begin{\equ}\label{the extended map}
\mff_{\tilde v}[w]=\big( w_0, \pa_t w- (c_m\rho \tilde v^{-\frac4{m-2}} +1- \rho) \De_\ig w+\rho  \cl_{\tilde v}[w]\big) \quad \text{for } w\in P_a^l.
\end{\equ}
And then we have $\mff_{\tilde v}|_{P^l(T)}=D\msf[v]$. In case $\tilde v$ is a smooth function, the above expression will make it legitimate to apply Theorem \ref{thm-mff-isomorphism} and Lemma \ref{lem:mff-isomorphism-PlT}  to $\mff_{\tilde v}$ and $D\msf[v]$ respectively. All these will be verified in Appendix \ref{A. Technical results}.

The proof of Proposition \ref{prop:local-well-posedness} will be divided into two classical steps: the local existence and the energy inequality. 

\begin{proof}[Proof of Proposition \ref{prop:local-well-posedness}: local existence]
Given $u_0\in \cap_{k\in\N}\, \cu_{\ig,n_0}^k$, let us consider an  auxiliary function
\begin{\equ}\label{auxiliary function w}
w_*(x,t)=\sum_{i=0}^{l-1}\frac{a_i(x)t^i}{i!}
\end{\equ}
with $l\in\N$ being necessarily large, $a_0=u_0$ and $a_1,\dots,a_{l-1}\in C^\infty(M)$ to be determined later. Notice that $a_0>0$, by virtue of Lemma \ref{lem:Ck-perturbed-eigenvalue}, we can always find $T>0$ small such that $w_*\in P_{+,1}^l(T)$ and the eigenpair $(\lm_{w_*}(t),\psi_{w_*}(t))$ exists on $[0,T]$. Furthermore, we can take $u_*\in P^l(T)$ to be the unique solution of the linear problem
\[
\left\{
\aligned
&\pa_t u=c_m w_*^{-\frac4{m-2}}\De_{\ig} u-\scal_{\ig} w_*^{\frac{m-6}{m-2}}+\vartheta\cdot\Big(\int_M w_* L_{\ig}w_* \,d\vol_{\ig}\Big)|\psi_{w_*}|_{\ig}^2 w_*^{-\frac m{m-2}} \\
& u(x,0)= u_0(x).
\endaligned
\right.
\]
Such solution exists by Lemma \ref{lem:mff-isomorphism-PlT} and it is smooth by Remark \ref{rem:solu smooth}, as $u_0$ and $w_*$ are smooth (thus the eigenspinor $\psi_{w_*}$ is also smooth).

By narrowing the time interval $[0,T]$ (if necessary), we have $u_*\in P^l_{+,1}(T)$ and
\[
\msf[u_*]=(u_0, \pa_t u_*-\cq_*[u_*])=:(u_0,f_*)
\]
where $f_*$ has the explicit expression
\[
\aligned
f_*&=c_m\big( w_*^{-\frac4{m-2}} - u_*^{-\frac4{m-2}} \big)\De_{\ig} u_*-\scal_{\ig}\Big( w_*^{\frac{m-6}{m-2}}-u_*^{\frac{m-6}{m-2}} \Big) \\
&\qquad +\vartheta\cdot\Big(\int_M w_* L_{\ig}w_* \,d\vol_{\ig}\Big)|\psi_{w_*}|_{\ig}^2 w_*^{-\frac m{m-2}} \\
&\qquad -\vartheta\cdot\Big(\int_M u_* L_{\ig}u_* \,d\vol_{\ig}\Big)|\psi_{u_*}|_{\ig}^2 u_*^{-\frac m{m-2}}.
\endaligned
\]

If we compute the Fr\'echet derivative of the map $\msf$ at $u_*\in C^\infty(M\times[0,T])$, in the ``direction" $w\in P^l(T)$, we obtain
\[
D\msf[u_*](w)=\big(w_0, \pa_t w-c_m u_*^{-\frac4{m-2}}\De_{\ig} w +\cl_{u_*}[w]  \big)
\]
where $w_0:=w(\cdot,0)$ and
\[
\aligned
\cl_{u_*}[w] &= \frac{4c_m}{m-2}u_*^{-\frac{m+2}{m-2}}w\De_{\ig} u_*-\frac{m-6}{m-2}\scal_{\ig}u_*^{-\frac{4}{m-2}}w \\
&\qquad +2\vartheta\cdot\Big(\int_M w L_{\ig}u_* \,d\vol_{\ig}\Big)|\psi_{u_*}|_{\ig}^2 u_*^{-\frac m{m-2}} \\
&\qquad +2\vartheta\cdot\Big(\int_M u_* L_{\ig}u_* \,d\vol_{\ig}\Big)\real(\psi_{u_*},\phi_{u_*,w})_{\ig} u_*^{-\frac m{m-2}} \\
&\qquad -\frac{m}{m-2}\vartheta\cdot\Big(\int_M u_*L_{\ig}u_* \,d\vol_{\ig}\Big)|\psi_{u_*}|_{\ig}^2 u_*^{-\frac {2m-2}{m-2}} w
\endaligned
\]
with
\[
\aligned
\phi_{u_*,w}(t)&=\frac{\ka_{u_*,w}(t)}2\psi_{u_*}(t) \\
&\qquad -\frac{2\lm_{u_*}(t)}{m-2}\big( u_*(\cdot,t)^{-\frac2{m-2}}D_\ig-\lm_{u_*}(t) \big)^{-1}\circ(I-P_{\lm_{u_*}(t)})\big( u_*(\cdot,t)^{-1} w(\cdot,t)\psi_{u_*}(t) \big)
\endaligned
\]
and
\[
\ka_{u_*,w}(t)=-\frac2{m-2}\int_M u_*(\cdot,t)^{\frac{4-m}{m-2}}w(\cdot,t)|\psi_{u_*}(t)|_\ig^2d\vol_\ig.
\]
And we can find that $D\msf[u_*](w)=(z,f)\in H^{2l-1}(M)\times P^{l-1}(T)$ is equivalent to the linear problem
\begin{\equ}\label{DFu}
\left\{
\aligned
&\pa_t w-c_m u_*^{-\frac4{m-2}}\De_\ig w + \cl_{u_*}[w]= f \\
&w(x,0)=z(x)
\endaligned 
\right.
\end{\equ}
which coincides with our model problem \eqref{a linear problem} on $M\times[0,T]$. 

By applying Lemma \ref{lem:mff-isomorphism-PlT}, we can see that $D\msf[u_*]: P^l(T)\to H^{2l-1}(M)\times P^{l-1}(T)$ is an isomorphism. And, in other words, there exists a unique solution $w_{z,f}$ of Eq. \eqref{DFu} for every pair $(z,f)\in H^{2l-1}(M)\times P^{l-1}(T)$. Now, by virtue of the inverse function theorem, we find the map $\msf$ is a diffeomorphism from a neighborhood $U\subset P_{+,1}^l(T)$ of $u_*$ onto a neighborhood $V\subset H^{2l-1}(M)\times P^{l-1}(T)$ of $(u_0,f_*)$. Particularly, $U$ can be chosen as a $\cq$-admissible neighborhood of $u_*$.

Next, we can consider a sequence of time-shifted functions $f_{*,k}:M\times[0,T]\to\R$ given by
\[
f_{*,k}(x,t)=\begin{cases}
	0 & \text{if } 0\leq t<1/k ,\\
	f_*(x,t-1/k) & \text{if } 1/k\leq t\leq T.
\end{cases}
\]
Assuming for the moment that 
\begin{\equ}\label{f--0}
	\pa_t^j f_*|_{t=0}=0 \text{ and }  \nabla_\ig^p\pa_t^j f_*|_{t=0}=0 \text{ for any } j=0,1,\dots,l-1 \text{ and } p\in\N.
\end{\equ}
We can then directly deduce $\nabla_\ig^p\pa_t^j f_{*,k}\in C(M\times[0,T])$ for every $j=0,1,\dots,l-1$ and $p\in\N$. Moreover, it follows easily that
\[
\nabla_\ig^p\pa_t^j f_{*,k}\to \nabla_\ig^p\pa_t^j f_{*} \text{ in } L^2(M\times[0,T]) \text{ for } j=0,1,\dots,l-1 \text{ and } 2j+p\leq 2(l-1),
\]
and hence $f_{*,k}\to f_*$ in $P^{l-1}(T)$. From this point of view, there is $k_0\in\N$ such that $(u_0,f_{*,k_0})$ lies in the neighborhood $V$ of $\msf[u_*]=(u_0,f_*)$ and $f_{*,k_0}=0$ on $M\times[0,1/k_0]$. Since $\msf$ is a diffeomorphism between $U$ and $V$, we can find a function $u_{*,k_0}\in U$ such that $\msf[u_{*,k_0}]=(u_0,f_{*,k_0})$. Clearly, such $u_{*,k_0}\in P_{+,1}^l(1/k_0)$ will be a solution of the Einstein-Dirac flow \eqref{the-parabolic-evo} on $M\times[0,1/k_0]$. By taking $T_0<1/k_0$, the parabolic regularity implies that $u_{*,k_0}\in C^\infty(M\times[0,T_0])$.

It remains to verify \eqref{f--0}, which is strongly depending on the choice of those functions $a_1,\dots,a_{l-1}$ in \eqref{auxiliary function w}. Notice that we have set $a_0=u_0\in\cap_{k\in\N}\,\cu_{\ig,n_0}^k$, let us defined $a_i$ via the recurrence formula 
\[
a_i=\pa_t^{i-1}\Big[ c_m w_*^{-\frac4{m-2}}\De_{\ig} u_*-\scal_{\ig} w_*^{\frac{m-6}{m-2}}+\vartheta\cdot\Big(\int_M w_* L_{\ig}w_* \,d\vol_{\ig}\Big)|\psi_{w_*}|_{\ig}^2 w_*^{-\frac m{m-2}} \Big]\Big|_{t=0}
\] 
for $i\geq1$. Observe that the right-hand side above contains time-derivatives at $t=0$ of $w_*$, $\nabla_\ig w_*$, $L_{\ig}w_*$ and $\De_{\ig} u_*$ only up to order $i-1$, hence it depends only on the functions $a_0,\dots, a_{i-1}$. In this way, we get the precise details of the functions $a_1,\dots, a_{l-1}\in C^\infty(M)$. Following on this clue, we point out that $a_i=\pa_t^iw_*|_{t=0}=\pa_t^i u_*|_{t=0}$ for $i=0,1,\dots, l-1$. And hence $f_*\in C^\infty(M\times[0,T])$ satisfies \eqref{f--0} as was required.
\end{proof}

Having disposed of the local existence result, we will now turn to consider the uniqueness of a solution and the continuous dependence on its initial data. 
Actually, we shall mention here that the proof above gives us more. In fact, once the uniqueness is settled, the solution of the Einstein-Dirac flow \eqref{the-parabolic-evo} with initial data $(u_0,\lm_0,\psi_0)\in \cap_{k\in\N}\,\cu_{\ig,n_0}^k\times(-n_0,n_0)\times \ker\big(D_\ig-\lm_0u_0^{2/(m-2)}\big)$ can be characterized by $u=\msf^{-1}(u_0,0)$ restricted on $M\times[0,T_0]$. Assuming $u_{j,0}\to u_0$ in $C^\infty(M)$ as $j\to\infty$, we also have $u_{j,0}\to u_0$ in $H^{2l-1}(M)$ for all $l\in\N$, and hence for $j$ large enough we have $u_j=\msf^{-1}(u_{j,0},0)\in P_{+,1}^l(T_0)$ is the unique solution with initial data $(u_{j,0},\lm_{j,0},\psi_{j,0})$. By recalling the continuity of perturbed Dirac eigenvalue problem mentioned in Section \ref{sec:eigenvalues}, the eigenpair $(\lm_{j,0},\psi_{j,0})$ can be chosen so that $\lm_{j,0}\to\lm_0$ and $\psi_{j,0}\to\psi_0$. Therefore, we have $u_j\to u$ in $P^l(T_0)$-topology, which is nothing but the continuous dependence on the initial data. In what follows, let us verify the uniqueness result of the Einstein-Dirac flow.

\begin{proof}[Proof of Proposition \ref{prop:local-well-posedness}: energy inequality]
Suppose that we have two $P_{+,1}^l(T_0)$-solutions $u$ and $v$ of \eqref{the-parabolic-evo} on $M\times[0,T_0]$ with $u(\cdot,0)=v(\cdot,0)=u_0$, $\lm_u(0)=\lm_v(0)$ and $\psi_u(0)=\psi_v(0)$, where $l$ is suitably large. By setting $w=u-v$, we find that
\begin{eqnarray*}
\frac{d}{dt}\int_M|\nabla_\ig w|^2d\vol_\ig&=&2\int_M\nabla_\ig w\cdot\nabla_\ig\pa_tw\,d\vol_\ig\\
&=&2\int_M\nabla_\ig w\cdot\nabla_\ig\big( \cq[u]-\cq[v] \big)d\vol_\ig \\
&=&-2\int_M\De_\ig w\cdot\big( \cq[u]-\cq[v] \big)d\vol_\ig \\
&=&-2c_m\int_M u^{-\frac4{m-2}}|\De_\ig w|^2  d\vol_\ig \\
&&\qquad - 2c_m\int_M\De_\ig w\cdot\big( \big( u^{-\frac4{m-2}}-v^{-\frac4{m-2}} \big)\De_\ig v \big)d\vol_\ig \\
&&\qquad +2\int_M\De_\ig w\cdot\scal_{\ig}\cdot\big(  u^{\frac{m-6}{m-2}}- v^{\frac{m-6}{m-2}} \big) d\vol_\ig\\
&&\qquad -2\int_M \De_\ig w\cdot \big( \cs[u]-\cs[v] \big)d\vol_\ig,
\end{eqnarray*}
where
\[
\cs[z]=\Big( \int_M zL_\ig z\,d\vol_\ig \Big)|\psi_z|_{\ig}^2 z^{-\frac{m}{m-2}}
\]
for $z=u$ or $v$.

Here we need to be careful because the well-definedness of $\cs[u]$ and $\cs[v]$ do not imply the existence of $\cs[su+(1-s)v]$ for all $s\in[0,1]$ on $M\times[0,T_0]$. However, since we have assumed $u(\cdot,0)=v(\cdot,0)=u_0$, it is possible to find $\tau\in(0,T_0]$ and a $\cq$-admissible neighborhood $U\subset P_{+,1}^l(\tau)$ of $u|_{M\times[0,\tau]}$ such that $(su+(1-s)v)|_{M\times[0,\tau]}\in U$ for all $s\in[0,1]$. Indeed, we can first find a $C^1$-neighborhood $\co$ of $u_0$ so that $\co\subset\cu_{\ig,n_0}^1$ and $u(\cdot,t), v(\cdot,t)\in\co$ for all $t\in[0,\tau]$. Then, by Lemma \ref{lem:Ck-perturbed-eigenvalue} and the parabolic Sobolev embedding Proposition \ref{Parabolic-Sobolev-Embeddings}, we have $\cs$ can be defined on any $t$-parameterized trajectory of functions in $\co$. Notice that we have $(su+(1-s)v)(\cdot,t)\in \co$ for all $t\in[0,\tau]$ and $s\in[0,1]$, we can thus ensure $(su+(1-s)v)|_{M\times[0,\tau]}$ lies in a $\cq$-admissible neighborhood of $u|_{M\times[0,\tau]}$. And therefore, we can consider the map  $s\mapsto \cs[su+(1-s)v]$ on $M\times[0,\tau]$ for all $s\in[0,1]$.

Let's go back to the previous computation. By setting $l$ large enough (so that Lemma \ref{lem:msf-C1} applies), the positiveness and boundedness of $u$ and $v$ imply that 
\[
\int_M u^{-\frac4{m-2}}|\De_\ig w|^2  d\vol_\ig\geq \de\int_M |\De_\ig w|^2 d\vol_\ig,
\]
\[
\aligned
\int_M\De_\ig w\cdot\big( \big( u^{-\frac4{m-2}}-v^{-\frac4{m-2}} \big)\De_\ig v \big)d\vol_\ig &\leq C\int_M |\De_\ig w| |u-v| d\vol_\ig \\
&=C\int_M |\De_\ig w| |w| d\vol_\ig \\
&\leq \vr\int_M|\De_\ig w|^2d\vol_\ig + C_\vr \int_M|w|^2d\vol_\ig,
\endaligned
\]
\[
\int_M\De_\ig w\cdot\scal_{\ig}\cdot\big(  u^{\frac{m-6}{m-2}}- v^{\frac{m-6}{m-2}} \big) d\vol_\ig\leq \vr\int_M|\De_\ig w|^2d\vol_\ig + C_\vr \int_M|w|^2d\vol_\ig,
\]
and
\[
\aligned
\int_M \De_\ig w\cdot \big( \cs[u]-\cs[v] \big)d\vol_\ig&\leq \vr\int_M|\De_\ig w|^2d\vol_\ig + C_\vr \int_M \big| \cs[u]-\cs[v] \big|^2d\vol_\ig\\
&\leq \vr\int_M|\De_\ig w|^2d\vol_\ig + C_\vr \int_M \big| D\cs[v+\theta w](w) \big|^2d\vol_\ig\\
&\leq \vr\int_M|\De_\ig w|^2d\vol_\ig + C_\vr \int_M \big( |\nabla_\ig w|^2+ |w|^2 \big) d\vol_\ig,
\endaligned
\]
where in the last estimate we have designated $D\cs$ as the Fr\'echet derivative of $\cs:P_{+,1}^l(\tau)\supset U\to P^l(\tau)$ (which exists due to Lemma \ref{lem:msf-C1} and satisfies assumption (A1) as is indicated by Corollary \ref{lem:A1-A3}). By fixing $\vr$ small, we get
\begin{\equ}\label{dnablaw}
\frac{d}{dt}\int_M|\nabla_\ig w|^2d\vol_\ig\leq -\frac\de2 \int_M |\De_\ig w|^2 d\vol_\ig+ C \int_M \big( |\nabla_\ig w|^2+ |w|^2 \big) d\vol_\ig.
\end{\equ}
Next, using similar idea as above, we can compute 
\begin{eqnarray*}
\frac{d}{dt}\int_M|w|^2d\vol_\ig&=&2\int_M w\cdot\big( \cq[u]-\cq[v] \big)d\vol_\ig \\
&=&2c_m\int_M u^{-\frac4{m-2}}w\De_\ig w \,d\vol_\ig \\
&&\quad +2c_m\int_M w\cdot\big( \big( u^{-\frac4{m-2}}-v^{-\frac4{m-2}} \big)\De_\ig v \big)d\vol_\ig \\
&&\quad -2\int_M w\cdot\scal_{\ig}\cdot\big(  u^{\frac{m-6}{m-2}}- v^{\frac{m-6}{m-2}} \big) d\vol_\ig\\
&&\quad +2\int_M w\cdot\big( \cs[u]-\cs[v] \big)d\vol_\ig \\
&&\leq  \vr \int_M|\De_\ig w|^2d\vol_\ig + C_\vr\int_M\big( |\nabla_\ig w|^2+ |w|^2 \big) d\vol_\ig.
\end{eqnarray*}
By fixing $\vr$ small and putting the above estimate together with \eqref{dnablaw}, we obtain
\begin{\equ}\label{dH1norm}
\frac{d}{dt}\int_M\big( |\nabla_\ig w|^2+ |w|^2 \big) d\vol_\ig \leq C \int_M\big( |\nabla_\ig w|^2+ |w|^2 \big) d\vol_\ig
\end{\equ}
for some constant $C>0$ on the time interval $[0,\tau]$.

Finally, from \eqref{dH1norm}, we can conclude that the quantity $\int_M\big( |\nabla_\ig w|^2+ |w|^2 \big) d\vol_\ig$ is identically zero on $[0,\tau]$ since we have $w(\cdot,0)=0$. Moreover, by repeating the above argument, it follows that if  $\int_M\big( |\nabla_\ig w|^2+ |w|^2 \big) d\vol_\ig$ is zero at some time $t_0$, then it must be zero for every time $t\in[t_0,T_0]$. The proof is hereby completed.
\end{proof}

%
%
	

\appendix
\section{Appendix}

\subsection{Parabolic Sobolev embeddings}

We present here some embedding results that were used in the context. For a proof of the following proposition, we refer the readers to \cite[Proposition 4.1]{MM}.

\begin{Prop}\label{Parabolic-Sobolev-Embeddings}
Let $u\in P^{l}(T)$. Then, for $j,k\in\N$ with $2j+k\leq 2l$, we have
\[
\|\pa_t^j\nabla^ku\|_{L^q(M\times[0,T])}\leq C \|u\|_{P^l(T)} \quad \text{if } \frac1q=\frac12-\frac{2l-k-2j}{m+2}>0;
\]
\[
\|\pa_t^j\nabla^ku\|_{L^q(M\times[0,T])}\leq C \|u\|_{P^l(T)} \quad \text{if } \frac1q=\frac12-\frac{2l-k-2j}{m+2}=0 \text{ and } 1\leq q<+\infty;
\]
the function $\pa_t^j\nabla ^k u$ is continuous and 
\[
\|\pa_t^j\nabla^ku\|_{C(M\times[0,T])}\leq C\|u\|_{P^l(T)}\quad \text{if } \frac1q=\frac12-\frac{2l-k-2j}{m+2}<0,
\]
where the constant $C>0$ is independent of $u$.
\end{Prop}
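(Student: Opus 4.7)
The plan is to reduce the statement to the model anisotropic Sobolev embedding on $\mathbb{R}^m \times \mathbb{R}$ by a partition of unity argument, and then invoke the classical anisotropic Sobolev inequality with parabolic scaling, where the time variable carries weight two.

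First I would fix a finite atlas of coordinate charts on $M$ together with a subordinate partition of unity $\{\chi_i\}$, so that for any $u \in P^l(T)$ each piece $\chi_i u$ is supported in a single chart and satisfies $\|\chi_i u\|_{P^l(T)} \leq C \|u\|_{P^l(T)}$ (the constant depending only on finitely many covariant derivatives of the $\chi_i$ and the Christoffel symbols of $\ig$). It therefore suffices to establish the inequalities on $\mathbb{R}^m \times [0,T]$ for a compactly supported $u$ in $x$, with the norm
\[
\|u\|_{P^l(T)}^2 = \sum_{2j+k \leq 2l} \int_0^T \|\partial_t^j \nabla^k u(\cdot,t)\|_{L^2(\mathbb{R}^m)}^2\, dt.
\]
Next I would extend $u$ in the time variable from $[0,T]$ to all of $\mathbb{R}$ using a Seeley-type reflection, so that the extension $\tilde u$ lies in the corresponding anisotropic Bessel potential space $\mathscr{H}^{l,2l}(\mathbb{R}^{m+1})$ defined via the Fourier multiplier $(1+|\xi|^2 + |\tau|)^{l}$, with norm controlled by $\|u\|_{P^l(T)}$.

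The core of the proof is then the classical anisotropic Sobolev embedding (see Besov--Il$'$in--Nikol$'$ski\v{\i}), which states that $\mathscr{H}^{s, 2s}(\mathbb{R}^{m+1})$ embeds into $L^q(\mathbb{R}^{m+1})$ whenever
\[
\frac{1}{q} = \frac{1}{2} - \frac{s}{m+2} > 0,
\]
where the \emph{parabolic homogeneous dimension} is $m+2$ (one for the time direction, counted with weight $2$, plus $m$ for the spatial directions). Applying this embedding to the function $\partial_t^j \nabla^k \tilde u$, which sits in $\mathscr{H}^{l - j - k/2,\, 2l-2j-k}$, yields the desired $L^q$ bound with
\[
\frac{1}{q} = \frac{1}{2} - \frac{2l-k-2j}{m+2},
\]
provided this quantity is positive. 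The borderline case $1/q = 0$ follows either from interpolation between the immediately subcritical $L^{q_0}$ estimate and the higher-regularity $L^{q_1}$ estimate for a slightly larger $l$, or from the BMO-type endpoint embedding combined with John--Nirenberg. For $1/q < 0$, two further spatial derivatives (or one more time derivative) are available past the critical threshold, so that a parabolic Morrey-type embedding applies and yields H\"older (in particular continuous) regularity of $\partial_t^j \nabla^k u$ on $M \times [0,T]$, together with the sup-norm bound by $\|u\|_{P^l(T)}$.

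The main technical obstacle is the correct handling of the anisotropic scaling, which makes the straightforward citation of the isotropic Sobolev embedding invalid: one truly needs the parabolic homogeneous dimension $m+2$, not $m+1$. The cleanest route is via the Fourier-side characterization, where the relevant multiplier $(1+|\xi|^2+|\tau|)^{-s}$ is estimated by a parabolic Riesz-type kernel whose $L^{p,\infty}$ class is computed by the parabolic dilation $(x,t) \mapsto (\lambda x, \lambda^2 t)$; Young's inequality then delivers the claimed $L^q$ bound with the stated exponent. The remaining details, namely the chart reduction, the time extension, and the treatment of the critical and supercritical cases, are routine, and coincide with the argument presented in~\cite[Proposition 4.1]{MM}, to which we refer for full details.
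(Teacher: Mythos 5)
The paper gives no proof of this proposition at all: it simply cites \cite[Proposition 4.1]{MM}, which is also where your own argument defers for the details, so in that sense you and the paper take the same route. Your outline (chart localization with a partition of unity, Seeley extension in time, the anisotropic Sobolev embedding governed by the parabolic homogeneous dimension $m+2$, and a parabolic Morrey embedding for the supercritical/continuity case) is the standard proof and is essentially correct. One small internal inconsistency worth fixing: with your convention that $\mathscr{H}^{s,2s}$ carries $2s$ spatial derivatives and $s$ time derivatives, the model embedding should read $\tfrac1q=\tfrac12-\tfrac{2s}{m+2}$ rather than $\tfrac1q=\tfrac12-\tfrac{s}{m+2}$; your subsequent application to $\pa_t^j\nabla^k\tilde u\in\mathscr{H}^{l-j-k/2,\,2l-2j-k}$ already uses the correct gain $\tfrac{2l-2j-k}{m+2}$, so only the displayed statement of the model lemma is off by a factor of two, not the conclusion.
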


\subsection{Regularity theory for the linear problem \eqref{equ-initial-general}}\label{A. linear regualrity}

In the following discussion, we will demonstrate that a weak solution $u$ of equation \eqref{equ-initial-general} possesses higher-order derivatives by obtaining estimates for its difference quotients.

\begin{Lem}\label{lem:regularity2}
	If $u\in HH_a$ is a weak solution of \eqref{equ-initial-general} with $f\in LH_a^0$ and $u_0\in H^1(M)$ and $a>0$ is suitably large, then $u\in LH_a^2$ and the following estimate holds
	\[
	\|u\|_{LH_a^2}^2\leq C\big( \|u_0\|_{H^1(M)}^2+\|f\|_{LH_a^0}^2 \big),
	\]
	where $C>0$ depends only on $\ca,\cl,\nabla_\ig\ca$ and the manifold $M$.
\end{Lem}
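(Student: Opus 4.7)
The plan is to perform the standard second-order energy estimate by testing the equation \eqref{equ-initial-general} with $-\Delta_\ig u$ (weighted by $e^{-2at}$ for the time integration), then to invoke $H^2$-elliptic regularity on the closed manifold $M$. Since $u\in HH_a$ does not a priori admit $-\Delta_\ig u$ as an admissible test function, I would first carry out the argument under the stronger assumption that $u_0\in C^\infty(M)$ — in which case Corollary \ref{cor:existence HH-a} combined with Lemma \ref{lem:regularity3} (applied with $l$ sufficiently large) produces a solution that is smooth enough to justify the computation — and then pass to the limit by approximating $u_0\in H^1(M)$ in the $H^1$-norm by smooth functions $u_0^{(j)}$, using the uniqueness of weak solutions in $HH_a$ granted by Proposition \ref{prop:existence in HH-a}.

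Concretely, after multiplying \eqref{equ-initial-general} by $-e^{-2at}\Delta_\ig u$ and integrating over $M\times[0,\infty)$, the time term is handled by first integrating by parts in space and then applying the identity \eqref{identity1} to $\nabla_\ig u$, yielding
\[
\int_0^\infty e^{-2at}\int_M(-\Delta_\ig u)\pa_t u\,d\vol_\ig\,dt = a\|\nabla_\ig u\|_{LH_a^0}^2 - \tfrac12\|\nabla_\ig u_0\|_{L^2(M)}^2.
\]
The principal term is controlled from below by the uniform positivity $\ca\geq\de$, giving $\de\|\Delta_\ig u\|_{LH_a^0}^2$. The remaining cross-terms involving $\cl[u]$ and $f$ are bounded using Young's inequality together with assumption (A1), which provides $\|\cl[u]\|_{LH_a^0}\leq C\|u\|_{LH_a^1}$. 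Absorbing a small multiple of $\|\Delta_\ig u\|_{LH_a^0}^2$ to the left and substituting the bound from Lemma \ref{lem:regularity1} to control $\|u\|_{LH_a^1}^2$ in terms of $|u_0|_2^2$ and $\|f\|_{LH_a^0}^2$, one obtains
\[
\|\Delta_\ig u\|_{LH_a^0}^2 \leq C\bigl(\|u_0\|_{H^1(M)}^2 + \|f\|_{LH_a^0}^2\bigr).
\]
Finally, the fiberwise elliptic estimate $\|u(\cdot,t)\|_{H^2(M)}^2\leq C\bigl(|\Delta_\ig u(\cdot,t)|_2^2 + |u(\cdot,t)|_2^2\bigr)$, valid since $M$ is closed, promotes this to the desired $LH_a^2$-bound after another application of Lemma \ref{lem:regularity1}.

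The main obstacle is the rigorous justification of the test function $-\Delta_\ig u$. The cleanest route is the approximation scheme outlined above, but one must verify that the a priori estimate is uniform in the approximation (it is, since only $\|u_0\|_{H^1(M)}$ enters on the right-hand side) and that the limit of the approximating solutions $u^{(j)}\in HH_a$ is indeed the weak solution $u$; the latter uses uniqueness from Proposition \ref{prop:existence in HH-a} applied to the difference equation with zero right-hand side and initial data $u_0-u_0^{(j)}$, together with the continuous dependence on the data provided by Lemma \ref{lem:regularity1}. A secondary technical point is coordinating the size of $a$ across the two invocations of Lemma \ref{lem:regularity1} and the absorption argument, but choosing $a$ to exceed the maximum of the thresholds required by those estimates (and by Proposition \ref{prop:existence in HH-a}) suffices.
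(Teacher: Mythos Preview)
Your formal energy computation is correct and, at the level of the inequalities, parallels the paper's argument closely. The substantive difference is in how the test function is justified. The paper does \emph{not} test against $-\Delta_\ig u$; it works in local coordinate charts and tests with the spatial difference-quotient expression $\phi=\varDelta_{-h}(\eta^4\varDelta_h u)$, where $\eta$ is a cut-off in the chart and $\varDelta_h w(x)=h^{-1}(w(x+he_s)-w(x))$. This $\phi$ is only as regular as $u$ itself, hence is an admissible test function as soon as $u\in HH_a$; after shifting the difference operators one obtains a bound on $\|\eta^2\nabla_\ig(\varDelta_h u)\|_{LH_a^0}$ uniform in $h$, passes to the weak limit $h\to0$, and sums over charts. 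No prior $H^2$-regularity of $u$ is needed anywhere.

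Your approximation scheme, by contrast, has a circularity: you invoke Lemma~\ref{lem:regularity3} to manufacture a solution smooth enough to pair with $-\Delta_\ig u$, but in the paper's logical order Lemma~\ref{lem:regularity3} is proved \emph{from} Lemma~\ref{lem:regularity2} (via Corollary~\ref{cor:regularity2} and induction using the same difference-quotient machinery). Even for smooth $u_0$, Corollary~\ref{cor:existence HH-a} by itself only returns $u\in HH_a$; the extra spatial regularity is exactly what is being proved. Nor can you fall back on classical parabolic Schauder/$L^p$ theory, since $\cl$ is an abstract nonlocal operator constrained only by (A1)--(A3). (A smaller point: Lemma~\ref{lem:regularity3} also needs $f\in P_a^l$, so you would have to approximate $f$ as well, which you do not mention.) Your route is repairable---e.g.\ replace the appeal to Lemma~\ref{lem:regularity3} by a Galerkin approximation in the eigenfunctions of $-\Delta_\ig$ on the closed manifold $M$, for which the finite-dimensional approximants are smooth in $x$ and the pairing with $-\Delta_\ig u_N$ is rigorous---but as written the justification step is a gap. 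The paper's difference-quotient approach is chosen precisely to avoid this bootstrapping issue.
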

\begin{proof}
	For $r>0$, let us denote $B_r(0)\subset\R^m$ the open ball of radius $r$ centered at the origin.
	Since $M$ is compact, we can choose $r<inj(M)/3$ (where $inj(M)>0$ represents the injectivity radius of $M$) and $\{(U_i,\Psi_i)\}_{i=1}^N$ as a family of smooth local coordinate system on $M$ such that $\Psi_i: B_r(0)\to U_i\subset M$ is a diffeomorphism and $\cup_{i=1}^NU_i=M$. Let $\eta$ be a smooth cut-off function on $\R^m$ satisfying
	\[
	\eta(x)=\begin{cases}
		1 & x\in B_r(0), \\
		0 & x\in \R^m\setminus B_{2r}(0).
	\end{cases}
	\]
	We also denote $U_i^*=\Psi_i(B_{3r}(0))$.
	
	Given some $k\in\N$, let us assume that we already have the estimate for $u$ in $LH_a^{l+1}$, for each $l=0,\dots,k-1$, i.e.,
	\[
	\|u\|_{LH_a^{l+1}}^2\leq C\big( \|u\|_{H^1(M)}^2+\|f\|_{LH_a^0}^2 \big).
	\]
	Evidently, Lemma \ref{lem:regularity1} suggests the above estimate holds for $k=1$. To proceed, for $h\in\R\setminus\{0\}$, let us set $\phi=\varDelta_{-h}^k(\eta^4\varDelta_h^k u)$ in $U_i^*$, where $\eta$ and the finite difference operator 
	\[
	\varDelta_h w = \frac{w(x+he_s)-w(x)}h
	\]
	are lifted to $M$ using the coordinate map $\Psi_i$ and $\varDelta_h^k$ means acting $\varDelta_h$ for $k$ times. In the above formulation, $e_s$ is an arbitrarily fixed element of the local frame $\{e_1,\dots,e_m\}$. Outside $U_i^*$ we simply extend $\phi$ to be zero. At this moment, although we don't have uniform estimate in $h$ for the norms of $\phi$, the function is at least as regular as $u$.
	
	Testing \eqref{equ-initial-general} with $\phi$, we obtain
	\begin{\equ}\label{test}
		\inp{\pa_tu}{\phi}_{LH_a^0}+\int_0^\infty e^{-2at}\msa_t(u,\phi)dt=\inp{f}{\phi}_{LH_a^0}.
	\end{\equ}
	Shifting difference operators with the discrete analogue of partial integration in space, we soon deduce that
	\begin{\equ}\label{expand1}
		\aligned
		&\inp{\pa_t(\eta^2\varDelta_h^k u)}{\eta^2\varDelta_h^k u}_{LH_a^0}-\int_0^\infty e^{-2at}\int_M\ca^*_k\De_\ig(\varDelta_h^ku)\eta^4\varDelta_h^k u \, d\vol_\ig dt \\
		&\quad +(-1)^k\inp{\cl[u]}{\phi}_{LH_a^0}=(-1)^k\inp{f}{\phi}_{LH_a^0} + E
		\endaligned
	\end{\equ}
	where we have set
	\[
	E=\sum_{j=1}^k \begin{pmatrix}
		k\\
		j
	\end{pmatrix}\int_0^\infty e^{-2at}\int_M \varDelta_h^j\ca_{k-j}^* \De_\ig(\varDelta_h^{k-j}u)\eta^4\varDelta_h^k u\, d\vol_\ig dt.
	\]
	Additional explanation may be necessary: the function $\ca_j^*(\cdot,\cdot)$, $j=0,\dots,k$, which is given by $\ca_j^*(x,t)=\ca(x+jhe_s,t)$, arises through an application of the discretised product rule
	\[
	\varDelta_h(wz)(x)=w(x+he_s)\varDelta_h z(x)+z(x)\varDelta_h w(x).
	\]
	Since we have assumed that $\ca$ is smooth, the functions $\ca_0^*,\dots,\ca_k^*$ share all pointwise properties.
	
	Focusing on $E$, we have
	\begin{\equ}\label{E1}
		\aligned
		|E|&\leq C\int_0^\infty e^{-2at}\int_M \sum_{j=1}^k|\nabla_\ig(\varDelta_h^{k-j}u)|\big( |\nabla_\ig(\eta^4\varDelta_h^k u)|+ |\eta^4\varDelta_h^k u| \big)d\vol_\ig dt \\[0.3em]
		&\leq C_\vr\|u\|_{LH_a^k}^2+\vr \|\eta^4\nabla_\ig(\varDelta_h^k u)\|_{LH_a^0}^2,
		\endaligned
	\end{\equ}
	where we have used \cite[Lemma 7.23]{GT} in the last inequality, $\vr>0$ is a small constant whose value will be given later and $C_\vr>0$ depends on $\vr$ and the $L^\infty$-norm of derivatives of the coefficients $\ca_1^*,\dots,\ca_k^*$ (they are also controlled by the corresponding norms of $\ca$). Regarding the remaining terms in \eqref{expand1}, we can address the first term on the left using \eqref{identity1}, that is
	\begin{\equ}\label{E2}
		\inp{\pa_t(\eta^2\varDelta_h^k u)}{\eta^2\varDelta_h^k u}_{LH_a^0}=a\|\eta^2\varDelta_h^k u\|_{LH_a^0}^2-\frac12|\eta^2\varDelta_h^k u_0|_2^2.
	\end{\equ}
	And for the elliptic part, we have
	\[
	\aligned
	E^*&:=-\int_0^\infty e^{-2at}\int_M\ca^*_k\De_\ig(\varDelta_h^ku)\eta^4\varDelta_h^k u \, d\vol_\ig dt \\
	&=\int_0^\infty e^{-2at}\int_M \big(\eta^4\ca_k^*\nabla_\ig(\varDelta_h^k u)\cdot\nabla_\ig(\varDelta_h^k u)+\nabla_\ig(\eta^4\ca_k^*)\cdot\nabla_\ig(\varDelta_h^k u)\varDelta_h^ku \big) d\vol_\ig dt \\
	&\geq \int_0^\infty e^{-2at}\int_M \big(\eta^4\ca_k^*\nabla_\ig(\varDelta_h^k u)\cdot\nabla_\ig(\varDelta_h^k u)- C\eta^2|\nabla_\ig(\varDelta_h^k u)||\varDelta_h^ku|\big) d\vol_\ig dt ,
	\endaligned
	\]
	where $C>0$ depends only on the gradient of $\eta\ca_k^*$.
	And so, using the ellipticity of $\ca_k^*$ and Young's inequality, we find
	\begin{\equ}\label{E3}
		\aligned
		E^*&\geq \de\|\eta^2\nabla_\ig(\varDelta_h^k u)\|_{LH_a^0}^2-\frac\de4\|\eta^2\nabla_\ig(\varDelta_h^k u)\|_{LH_a^0}^2- C_\de\|\varDelta_h^k u\|_{LH_a^0}^2 \\
		&\geq\frac{3\de}4\|\eta^2\nabla_\ig(\varDelta_h^k u)\|_{LH_a^0}^2- C_\de\| u\|_{LH_a^k}^2,
		\endaligned
	\end{\equ}
	where we have used \cite[Lemma 7.23]{GT} again in the last inequality. Now, let us consider the two remaining terms in \eqref{expand1}. Actually, the estimates can be easily obtained if we now explicitly take $k=1$:
	\begin{\equ}\label{E4}
		|\inp{\cl[u]}{\phi}_{LH_a^0}|\leq C_\cl \|u\|_{LH_a^1} \|\eta^4\varDelta_h u\|_{LH_a^1}
	\end{\equ}
	where $C_\cl>0$ comes from the assumption 
	(A1) for $\cl$, and meanwhile,
	\begin{\equ}\label{E5}
		\aligned
		|\inp{f}{\phi}_{LH_a^0}|&\leq \|f\|_{LH_a^0}\|\varDelta_{-h}(\eta^4\varDelta_h u)\|_{LH_a^0} \\
		&\leq \|f\|_{LH_a^0}\|\eta^4\varDelta_h u\|_{LH_a^1}.
		\endaligned
	\end{\equ}
	Combining \eqref{E1}-\eqref{E5}, we have
	\[
	\aligned
	\frac{3\de}4\|\eta^2\nabla_\ig(\varDelta_h u)\|_{LH_a^0}^2- C_\de\|u\|_{LH_a^1}^2&\leq C_\vr\|u\|_{LH_a^1}^2+\vr\|\eta^2\nabla_\ig(\varDelta_h u)\|_{LH_a^0}^2 + \frac12|\eta^2\varDelta_h u_0|_2^2 \\
	&\qquad +C_\cl \|u\|_{LH_a^1} \|\eta^4\varDelta_h u\|_{LH_a^1}+\|f\|_{LH_a^0}\|\eta^4\varDelta_h u\|_{LH_a^1}.
	\endaligned
	\]
	Thus, by choosing $\vr\leq \de/4$, invoking Lemma \ref{lem:regularity1} and recalling the hypothesis that $u_0\in H^1(M)$, we have
	\[
	\aligned
	\|\eta^2\nabla_\ig(\varDelta_h u)\|_{LH_a^0}^2&\leq C\big( \|u_0\|_{H^1(M)}^2+ \|u\|_{LH_a^1}^2+\|f\|_{LH_a^0}^2 \big)\\
	&\leq C\big( \|u_0\|_{H^1(M)}^2+\|f\|_{LH_a^0}^2 \big),
	\endaligned
	\]
	where $C>0$ depends only on $\ca,\cl$ and $\nabla_\ig\ca$.
	And hence, we have an uniform $LH_a^1$-bound in $h$ for all the difference quotients of first order in the open set $U_i$, which therefore converge weakly to the weak derivatives satisfying the estimate 
	\[
	\|\nabla_\ig^2 u\|_{LH_a^0(U_i)}^2\leq C\big( \|u_0\|_{H^1(M)}^2+\|f\|_{LH_a^0}^2 \big).
	\]
	Here we use $LH_a^0(U_i)$ to symbolize the corresponding function space on $U_i$. Finally, by summing over all coordinate charts, we obtain the desired result.
\end{proof}

As a direct consequence of Lemma \ref{lem:regularity2}, $u$ is smooth enough that we pointwisely have $\pa_t u =f+ \ca\De_\ig u-\cl[u]\in LH_a^0$. This, together with the fact $LH_a^0=P_a^0$, gives us an estimate for the time derivative of $u$:
\begin{Cor}\label{cor:regularity2}
	If $u\in HH_a$ is a weak solution of \eqref{equ-initial-general} with $f\in P_a^0$ and $u_0\in H^1(M)$, then $u\in P_a^1$ and
	\[
	\|u\|_{P_a^1}^2\leq C\big( \|u_0\|_{H^1(M)}^2+\|f\|_{P_a^0}^2 \big)
	\]
	provided $a>0$ is large enough in the sense of Proposition \ref{prop:existence in HH-a} and Lemma \ref{lem:regularity1}, where $C>0$ depends only on $\ca,\cl,\nabla_\ig\ca$ and the manifold $M$.
\end{Cor}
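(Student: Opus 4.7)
The plan is to obtain the corollary essentially for free from Lemma \ref{lem:regularity2} by exploiting the structure of the equation itself. Recall that, by definition,
\[
\|u\|_{P_a^1}^2=\|u\|_{LH_a^2}^2+\|\pa_t u\|_{LH_a^0}^2,
\]
and the identification $LH_a^0=P_a^0$ is built into the definitions. So I only need to control $\pa_t u$ in $LH_a^0$; the spatial part is already handled.

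First I would invoke Lemma \ref{lem:regularity2} on $u$. The hypotheses are met (since $P_a^0=LH_a^0$ gives $f\in LH_a^0$, and $u_0\in H^1(M)$ is assumed), so this yields $u\in LH_a^2$ together with the estimate
\[
\|u\|_{LH_a^2}^2\leq C\bigl(\|u_0\|_{H^1(M)}^2+\|f\|_{LH_a^0}^2\bigr).
\]
In particular, $\De_\ig u\in LH_a^0$.

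Next I would rearrange the PDE itself. Since $u\in LH_a^2$ is now a genuinely twice-spatially-differentiable function and the equation holds in the weak sense, one has the pointwise (a.e.) identity
\[
\pa_t u=\ca(x,t)\De_\ig u-\cl[u]+f.
\]
Each term on the right belongs to $LH_a^0$: the first because $\ca$ is bounded and $\De_\ig u\in LH_a^0$; the second by assumption (A1), which gives $\|\cl[u]\|_{LH_a^0}\leq C\|u\|_{LH_a^1}$; and the third by hypothesis. Combining with the bound just derived for $\|u\|_{LH_a^2}^2$ and with the $LH_a^1$-bound from Lemma \ref{lem:regularity1}, I obtain
\[
\|\pa_t u\|_{LH_a^0}^2\leq C\bigl(\|u_0\|_{H^1(M)}^2+\|f\|_{LH_a^0}^2\bigr),
\]
with $C$ depending only on $\ca$, $\cl$, $\nabla_\ig\ca$ and $M$. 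Adding this to the $LH_a^2$-estimate gives the claimed $P_a^1$ bound.

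There is no real obstacle here; this is essentially a bootstrap that packages the information produced by Lemma \ref{lem:regularity2} into the $P_a^l$-hierarchy. The only thing to be mildly careful about is that one must justify using the PDE pointwise rather than only in the weak sense, which is legitimate because $u\in LH_a^2$ and $\cl[u],f\in LH_a^0$ imply $\pa_t u$ is well-defined as an $LH_a^0$ function by difference quotients (exactly as in the proof of Lemma \ref{lem:regularity2}, after passing to local coordinate charts and using the standard equivalence between weak time-derivatives and distributional identities). This same strategy is what will be iterated (with higher-order finite differences and repeated time differentiation) to prove the general $P_a^{l+1}$-estimate in Lemma \ref{lem:regularity3}.
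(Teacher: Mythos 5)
Your proposal is correct and matches the paper's argument: the paper also obtains this corollary by applying Lemma \ref{lem:regularity2} to get $u\in LH_a^2$ and then reading $\pa_t u = f + \ca\De_\ig u - \cl[u]$ off the equation, with each term controlled in $LH_a^0$ via the boundedness of $\ca$, assumption (A1), and Lemmas \ref{lem:regularity1} and \ref{lem:regularity2}. Your additional remark justifying the pointwise use of the PDE is a sensible elaboration of what the paper states only briefly.
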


Now we are ready to give the $P_a^l$-version of the a priori energy estimate for parabolic equation \eqref{equ-initial-general}. 

\begin{proof}[Proof of Lemma \ref{lem:regularity3}]
	In this case, let us take a smooth cut-off function $\eta$ as in the proof of Lemma \ref{lem:regularity2} and choose $\phi=\varDelta_{-h}^{2l+k}(\eta^4\varDelta_h^{2l+k}u)$ for $h\neq0$ as the test function in \eqref{test}. Then we can proceed almost the same as before. The difference here lies in the bounds for the integrals, which come from the homogeneous term $f$ and the linear term $\cl[u]$. Indeed, we have
	\[
	\aligned
	|\inp{f}{\phi}_{LH_a^0}|&=\big|\inp{\varDelta_h^{2l}f}{\varDelta_{-h}^k(\eta^4\varDelta_h^{2l+k}u)}_{LH_a^0}\big|\leq \|f\|_{LH_a^{2l}}\|\varDelta_{-h}^k(\eta^4\varDelta_h^{2l+k}u)\|_{LH_a^0} \\
	&\leq \|f\|_{LH_a^{2l}}\|\eta^4\varDelta_h^{2l+k}u\|_{LH_a^k}
	\endaligned
	\]
	and, by recalling the hypothesis that $u\in P_a^l$ and the embedding $P_a^l\hookrightarrow LH_a^{2l}$, we have $\cl[u]\in LH_a^{2l}$ and hence
	\[
	\aligned
	|\inp{\cl[u]}{\phi}_{LH_a^0}|&=\big|\inp{\varDelta_h^{2l}\cl[u]}{\varDelta_{-h}^k(\eta^4\varDelta_h^{2l+k}u)}_{LH_a^0}\big|\leq\|\cl[u]\|_{LH_a^{2l}}\|\eta^4\varDelta_h^{2l+k}u\|_{LH_a^k}\\[0.2em]
	&\leq\|\cl[u]\|_{P_a^l}\|\eta^4\varDelta_h^{2l+k}u\|_{LH_a^k}\leq C_{\cl,l}\|u\|_{P_a^l}\|\eta^4\varDelta_h^{2l+k}u\|_{LH_a^k}.
	\endaligned
	\]
	Therefore, very similar to what have done in the proof of Lemma \ref{lem:regularity2}, we can deduce that
	\begin{\equ}\label{E6}
		\aligned
		\frac{3\de}4\|\eta^2\nabla_\ig(\varDelta_h^{2l+k}u)\|_{LH_a^0}^2&\leq C_\vr\|u\|_{LH_a^{2l+k}}^2+\vr\|\eta^2\nabla_\ig(\varDelta_h^{2l+k}u)\|_{LH_a^0}^2 +\frac12|\eta^2\varDelta_h^{2l+k}u_0|_2^2 \\
		&\qquad  + C_{\cl,l}\|u\|_{P_a^l}\|\eta^4\varDelta_h^{2l+k}u\|_{LH_a^k} + \|f\|_{LH_a^{2l}}\|\eta^4\varDelta_h^{2l+k}u\|_{LH_a^k}.
		\endaligned
	\end{\equ}
	And, by taking $\vr\leq\de/4$, $k=0$ and invoking the embedding $P_a^l\hookrightarrow LH_a^{2l}$, we first find that
	\[
	\|\eta^2\nabla_\ig(\varDelta_h^{2l}u)\|_{LH_a^0}^2\leq C\big( \|u_0\|_{H^{2l}(M)}^2+\|u\|_{P_a^l}^2+\|f\|_{P_a^l}^2 \big)
	\]
	where $C>0$ is independent of $h$. Hence, by taking to the limit $h\to0$ and summing over all coordinate charts, we obtain
	\begin{\equ}\label{u-LHa2l+1}
		\|u\|_{LH_a^{2l+1}}^2\leq C\big( \|u_0\|_{H^{2l}(M)}^2+\|u\|_{P_a^l}^2+\|f\|_{P_a^l}^2 \big)
	\end{\equ}
	with $C>0$ depends only on $l,\ca,\cl,\nabla_\ig\ca$ and the manifold $M$. Next, let us take $k=1$ in \eqref{E6} and use Young's inequality to obtain
	\[
	\|\eta^2\nabla_\ig(\varDelta_h^{2l+1}u)\|_{LH_a^0}^2\leq C \big( \|u_0\|_{H^{2l+1}(M)}^2+\|u\|_{LH_a^{2l+1}}^2 + \|u\|_{P_a^l}^2 +\|f\|_{LH_a^{2l}}^2 \big).
	\]
	Once again, by taking to the limit $h\to0$ and summing over all coordinate charts, we soon get
	\begin{\equ}\label{u-LHa2l+2}
		\aligned
		\|u\|_{LH_a^{2l+2}}&\leq C \big( \|u_0\|_{H^{2l+1}(M)}^2+\|u\|_{LH_a^{2l+1}}^2 + \|u\|_{P_a^l}^2 +\|f\|_{LH_a^{2l}}^2 \big) \\
		&\leq C \big( \|u_0\|_{H^{2l+1}(M)}^2+ \|u\|_{P_a^l}^2 +\|f\|_{LH_a^{2l}}^2 \big)
		\endaligned
	\end{\equ}
	where the last inequality follows from \eqref{u-LHa2l+1}. 
	
	So much for the spatial regularity, and we shall turn to consider the time derivatives. Notice that we have assumed $u\in P_a^l$ is a solution to $\pa_t u=f+\ca\De_\ig u -\cl[u]$, thus we have $\pa_t^su$ exists almost everywhere in $LH_a^{2(l-s)}$ for $s=0,\dots,l$ and
	\[
	\aligned
	\|\pa_t^{s+1}u\|_{LH_a^{2(l-s)}}^2&\leq C_s\Big( \|\pa_t^s f\|_{LH_a^{2(l-s)}}^2 + \|\pa_t^s\cl[u]\|_{LH_a^{2(l-s)}}^2 +\sum_{j\leq s}\|\pa_t^j u\|_{LH_a^{2(l+1-j)}}^2 \Big) \\
	&\leq C_s\Big( \|f\|_{P_a^l}^2 + \|u\|_{P_a^l}^2 +\sum_{j\leq s}\|\pa_t^j u\|_{LH_a^{2(l+1-j)}}^2 \Big)
	\endaligned
	\]
	with $C_s>0$ depends on the $L^\infty$-norms of $\ca,\pa_t\ca,\dots,\pa_t^s\ca$. Now, starting from Corollary \ref{cor:regularity2}, \eqref{u-LHa2l+2}, $s=0$ and iterating up to $s=l$, we obtain the desired conclusion.
\end{proof}

\subsection{Technical results}\label{A. Technical results}

In this section, let us collect some basic properties of the map $\msf$ defined in \eqref{the-map-msf}, which are crucial in the proof of our short time existence result.
	
	\begin{Lem}\label{lem:msf-C1}
		For $l\in\N$, $l>(m+6)/2$ and $T>0$ small, let $u\in P_{+,s}^{l+1}(T)$ be such that $\msf$ is well-defined. It holds that $\msf[u]\in H^{2l+1}(M)\times P^l(T)$. Moreover, for a $\cq$-admissible neighborhood $U$ of $u$, the map $\msf$ is of class $C^1$ on $U$.
	\end{Lem}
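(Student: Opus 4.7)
My plan is to break the statement into two assertions and attack them separately: first the range inclusion $\msf[u]\in H^{2l+1}(M)\times P^l(T)$, then the $C^1$ smoothness on a $\cq$-admissible neighborhood. The trace component is standard: since $u\in P^{l+1}(T)$ has $2(l+1)$ ``parabolic derivatives,'' the usual trace theory for parabolic Sobolev spaces yields $u(\cdot,0)\in H^{2l+1}(M)$. For the second component, I would use the assumption $l>(m+6)/2$ together with the parabolic Sobolev embeddings of Proposition \ref{Parabolic-Sobolev-Embeddings} to guarantee that $P^l(T)$ continuously embeds into $C^1(M\times[0,T])$ and is a Banach algebra (at least after one spares one derivative). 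Combined with $u\geq\de>0$ uniformly on $M\times[0,T]$ (which follows from $u\in P_{+,1}^{l+1}(T)$ and continuity), this forces smooth pointwise operations such as $u\mapsto u^{\al}$ with $\al\in\{-\tfrac4{m-2},\tfrac{m-6}{m-2},-\tfrac m{m-2}\}$ to define continuous maps $P^{l+1}_{+,1}(T)\to P^l(T)$. The terms $u^{-4/(m-2)}\De_\ig u$, $\scal_\ig u^{(m-6)/(m-2)}$, and $\pa_tu$ are then each in $P^l(T)$.

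The nonlocal piece is where the work lies. The scalar factor $\int_M uL_\ig u\,d\vol_\ig$ depends only on $t$ and its regularity in $t$ follows by differentiating under the integral sign, using $u\in P^{l+1}(T)$ and the algebra property. The genuinely delicate factor is $|\psi_u|_\ig^2$. Here I would invoke Lemma \ref{lem:Ck-perturbed-eigenvalue} in its full strength: since $u\in P^{l+1}(T)\hookrightarrow C^{l'}(M\times[0,T])$ for some $l'$ growing with $l$, the map $t\mapsto(\lm_u(t),\psi_u(t))$ is $C^{l'}$ into $\R\times H^1(M,\mbs(M))$, and by elliptic regularity for $D_\ig-\lm_uu^{2/(m-2)}$ applied at each time, $\psi_u(\cdot,t)$ inherits the spatial smoothness of $u(\cdot,t)$. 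Differentiating the identity $\Phi(u,\lm_u,\psi_u)\equiv0$ in $t$ gives a recursive formula for $\pa_t^j\psi_u$ in terms of $\pa_t^iu$ with $i\leq j$ (exactly as in the lemma), which leads to the estimate $\|\psi_u\|_{P^l(T,\mbs(M))}\leq C(\|u\|_{P^{l+1}(T)})$. Plugging this back yields $|\psi_u|_\ig^2u^{-m/(m-2)}\in P^l(T)$ and hence $\mathscr{Q}_*[u]\in P^l(T)$.

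For the $C^1$ assertion on a $\cq$-admissible neighborhood $U$, I would first compute the formal Fr\'echet derivative $D\msf[u](w)$ by direct linearization; all terms not involving the eigenspinor are polynomial combinations of $u^\al$, $w$, and spatial derivatives, and their differentiability in $P^{l+1}(T)$ follows again from the algebra property and smoothness of $\al$-power maps on $u\geq\de$. For the eigenspinor term, I would use the implicit function theorem version of Lemma \ref{lem:Ck-perturbed-eigenvalue}, but now in the direction of $w\in P^{l+1}(T)$ rather than in $t$: the same inversion of $\nabla_{(\lm,\psi)}\Phi$ that produced the time derivative $\psi'(t)$ also furnishes the Gateaux derivative $D_u\psi_u[w]$, with the explicit formula involving $(u^{-2/(m-2)}D_\ig-\lm_u)^{-1}\circ(I-P_{\lm_u})(u^{-1}w\psi_u)$. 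Continuity of $u\mapsto D\msf[u]$ as a map $U\to\msl(P^{l+1}(T),H^{2l-1}(M)\times P^{l-1}(T))$ then reduces to continuity of $u\mapsto(\lm_u,\psi_u)$ and of the resolvent $(u^{-2/(m-2)}D_\ig-\lm_u)^{-1}$ restricted to the orthogonal complement of the eigenspace, both of which are consequences of standard analytic perturbation theory as already used in Section \ref{sec:eigenvalues}.

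The main obstacle I anticipate is the uniform handling of the resolvent factor across $u$ in the neighborhood $U$: one needs a spectral gap around $\lm_u$ that is uniform for $u\in U$ and $t\in[0,T]$, so that $(u^{-2/(m-2)}D_\ig-\lm_u)^{-1}\circ(I-P_{\lm_u})$ is a bounded operator with norms controlled independently of $(u,t)$. This uniform gap is precisely what the $\cq$-admissibility of $U$ ensures, but it must be combined with the continuous dependence of the spectrum of $D_{\ig^u}$ on $u$ (via \cite{Bar1996}) to convert a pointwise gap into a bound on the resolvent in the required operator norm. Once this uniform bound is in place, the remaining estimates are routine applications of the algebra property of $P^l(T)$ and the parabolic embeddings.
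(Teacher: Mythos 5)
Your proposal is correct and follows essentially the same route as the paper: split $\msf$ into the trace, the linear $\pa_t$ part, the local quasilinear terms (handled by the algebra property of $P^l(T)$ and positivity of $u$), and the nonlocal eigenspinor term, whose differentiability is obtained from the implicit-function-theorem machinery of Lemma \ref{lem:Ck-perturbed-eigenvalue} together with a uniform resolvent bound on $(u^{-2/(m-2)}D_\ig-\lm_u)^{-1}\circ(I-P_{\lm_u})$ coming from the eigenvalue growth estimate and B\"ar's continuity of the Dirac spectrum -- exactly the paper's argument. The only step you compress is the passage from the finitely many \emph{classical} time-derivatives of $(\lm_u,\psi_u)$ that Lemma \ref{lem:Ck-perturbed-eigenvalue} actually provides (only up to order $k_l\approx l-[\tfrac{m+2}{4}]$, since $P^{l+1}(T)$ embeds only into $C^{k_l}$) to the full set of $l+1$ weak time-derivatives needed for the $P^l(T)$ bound on $|\psi_u|_\ig^2$; the paper supplies this via an explicit induction in $L^2$-based spaces, losing two spatial derivatives per time derivative, together with a H\"older bookkeeping showing that in each product at most one factor can fall below the $C^{k_l}$ threshold.
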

	\begin{proof}
		Note that the first component of $\msf$, i.e., the map $v\mapsto v(\cdot,0)$ is linear and bounded from $P^{l+1}(T)$ to $H^{2l+1}(M)$. As a result, it falls within the class $C^1$. Additionally, the map $v\mapsto \pa_t v$ is also linear and bounded, by taking functions from $P^{l+1}(T)$ to $P^l(T)$. Therefore, it is likewise of class $C^1$. Now, it remains to show that the following two maps 
		\[
		\cq_1[v]:=c_m v^{-\frac4{m-2}}\De_{\ig} v-\scal_{\ig} v^{\frac{m-6}{m-2}}
		\]
		and
		\[ 
		\cq_2[v]:=\Big(\int_M v L_{\ig}v \,d\vol_{\ig}\Big)|\psi_v|_{\ig}^2 v ^{-\frac m{m-2}}
		\]
		belong to $C^1(U,P^l(T))$ for some neighborhood  $U\subset P_{+,s}^{l+1}(T)$ of $u$.
		
		The analysis for the $C^1$-property of the operator $\cq_1$ can be found in \cite[Lemma 2.5]{MM}, where we only need to assume that $l>m/4$ to obtain $\cq_1\in C^1(P_+^{l+1}(T), P^l(T))$. Hence, in what follows, we will focus on the operator $\cq_2$, for which we have to pay more attention on the Dirac eigenspinor. To proceed, let $U\subset P_{+,s}^{l+1}(T)$ be an open and bounded neighborhood of the given function $u$ so that the simple eigenvalue $t\mapsto\lm_v(t)$ is well-defined for all $v\in U$, then our first step is to prove that $\cq_2\in C(U,P^{l+1}(T))$. 
		
		Let us fix $v\in U$. In order to show that $\cq_2[v]\in P^{l+1}(T)$, we need to establish some basic estimates for derivatives of the eigenspinor $\psi_v$. For this purpose, let us go back to Lemma \ref{lem:Ck-perturbed-eigenvalue}, where the derivation of the eigenpair $(\lm_v(t),\psi_v(t))$ is presented, and we shall firstly consider the growth of $\lm_v(t)$ with respect to $t$. 
		
		In general cases, let $\psi(t)$ be an eigenspinor of $v(\cdot,t)^{-\frac2{m-2}}D_\ig$ with eigenvalue $\lm(t)$ so that $\lm(0)\in[-n_0,n_0]\setminus\{0\}$. Using the expression of $\lm'$ in Lemma \ref{lem:Ck-perturbed-eigenvalue}, it follows from the $P^{l+1}$-boundedness of $v$ and the parabolic Sobolev embedding $P^{l+1}(T)\hookrightarrow C^1(M\times[0,T])$ for $l>(m+2)/4$ that there exists a positive constant $C>0$ (independent of $v\in U$) such that
		\[
		|\lm'(t)|\leq C|\lm(t)| \quad \text{for } t\in[0,T].
		\] 
		The solution of this differential inequality leads to the following bound for the growth of the perturbed eigenvalues:
		\[
		|\lm(t)-\lm(0)|\leq n_0\big(e^{Ct}-1\big), \quad t\in[0,T].
		\]
		
		Let us write $\lm_{0,1}<\lm_{0,2}<\dots<\lm_{0,j_0}$ for all the simple eigenvalues of $v(\cdot,0)^{-\frac2{m-2}}D_\ig$ that belong to $[-n_0,n_0]\setminus\{0\}$. Let $d_1,\dots,d_{j_0}>0$ be such that the intervals $I_j=[\lm_{0,j}-d_j,\lm_{0,j}+d_j]$ for $j=1,\dots,j_0$ are disjoint. Set $\lm_{0,-}$ for the biggest eigenvalue in $(-\infty,-n_0)$ and $\lm_{0,+}$ for the smallest eigenvalue in $(n_0,+\infty)$. By taking $d_1$ and $d_{j_0}$ smaller if necessary, we can assume that $\lm_{0,-}\not\in I_1$ and $\lm_{0,+}\not\in I_{j_0}$ (see Fig. \ref{F1}).
		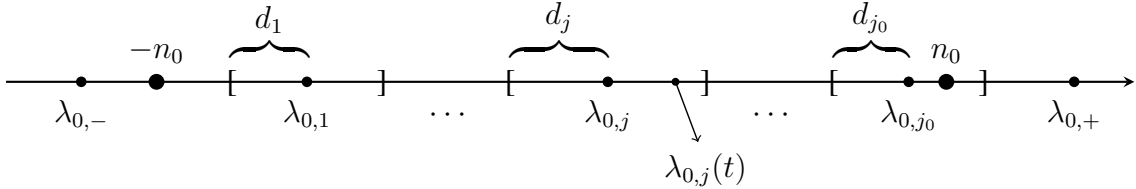
\begin{figure}[ht]
			\centering
			\begin{tikzpicture}[scale=1]
				\draw[thick,->,>=stealth,line width=0.15ex] (0,0) -- (15,0); 
				\coordinate [label=-90:$ \lm_{0,-}$] (a) at (1,-0.1); 
				\draw (a);
				\coordinate [label=90:$-n_0$] (b) at (2,0.1); 
				\draw (b);
				\coordinate [label=-90:$ \lm_{0,1}$] (c) at (4,-0.1); 
				\draw (c);
				\coordinate [label=-90:$\cdots$] (zz) at (5.9,-0.2); 
				\draw (zz);
				\coordinate [label=-90:$ \lm_{0,j}$] (d) at (8,-0.1); 
				\draw (d);
				\coordinate [label=-90:$\cdots$] (zzz) at (10.2,-0.2); 
				\draw (zzz);
				\coordinate [label=-90:$ \lm_{0,j_0}$] (e) at (12,-0.1); 
				\draw (e);
				\coordinate [label=90:$n_0$] (f) at (12.5,0.1); 
				\draw (f);
				\coordinate [label=-90:$ \lm_{0,+}$] (g) at (14.2,-0.1); 
				\draw (g);
				\foreach \p in {a,c,d,e,g} \fill (\p)++(0,0.1) circle (2pt);
				\foreach \p in {b,f} \fill (\p)++(0,-0.1) circle (3pt);
				\coordinate [label=90:$\text{[}$] (bb1) at (6.7,-0.34); 
				\draw (bb1);
				\coordinate [label=90:$\text{]}$] (bb2) at (9.3,-0.34); 
				\draw (bb2);
				\coordinate [label=90:$\text{[}$] (b1) at (3,-0.34); 
				\draw (b1);
				\coordinate [label=90:$\text{]}$] (b2) at (5,-0.34); 
				\draw (b2);
				\coordinate [label=90:$\text{[}$] (bbb1) at (11,-0.34); 
				\draw (bbb1);
				\coordinate [label=90:$\text{]}$] (bbb2) at (13,-0.34); 
				\draw (bbb2);
				\coordinate [label=90:$\overbrace{\hspace{2.5em}}$] (ob1) at (3.5,0.15); 
				\draw (ob1);
				\coordinate [label=90:$d_1$] (d1) at (3.5,0.5); 
				\draw (d1);
				\coordinate [label=90:$\overbrace{\hspace{3.1em}}$] (ob2) at (7.35,0.15); 
				\draw (ob2);
				\coordinate [label=90:$d_j$] (dj) at (7.35,0.5); 
				\draw (dj);
				\coordinate [label=90:$\overbrace{\hspace{2.4em}}$] (ob3) at (11.5,0.15); 
				\draw (ob3);
				\coordinate [label=90:$d_{j_0}$] (dj0) at (11.5,0.5); 
				\draw (dj0);
				\coordinate [label=-90:$ \lm_{0,j}(t)$] (h) at (9.3,-0.8); 
				\draw (h);
				\foreach \p in {h} \fill (\p)++(-0.4,0.8) circle (1.5pt);
				\draw[thick,->,line width=0.1ex] (8.9,0) -- (9.2,-0.8); 
			\end{tikzpicture}
			\caption{\it Simple eigenvalues of $v(\cdot,t)^{-\frac2{m-2}}D_\ig$ that belong to $(\lm_{0,-},\lm_{0,+})\setminus\{0\}$}\label{F1}
		\end{figure}
		
		To ensure that, for each $j=1,\dots,j_0$, the perturbed eigenvalue $\lm_{0,j}(t)$ belongs $I_j$, we only need to narrow the time interval $[0,T]$ so that
		\[
		|\lm_{0,j}(t)-\lm_{0,j}|\leq n_0\big( e^{CT}-1 \big)\leq d_j \quad \text{for all } j=1,\dots,j_0.
		\]
		And, by \cite[Proposition 7.1]{Bar1996}, we can also ensure that none of the perturbations of the eigenvalues that initially belonged to $\R\setminus[-n_0,n_0]$ coincide with $\lm_{0,1}(t),\dots,\lm_{0,j_0}(t)$. Therefore,  once the initial datum $\lm_v(0)\in\{\lm_{0,1},\dots,\lm_{0,j_0}\}\cap[-n_0,n_0]\setminus\{0\}$ is given for a function $v\in U$, we can find a constant $C_*>0$ (independent of $v$ and $t$) such that 
		\begin{\equ}\label{inverse-bdd}
		\big\|\big(v(\cdot,t)^{-\frac2{m-2}}D_\ig-\lm_v(t)\big)^{-1}\circ(I-P_{\lm_v(t)})\big\|_{L^2\to L^2} \leq C_*.
		\end{\equ}
		Hence, from Lemma \ref{lem:Ck-perturbed-eigenvalue}, we find $\psi_v'(\cdot)$ exists and is $L^2$-uniformly bounded with respect to all $v\in U$. Actually the above proof gives us more,  namely we can conclude from the elliptic regularity of Dirac operator that the map $t\mapsto \psi_v(t)$ induces a $C^1$-curve in the space of spinors.
		
		\medskip
		
		Taking into account a function $v\in U\subset P^{l+1}(T)$ has weak time-derivatives up to order $l+1$, in order to prove that $\cq_2\in C(U,P^{l+1}(T))$, we need to show that the map
		\[
		v\mapsto \pa_t^j\cq_2[v]
		\]
		belongs to $C(U,L^2([0,T],H^k(M)))$ whenever $j=0,1,\dots,l+1$ and $2j+k\leq 2(l+1)$.
		
		We can assume from now on that $k_l\geq1$ is the biggest integer so that $P^{l+1}(T)\hookrightarrow C^{k_l}(M\times[0,T])$. By the parabolic Sobolev embeddings (see Proposition \ref{Parabolic-Sobolev-Embeddings}), it follows easily that
		\[
		l=\Big[\frac{m+2}4\Big]+k_l
		\]
		where $[r]$ stands for the largest integer that doesn't exceed $r\in(0,+\infty)$. To estimate the time-derivatives of $\cq_2[v]$, we have to show that the corresponding derivatives of  $\psi_v$ exist and possess some necessary regularity. In this direction, we can first apply Lemma \ref{lem:Ck-perturbed-eigenvalue}  to obtain the time-derivatives of $\psi_v$ up to order $k_l$ which are all continuous in $t$. To go further, it can be seen from an induction argument that, up to certain constants, $\pa_t^j\cq_2[v]$ is a summation of functions in the form of
		\begin{\equ}\label{pa-j-cq-2}
			\Big(\int_M (\pa_t^{\al_1}v)L_\ig(\pa_t^{\al_2}v)d\vol_{\ig}\Big) \cdot \real \big(\pa_t^{\bt_1}\psi_v,\pa_t^{\bt_2}\psi_v\big)_\ig \cdot (v ^{-\ga_0})(\pa_tv)^{\ga_1}\cdots(\pa_t^{n_j}v)^{\ga_{n_j}}
		\end{\equ}
		where $\al_1,\al_2,\bt_1,\bt_2,\ga_1,\dots,\ga_{n_j}$ are non-negative integers 
		with $\al_1+\al_2+\bt_1+\bt_2+\sum_{i=1}^{n_j}i\ga_i=j$ and $\ga_0\geq0$. And it is easy to see that, for each $j\in\{k_l+1,\dots,l+1\}$, there exists at most one element in $\{\al_1,\al_2,\bt_1,\bt_2,n_j\}$ that is greater than $\frac{l+1}2$. And we remark here that, when $l$ is suitably large (say at least $l\geq(m+4)/2$), there holds $\frac{l+1}2\leq k_l$.
		
		If $\max\{\al_1,\al_2\}>k_l$, then we can assume without loss of generality that $\al_1>k_l$ (since the conformal Laplace operator $L_\ig$ is self-adjoint, we can always interchange the places of $\al_1$ and $\al_2$). And in this case, by using H\"older's inequality, we find that the function in \eqref{pa-j-cq-2} has the same integrability  as $\pa_t^{\al_1}v$ since the remaining factors are all $L^\infty$-bounded. In particular, we have
		\[
		\int_0^T\Big|\int_M (\pa_t^{\al_1}v)L_\ig(\pa_t^{\al_2}v)d\vol_{\ig}\Big|^{q_{\al_1}}dt
		\leq C(q_{\al_1}, \|v\|_{C^{\al_2+2}(M\times[0,T])})\|\pa_t^{\al_1}v\|_{L^{q_{\al_1}}(M\times[0,T])}^{q_{\al_1}}
		\]
		where 
		\[
		q_{\al_1}=\begin{cases}
			\frac{m+2}2(\frac{m-2}4-l+\al_1)^{-1} & \text{if } \frac{m-2}4-l+\al_1>0, \\
			\text{any finite number} & \text{if } \frac{m-2}4-l+\al_1=0,
		\end{cases}
		\]
		and the coefficient $C(q_{\al_1}, \|v\|_{C^{\al_2+2}(M\times[0,T])})>0$ depends only on $q_{\al_1}$ and $\|v\|_{C^{\al_2+2}(M\times[0,T])}$. Notice that $\al_1\geq k_l+1$ and $\al_1+\al_2\leq j\leq l+1$, we can deduce $\al_2\leq [\frac{m+2}4]$. Since we have assumed $l\geq (m+6)/2$, we find that $k_l\geq [\frac{m+2}4]+2$. This implies $\al_2+2\leq k_l$ and, therefore, $\|v\|_{C^{\al_2+2}(M\times[0,T])}<+\infty$. 
		
		If $n_j>k_l$, then we get the corresponding exponent $\ga_{n_j}=1$. And similar to the previous case, we once again conclude that the function in \eqref{pa-j-cq-2} has the same integrability  as $\pa_t^{n_j}v$ since the remaining factors are all $L^\infty$-bounded. It remains to consider the case $\max\{\bt_1,\bt_2\}>k_l$, and we can also assume without loss of generality that $\bt_1>k_l$. In order to study the integrability of $\pa_t^{\bt_1}\psi_v$, let us go back to \eqref{Phi} and consider the identity \eqref{Ck-perturbed-identity} because this is where $\psi_v$ and its derivatives originate. Clearly, the derivatives of $\lm_v$ will be also involved as  was pointed out in Lemma \ref{lem:Ck-perturbed-eigenvalue}.
		
		Suppose that the time-derivatives of $\lm_v$ and $\psi_v$ exist up to order $s\in\N\cup\{0\}$ and, in particular, there holds
		\begin{\equ}\label{s-th derivative}
			\lm_v^{(s)}\in L^2([0,T]) \quad \text{and} \quad \pa_t^{s}\psi_v\in L^2([0,T], H^{2(l+1)-2s+1}(M,\mbs(M)))
		\end{\equ}
		where $\lm_v^{(s)}$ stands for the $s$-th derivative of $\lm_v$, and $\lm_v^{(0)}=\lm_v$ To consider the $(s+1)$-th derivatives, we need to look at the mapping  
		\[
		t\mapsto \big( \nabla_{(\lm,\psi)}\Phi(v(\cdot,t),\lm_v(t),\psi_v(t)) \big)^{-1}[\ka_{s+1}(t),\phi_{s+1}(t)],
		\]
		where $\ka_{s+1}$ is a summation of functions in the form of 
		\[
		\real\int_M (v^{\mu_0})(\pa_tv)^{\mu_1}\cdots(\pa_t^{r_{s+1}})^{\mu_{r_{s+1}}}\big( \pa_t^{\nu_1}\psi_v,\pa_t^{\nu_2}\psi_v \big)_\ig d\vol_{\ig}
		\]
		with $\nu_1+\nu_2+\sum_{i=1}^{r_{s+1}}i\mu_i=s+1$, $\max\{\nu_1,\nu_2\}\leq s$ and $\mu_0\in\R$, and $\phi_{s+1}$ is a summation of spinors in the form of
		\[
		\lm_v^{(\sa)}\cdot(v^{\rho_0})(\pa_tv)^{\rho_1}\cdots(\pa_t^{d_{s+1}})^{\rho_{d_{s+1}}}\pa_t^\om\psi_v
		\]
		and 
		\[
		(v^{\chi_0})(\pa_tv)^{\chi_1}\cdots(\pa_t^{b_{s+1}}v)^{\chi_{b_{s+1}}}D_\ig \pa_t^{\theta}\psi_v
		\]
		with $\sa+\om+\sum_{i=1}^{d_{s+1}}i\rho_i=s+1$, $\max\{\sa,\om\}\leq s$, $\theta+\sum_{i=1}^{b_{s+1}}i\chi_i=s+1$, $\theta\leq s$ and $\rho_0,\chi_0\in\R$. Then, once \eqref{s-th derivative} is satisfied, we can deduce that $\lm_v^{(s+1)}$ and $\pa_t^{s+1}\psi_v$ exist and satisfy
		\[
		\lm_v^{(s+1)}=-(\phi_{s+1},\psi_v)_2^{v}\in L^2([0,T]) 
		\]
		and
		\[
		\aligned
		\pa_t^{s+1}\psi_v&=\frac{\ka_{s+1}}2\psi_v+\big( v^{-\frac2{m-2}}D_\ig-\lm_v \big)^{-1}\circ(I-P_{\lm_v})\phi_{s+1} \\[0.3em]
		&\in L^2([0,T], H^{2(l+1)-2(s+1)+1}(M,\mbs(M))).
		\endaligned
		\]
		Notice that, by using the elliptic regularity of the Dirac operator, \eqref{s-th derivative} is valid for $s=0$. Therefore, by an induction argument, the above analysis indicates that \eqref{s-th derivative} is valid for all $s=0,1,\dots, l+1$.

		Now, let us turn back to \eqref{pa-j-cq-2}. By using H\"older's inequality, an immediate consequence of  the above arguments is that $\pa_t^j\cq_2[v]\in L^2([0,T],L^2(M))\simeq L^2(M\times[0,T])$ and the map $v\mapsto \pa_t^j\cq_2[v]$ belongs to $C(U,L^2(M\times[0,T]))$ for $j=0,1,\dots,l+1$. We also notice that, the space or  mixed space-time derivatives $\pa_t^j\nabla^k \cq_2[v]$ with $2j+k\leq 2(l+1)$ can be treated similarly, by observing that the function $\pa_t^r\nabla^p v$ has the same integrability of $\pa_t^{r+p/2}v$ from the view point of the embeddings in Proposition \ref{Parabolic-Sobolev-Embeddings}. 
		%
		This will suggest that $\cq_2\in C(U, P^{l+1}(T))$ as was desired.
		
		To see that the Fr\'echet derivative $D\cq_2$ exits and belongs to $C(U,\msl(P^{l+1}(T),P^{l+1}(T)))$, we shall first look at the Gateaux derivative 
		\[
		(v,w)\mapsto d\cq_2(v,w):=\frac{d}{d\tau}\cq_2[v+\tau w]\Big|_{\tau=0}
		\]
		for $v\in U$ and $w\in P^{l+1}(T)$. A direct computation, together with Lemma \ref{lem:Ck-perturbed-eigenvalue}, shows that
		\begin{\equ}\label{Dcq2}
		\aligned
		d\cq_2(v,w)&=2\Big(\int_M  v L_\ig w\, d\vol_\ig\Big)|\psi_v|_\ig^2v^{-\frac m{m-2}}+2\Big(\int_M  v L_\ig v \, d\vol_\ig\Big)\real(\psi_v,\phi_{v,w})_\ig v^{-\frac m{m-2}} \\
		&\qquad -\frac m{m-2}\Big(\int_M  v L_\ig v \, d\vol_\ig\Big)|\psi_v|_\ig^2v^{-\frac {2m-2}{m-2}}w
		\endaligned
		\end{\equ}
		where
		\[
		\phi_{v,w}(t)=\frac{\ka_{v,w}(t)}2\psi_v(t)-\frac{2\lm_v(t)}{m-2}\big( v(\cdot,t)^{-\frac2{m-2}}D_\ig-\lm_v(t) \big)^{-1}\circ (I-P_{\lm_v(t)})\big( v(\cdot,t)^{-1}w(\cdot,t)\psi_v(t) \big)
		\]
		with
		\[
		\ka_{v,w}(t)=-\frac2{m-2}\int_Mv(\cdot,t)^{\frac{4-m}{m-2}}w(\cdot,t)|\psi_v(t)|_\ig^2d\vol_\ig.
		\]
		Then it is easy to see that $d\cq_2(v,w)$ is linear in $w$. And it is clear, since $w\in P^{l+1}(T)$, that the same estimates used to show the continuity of $\cq_2$ can be repeated here to prove $d\cq_2(v,\cdot)\in \msl(P^{l+1}(T),P^{l+1}(T))$. Moreover, an analogous reasoning also applies to the continuity of $v\mapsto d\cq_2(v,\cdot)$. Therefore, the Fr\'echet derivative of $\cq_2$ exits with $D\cq_2[v]=d\cq_2(v,\cdot)$, for all  $v\in U$, which completes the proof.
	\end{proof}
	
By computing the Fr\'echet derivative of $\msf$ at $v\in U\subset P_{+,s}^{l+1}(T)$, we find that
\[
D\msf[v](w)=\frac{d}{d\tau}\msf[v+\tau w]\Big|_{\tau=0}=\big(w_0,  \pa_tw - c_mv^{-\frac4{m-2}}\De_\ig w + \cl_v[w] \big)
\]
for $w\in P^{l+1}(T)$, where $\cl_v \in\msl(P^{l+1}(T),P^{l+1}(T))$ is given by 
\begin{\equ}\label{cl-v}
\aligned
\cl_{v}[w] &= \frac{4c_m}{m-2}v^{-\frac{m+2}{m-2}}w\De_{\ig} v-\frac{m-6}{m-2}\scal_{\ig}v^{-\frac{4}{m-2}}w \\
&\qquad +2\vartheta\cdot\Big(\int_M w L_{\ig}v \,d\vol_{\ig}\Big)|\psi_{v}|_{\ig}^2 v^{-\frac m{m-2}} \\
&\qquad +2\vartheta\cdot\Big(\int_M v L_{\ig}v \,d\vol_{\ig}\Big)\real(\psi_{v},\phi_{v,w})_{\ig} v^{-\frac m{m-2}} \\
&\qquad -\frac{m}{m-2}\vartheta\cdot\Big(\int_M vL_{\ig}v \,d\vol_{\ig}\Big)|\psi_{v}|_{\ig}^2 v^{-\frac {2m-2}{m-2}} w
\endaligned
\end{\equ}
with
\[
\aligned
\phi_{v,w}(t)&=\frac{\ka_{v,w}(t)}2\psi_{v}(t) \\
&\qquad -\frac{2\lm_{v}(t)}{m-2}\big( v(\cdot,t)^{-\frac2{m-2}}D_\ig-\lm_{v}(t) \big)^{-1}\circ(I-P_{\lm_{v}(t)})\big( v(\cdot,t)^{-1} w(\cdot,t)\psi_{v}(t) \big)
\endaligned
\]
and
\[
\ka_{v,w}(t)=-\frac2{m-2}\int_M v(\cdot,t)^{\frac{4-m}{m-2}}w(\cdot,t)|\psi_{v}(t)|_\ig^2d\vol_\ig.
\]
The following result is a direct consequence of Lemma \ref{lem:msf-C1}.

\begin{Cor}\label{lem:A1-A3}
If $\msf$ is $C^1$ on $U\subset P_{+,s}^{l+1}(T)$, for some $l\in\N$, and $v\in U\cap C^\infty(M\times[0,T])$, then $\cl_v$ satisfies 
\begin{itemize}
	\item[$(1)$] $|\cl_v[w](\cdot,t)|_2\leq C \|w(\cdot,t)\|_{H^1(M)}$ for some constant $C>0$;
	
	\item[$(2)$] $\cl_v[\al w](\cdot,t)=\al(t)\cl_v[w](\cdot,t)$ for any smooth function $\al:[0,T]\to\R$;
	
	\item[$(3)$] $\cl_v\in\msl(P^j(T),P^j(T))$ for all $j\in\N$.
\end{itemize} 
\end{Cor}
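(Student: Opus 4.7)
The approach is to verify the three conditions directly from the explicit formula \eqref{cl-v}, exploiting that $v\in C^\infty(M\times[0,T])$ is uniformly positive and bounded so that all coefficient functions appearing in $\cl_v$ (namely $v^{-\al}$, $\scal_\ig$, $\De_\ig v$, $L_\ig v$, and the cutoff $\vartheta$) are smooth with $C^k$-norms bounded on $M\times[0,T]$ for every $k$. Combined with Lemma \ref{lem:Ck-perturbed-eigenvalue} and elliptic regularity of $D_\ig$, the eigenpair $(\lm_v,\psi_v)$ is likewise smooth in $(x,t)$; in particular, the resolvent-type operator $\msr_v(t):=\bigl(v(\cdot,t)^{-2/(m-2)}D_\ig-\lm_v(t)\bigr)^{-1}\circ(I-P_{\lm_v(t)})$ satisfies the uniform $L^2\to L^2$ bound \eqref{inverse-bdd}.

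For (1), I split $\cl_v[w]$ into its pointwise multiplicative summands (the first, second and last terms of \eqref{cl-v}) and its two nonlocal summands. The multiplicative terms are of the form $h(x,t)w(x,t)$ with $h$ uniformly bounded on $M\times[0,T]$, so each contributes at most $C|w(\cdot,t)|_2\le C\|w(\cdot,t)\|_{H^1(M)}$. For the third summand, integration by parts yields $|\int_M wL_\ig v\,d\vol_\ig|\le C\|w(\cdot,t)\|_{H^1(M)}$, and $|\psi_v|^2 v^{-m/(m-2)}$ is $L^\infty$-bounded. For the fourth, Cauchy--Schwarz gives $|\ka_{v,w}(t)|\le C|w(\cdot,t)|_2$, while \eqref{inverse-bdd} applied to the $L^2$-function $v^{-1}w\psi_v$ controls the resolvent contribution; pairing the resulting $L^2$-bound for $\phi_{v,w}(\cdot,t)$ with the $L^\infty$ factors closes (1). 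Property (2) is then immediate by inspection: every summand of $\cl_v[w]$ depends on $w$ through either pointwise multiplication, the scalar integral $\int_M wL_\ig v\,d\vol_\ig$, or the linear operator $w\mapsto\phi_{v,w}$, each of which commutes with multiplication by a function $\al(t)$ of time alone, so $\al(t)$ factors out of every term.

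Property (3) is the principal technical step. For the multiplicative summands, Leibniz's rule together with smoothness of the coefficients gives $\|hw\|_{P^j(T)}\le C\|w\|_{P^j(T)}$. For the third term, writing $F_w(t):=\int_M w L_\ig v\,d\vol_\ig$, the bound $|\pa_t^a F_w(t)|\le C\sum_{b\le a}\|\pa_t^{a-b}w(\cdot,t)\|_{L^2(M)}$ together with Leibniz in time yields $\||\psi_v|^2v^{-m/(m-2)}F_w\|_{P^j(T)}\le C\|w\|_{P^j(T)}$. The main obstacle is the $\phi_{v,w}$-term, in which time derivatives must be commuted past $\msr_v$. The plan is to differentiate the defining identity $(v^{-2/(m-2)}D_\ig-\lm_v)\msr_v=I-P_{\lm_v}$ in $t$ to obtain inductively a formula expressing $\pa_t^s\msr_v[v^{-1}w\psi_v]$ as $\msr_v$ applied to a sum of smooth-coefficient linear expressions in $\pa_t^{\le s}w$ and in time derivatives of $v,\lm_v,\psi_v$ already controlled through \eqref{s-th derivative}. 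The uniform bound \eqref{inverse-bdd}, combined with the elliptic gain $H^k\to H^{k+1}$ of $\msr_v$ (with constants depending only on $\|v\|_{C^{k+1}(M\times[0,T])}$), then produces $\pa_t^s\msr_v[\,\cdot\,]\in L^2([0,T],H^{2j-2s+1}(M))$ with norm $\le C\|w\|_{P^j(T)}$. Summing over $s$ with $2s+k\le 2j$ proves $\cl_v\in\msl(P^j(T),P^j(T))$ for every $j\in\N$; the remaining bookkeeping mirrors the estimates already carried out in the proof of Lemma \ref{lem:msf-C1}.
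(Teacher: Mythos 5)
Your proposal is correct and follows essentially the same route as the paper: property (1) via integration by parts on $\int_M wL_\ig v\,d\vol_\ig$ and the uniform resolvent bound \eqref{inverse-bdd} for $\phi_{v,w}$, property (2) by observing that every summand acts on $w$ fiberwise in time, and property (3) by re-running the derivative bookkeeping from the proof of Lemma \ref{lem:msf-C1}. The extra detail you supply for (3) is exactly the content the paper delegates to that lemma, so there is no substantive difference.
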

\begin{proof}
Clearly, the validity of $(2)$ follows directly from the linearity of $\cl_v$. And, by repeating the arguments of Lemma \ref{lem:msf-C1}, we can easily check $(3)$ is satisfied as $v$ is smooth. So we only need to confirm that $\cl_v$ satisfies $(1)$.

In fact, using the smoothness and positiveness of $v$ on $M\times[0,T]$, the $L^2$-norm of $\cl_v[w](\cdot,t)$ can be estimated via \eqref{cl-v} as 
\[
\aligned
|\cl_v[w](\cdot,t)|_2&\leq C \Big( |w(\cdot,t)|_2 + \Big|\int_M w L_{\ig}v \,d\vol_{\ig}\Big| +|\phi_{v,w}(t)|_2 \Big)  \\
&\leq C\big(|w(\cdot,t)|_2 + \|w(\cdot,t)\|_{H^1(M)}+|\phi_{v,w}(t)|_2\big)
\endaligned
\]
where we have used 
\[
\Big|\int_M w L_{\ig}v \,d\vol_{\ig}\Big| =\Big|\int_M \big( c_m \nabla_\ig w\cdot\nabla_\ig v + \scal_{\ig}vw \big) d\vol_{\ig}\Big| \leq C\|w(\cdot,t)\|_{H^1(M)}
\]
with $C>0$ independent of $w$ and $t$.
Notice that, by \eqref{inverse-bdd}, we have $|\phi_{v,w}(t)|_2\leq C|w(\cdot,t)|_2$. Therefore, we obtain $(1)$ as was asserted. 
\end{proof}
	
%

\end{document}